\documentclass{amsart}
\textwidth13.3truecm

\usepackage{amssymb,amsmath, amscd, mathrsfs, graphics,hyperref}

\newtheorem{thm}{Theorem}[section]
\newtheorem{lemma}[thm]{Lemma}
\newtheorem{prop}[thm]{Proposition}
\newtheorem{cor}[thm]{Corollary}

\theoremstyle{definition}
\newtheorem{remark}[thm]{Remark}
\newtheorem{ex}[thm]{Example}
\newtheorem{defn}[thm]{Definition}

\title[Noncommutative hyperbolicity]{Noncommutative 
hyperbolic metrics}

\author{Serban T. Belinschi}
\address{CNRS - Institut de Math\'ematiques de Toulouse}
\thanks{This work was initiated, and largely completed, during the 2016 Thematic Semester in Analysis, 
funded by ANR-11-LABX-0040-CIMI within the program ANR-11-IDEX-0002-02.}

\author{Victor Vinnikov}
\address{Ben Gurion University of the Negev}

\begin{document}

\begin{abstract}
We characterize certain noncommutative domains in terms of 
noncommutative holomorphic equivalence via a pseudometric
that we define in purely algebraic terms. We prove some 
properties of this pseudometric and provide an 
application to free probability.

\end{abstract}

\maketitle

\section{Introduction}

A noncommutative function is a function, defined on a domain in the disjoint union
of square matrices of all sizes over a vector space, that satisfies natural
compatibility conditions: it respects direct sums and similarities.
Noncommutative function theory is the free analogue of classical function theory,
much like operator space theory \cite{ER,Paulsen} is the free analogue
of classical Banach space theory.
Noncommutative functions were first introduced by Taylor \cite{taylor0,taylor}
in his monumental work on noncommutative spectral theory.
Their theory was further developed by Voiculescu \cite{V1,VFAQ2},
and, in a systematic fashion, by Kalyuzhnyi-Verbovetskyi and the second author \cite{ncfound}.
We mention also
the work of Helton, Klep, and McCullough (see, e.g., \cite{HKM1,HKM}), of Popescu (see, e.g., \cite{Po06,Po10}), 
of Muhly and Solel (see, e.g., \cite{MS}), and of Agler and McCarthy (see, e.g., \cite{AMc}).
Noncommutative functions appear naturally in a large variety of settings: noncommutative algebra,
systems and control, spectral theory, and free probability.
They possess very strong regularity properties (reminiscent
of the regularity properties of usual analytic functions) and admit a good difference-differential calculus.

At least some analytic aspects of noncommutative function theory are now quite well understood
(see, e.g., \cite{Pa,Sha,BMV1} and the references therein for some recent developments).
The purpose of this paper is to move in the direction of noncommutative geometric function theory
by introducing a free analogue of Kobayashi (pseudo)metric for general noncommutative domains.

We review the basics of noncommutative functions, noncommutative kernels, and topologies
for noncommutative sets in Section \ref{sec:two}. 
Following that, we introduce in Section \ref{pseudodistance} a ``noncommutative length function'' $\delta_{\mathcal D}$
for a noncommutative set ${\mathcal D}$ (satisfying some mild assumptions);
$\delta_{\mathcal D}$ reflects in a natural way the ``noncommutative geometry'' of ${\mathcal D}$.
It follows immediately that any noncommutative function between two noncommutative sets
is contractive with respect to the corresponding noncommutative length functions.
We show that the noncommutative length function $\delta_{\mathcal D}$
is nondegenerate if and only if there are no nonconstant
noncommutative functions from the whole noncommutative space over ${\mathbb C}$ 
to the multiples of a given matrix level $k$ in ${\mathcal D}$, for all $k \geq 1$.
We use the noncommutative length function to define a pseudodistance on ${\mathcal D}$ in several ways;
we mimick the definition of the Kobayashi pseudodistance using either the Lempert function
or the infinitesimal Kobayashi pseudometric, and possibly allowing to go to higher matrix levels. 

Our definition of the noncommutative length function is motivated by \cite{JLMS};
it would be interesting to clarify possible connections with \cite{A}. 

In Section \ref{sec:four}, we consider the large class of noncommutative sets that are defined by noncommutative kernels.
These can be thought of as analogues of the sets $\{\rho(z,\bar z)>0\}$ for some (global) defining function $\rho$
in several complex variables. In this case it is possible to obtain explicit formulae for the noncommutative
length function by applying the noncommutative difference-differential operators to the kernel.
This applies in particular to generalized noncommutative balls (or halfplanes) defined by a noncommutative
function that were investigated in \cite{AMc1,AMc3,AMc4,BMV1} as suitable domains for interpolation problems.
For these, the noncommutative distance turns out to be dominated by the usual Kobayashi distance on every matrix level,
with equality for noncommutative balls (or halfplanes) over a $C^*$ algebra.

In Section \ref{sec:five}, we seek to apply the noncommutative metric to noncommutative function theory.
We show that under some assumptions, isometries with respect the noncommutative length function are exactly
bijective noncommutative mappings,
and a noncommutative mapping from a noncommutative set to a subset that is at a positive distance from the boundary
of the set has a unique attracting fixed point. 

Finally, in Section \ref{sec:six}, we use the noncommutative metric on the noncommutative upper half plane over
a finite von Neumann algebra to solve the functional equation
$\omega(b) = b + h(\omega(b))$ for $h$ a noncommutative self map of the noncommutative upper halfplane to itself
satisfying a rather mild vanishing condition at infinity. 
This solves the problem of defining free convolution
powers of distributions of unbounded selfadjoint random variables in free probability.

\section{Noncommutative domains, functions and kernels}
\label{sec:two}

\subsection{Noncommutative functions}
Noncommutative functions originate in Joseph L. Taylor's work 
\cite{taylor0,taylor} on spectral theory and functional calculus
for $k$-tuples of non-commuting operators.
We largely follow \cite{ncfound} in our presentation  
of noncommutative sets and functions.  We refer to \cite{ncfound}
for details on, and proofs of, the statements below.

Let us introduce the following notation: if $S$ is 
a nonempty set, we denote by $S^{m\times n}$ the 
set of all matrices with $m$ rows and $n$ columns 
having entries from $S$. If $S=\mathbb F$ is a field,
then we use the standard notation $GL_n(\mathbb F)$ for
the group of matrices $X$ in $\mathbb F^{n\times n}$ which 
are invertible (that is, there exists $X^{-1}\in\mathbb F^{n\times n}$
such that $XX^{-1}=X^{-1}X=I_n$, where $I_n$ is the diagonal
matrix having the multiplicative unit of $\mathbb F$ on the
diagonal and zero elsewhere). We will work almost exclusively 
with subsets of operator spaces and operator systems (linear subspaces of the 
algebra $B(\mathcal H)$ of bounded operators over a Hilbert
space $\mathcal H$ -- which we assume to be separable -- 
which contain the unit $1$ of $B(\mathcal H)$, are norm-closed and selfadjoint - see \cite{ER}). However 
some of our definitions hold in much broader generality.
Given a complex vector space $\mathcal V$, 
a {\em noncommutative set} is a family 
$\Omega_{\rm nc}:=(\Omega_n)_{n\in\mathbb N}$ such that
\begin{enumerate}
\item[(a)] for each $n\in\mathbb N$, $\Omega_n\subseteq\mathcal
V^{n\times n};$
\item[(b)] for each $m,n\in\mathbb N$, we have $\Omega_m\oplus
\Omega_n\subseteq\Omega_{m+n}$.
\end{enumerate}
The noncommutative set $\Omega_{\rm nc}$ is called {\em right 
admissible} if in addition the condition (c) below is satisfied:
\begin{enumerate}
\item[(c)] for each $m,n\in\mathbb N$ and $a\in\Omega_m,b\in
\Omega_n,w\in\mathcal V^{m\times n}$, there is an $\epsilon>0$
such that $\begin{bmatrix}
a & zw\\
0 & b\end{bmatrix}\in\Omega_{m+n}$ for all $z\in\mathbb C,
|z|<\epsilon$.
\end{enumerate}
Left admissible sets are defined similarly, except that $zw$ 
appears in the lower left corner of the matrix.

Given complex vector spaces $\mathcal{V,W}$ and a noncommutative set 
$\Omega_{\rm nc}\subseteq\coprod_{n=1}^\infty\mathcal V^{n\times n}$, a 
{\em noncommutative function} is a family $f:=(f_n)_{n\in\mathbb N}$
such that $f_n\colon\Omega_n\to\mathcal W^{n\times n}$ and
\begin{enumerate}
\item $f_m(a)\oplus f_n(b)=f_{m+n}(a\oplus b)$ for all
$m,n\in\mathbb N$, $a\in\Omega_m,b\in\Omega_n$;
\item for all $n\in\mathbb N$, $f_n(T^{-1}aT)=T^{-1}f_n(a)T$ whenever
$a\in\Omega_n$ and $T\in GL_n(\mathbb C)$ are such that $T^{-1}aT$
belongs to the domain of definition of $f_n$.
\end{enumerate}
These two conditions are equivalent to the requirement that $f$ respects
intertwinings by scalar matrices:
\begin{enumerate}
\item[(I)] For all $m,n\in\mathbb N$, $a\in\Omega_m,b\in\Omega_n$, 
$S\in\mathbb C^{m\times n}$, we have
\begin{equation}\label{inter}
aS=Sb\implies f_m(a)S=Sf_n(b).
\end{equation}
\end{enumerate}

If $\mathcal{V,W}$ are operator spaces,
it is shown in \cite[Theorem 7.2]{ncfound}) that, under very 
mild openness conditions on $\Omega_{\rm nc}$, local boundedness for 
$f$ implies each $f_n$ is analytic as a map between Banach spaces. More
specifically, if $\Omega_{\rm nc}$ is finitely open (that is, for all $n\in
\mathbb N$, the intersection of $\Omega_n$ with any finite 
dimensional complex subspace is open) and $f$ is locally 
bounded on slices (that is, for every 
$n\in\mathbb N$, for every $a\in\Omega_n$ and 
$b\in\mathcal V^{n\times n}$, there exists an 
$\varepsilon>0$ such that the set $\{f_n(a+zb)\colon
z\in\mathbb C,|z|<\varepsilon\}$ is bounded in $\mathcal W^{n\times n}$),
then each $f_n$ is G\^{a}teaux complex differentiable on $\Omega_n$
(see Section \ref{top} below). 
Indeed, this is a consequence of the following 
essential property of noncommutative functions: if $\Omega_{\rm nc}$ is 
admissible, $a\in \Omega_n, c\in\Omega_m, b\in\mathcal V^{n\times m}$ 
such that $\begin{bmatrix}
a & b \\
0 & c
\end{bmatrix}\in\Omega_{n+m}$, then there exists a linear map
$\Delta f_{n,m}(a,c)\colon\mathcal V^{n\times m}\to
\mathcal W^{n\times m}$ such that 
\begin{equation}\label{FDQ}
f_{n+m}\left(\begin{bmatrix}
a & b \\
0 & c
\end{bmatrix}\right)=\begin{bmatrix}
f_n(a) & \Delta f_{n,m}(a,c)(b) \\
0 & f_m(c)
\end{bmatrix}.
\end{equation}
This implies in particular that $f_{n+m}$ extends to
the set of all elements $\begin{bmatrix}
a & b \\
0 & c
\end{bmatrix}$ such that $a\in \Omega_n, c\in\Omega_m, 
b\in\mathcal V^{n\times m}$ (see \cite[Section 2.2]{ncfound}).
Two properties of this operator that are important for us are
\begin{equation}\label{FDC}
\Delta f_{n,n}(a,c)(a-c)=f(a)-f(c)=\Delta f_{n,n}(c,a)(a-c),\quad
 \Delta f_{n,n}(a,a)(b)=f_n'(a)(b),
\end{equation}
the derivative of $f_n$ in $a$ aplied to the element
$b\in\mathcal V^{n\times m}$. Moreover, $\Delta f(a,c)$ 
as functions of $a$ and $c$, respectively, satisfy 
properties similar to the ones described in items 
(1), (2) above -- see \cite[Sections 2.3--2.5]{ncfound}
for details (for convenience, from now on we shall suppress the 
indices denoting the level for noncommutative functions, as 
it will almost always be obvious from the context). 

\begin{ex}\label{example21}
There are many examples of noncommutative functions. We provide here three.
\begin{enumerate}
\item The best known is provided by the classical 
theory of analytic functions of one complex variable: if $D$ is a simply connected domain in $\mathbb C$ 
and $f\colon D\to\mathbb C$ is analytic, then $f$ is the first level of an nc map
$f\colon\coprod_{n=1}^\infty\{A\in\mathbb C^{n\times n}\colon\sigma(A)\subset D\}\to
\coprod_{n=1}^\infty\mathbb C^{n\times n}$ given by the classical analytic functional calculus:
$f_n(A)=(2\pi i)^{-1}\int_\gamma(A-\zeta I_n)^{-1}f(\zeta)\,{\rm d}\zeta$, for some simple closed
curve $\gamma$ which surrounds once counterclockwise the spectrum $\sigma(A)$ of $A$.

\item If $P(X_1,\dots,X_k)$ is a polynomial in $k$ non-commuting indeterminates $X_1,\dots,X_k$
and $\mathcal A$ is a $C^*$-algebra, then the evaluation $P(a_1,\dots,a_k)$, 
$a_j\in\mathcal A^{n\times n}$, $n\in\mathbb N$, is an nc function. More
generally, this can be extended to power series $P$ with (finite or infinite)
radius of convergence (see, for instance, \cite{Popescu0}).

\item If $\mathcal A$ is a unital $C^*$-algebra and $B\subseteq\mathcal A$ is an inclusion of 
$C^*$-algebras which share the same unit, assume that $E\colon\mathcal A\to B$ is a unit-preserving
conditional expectation. If $X=X^*\in\mathcal A$, then the map 
$G_X$ defined by $G_{X,n}(b)=({\rm Id}_{\mathbb C^{n\times n}}\otimes E)
\left[(b-I_n\otimes X)^{-1}\right],$ $b\in B^{n\times n}$, is an nc function (see \cite{V2,V1}). Its domain
is the set of all $b$ such that $b-I_n\otimes X$ is invertible. The {\em noncommutative upper 
half-plane} $\coprod_{n=1}^\infty\{b\in B^{n\times n}\colon(b-b^*)/2i>0\}$ is a natural nc 
subdomain on which $G_X$ is defined.

\end{enumerate}

\end{ex}

\subsection{Noncommutative kernels}

This section follows mostly \cite{BMV}. Let $\Omega_{\rm nc}$
be a noncommutative subset of the operator space $\mathcal V$.
Consider two other operator spaces $\mathcal V_0$ and $\mathcal V_1$.
Denote by $\mathcal L(\mathcal V_0,\mathcal V_1)$ the space of
linear operators from $\mathcal V_0$ to $\mathcal V_1$. A {\em
global kernel} on $\Omega_{\rm nc}$ is a function
$K\colon\Omega_{\rm nc}\times\Omega_{\rm nc}\to\mathcal L(\mathcal V_0,\mathcal V_1)_{\rm nc}$
such that
\begin{eqnarray}
& & a\in\Omega_m,c\in\Omega_n\implies K(a,c)\in\mathcal L(\mathcal V_0^{m\times n},\mathcal V_1^{m\times n})\label{quatro}\\
& & K\left(\begin{bmatrix}
a & 0\\
0 & \tilde{a}
\end{bmatrix},\begin{bmatrix}
c & 0\\
0 & \tilde{c}
\end{bmatrix}\right)\left(\begin{bmatrix}
P_{1,1} & P_{1,2}\\
P_{2,1} & P_{2,2}
\end{bmatrix}\right)=\begin{bmatrix}
K(a,c)(P_{1,1}) & K(a,\tilde{c})(P_{1,2})\\
K(\tilde{a},c)(P_{2,1}) & K(\tilde{a},\tilde{c})(P_{2,2})
\end{bmatrix},\label{cinco}
\end{eqnarray}
for any $m,\tilde{m},n,\tilde{n}\in\mathbb N$, $a\in\Omega_m,\tilde{a}\in\Omega_{\tilde{m}},
c\in\Omega_n,\tilde{c}\in\Omega_{\tilde{n}},$ $P_{1,1}\in\mathcal V_0^{m\times n},
P_{1,2}\in\mathcal V_0^{m\times\tilde{n}},P_{2,1}\in\mathcal V_0^{\tilde{m}\times n},
P_{2,2}\in\mathcal V_0^{\tilde{m}\times\tilde{n}}$ (that is, $\begin{bmatrix}
P_{1,1} & P_{1,2}\\
P_{2,1} & P_{2,2}
\end{bmatrix}\in\mathcal V_0^{(m+\tilde{m})\times(n+\tilde{n})}$).
Obviously, condition \eqref{cinco} can be extended to evaluations of $K$ in diagonal matrices with
arbitrarily many blocks on the diagonal. The kernel $K$ is called an
{\em affine noncommutative kernel} if in addition to condition
\eqref{quatro}, it respects intertwinings:
\begin{eqnarray}
& & a\in\Omega_m,\tilde{a}\in\Omega_{\tilde{m}},S\in\mathbb C^{\tilde{m}\times m}\text{ are such that }
Sa=\tilde{a}S,\nonumber\\
& & c\in\Omega_n,\tilde{c}\in\Omega_{\tilde{n}},T\in\mathbb C^{n\times\tilde{n}}\text{ are such that }
cT=T\tilde{c},\nonumber\\
& & P\in\mathcal V_0^{m\times n}\implies SK(a,c)(P)T=K(\tilde{a},\tilde{c})(SPT).\label{sei}
\end{eqnarray}
Conditions \eqref{quatro} and \eqref{sei} are equivalent to 
conditions \eqref{quatro}, \eqref{cinco} and
\begin{eqnarray}
& & a,\tilde{a}\in\Omega_m,S\in GL_m(\mathbb C)\text{ are such that }SaS^{-1}=\tilde{a},\nonumber\\
& & c,\tilde{c}\in\Omega_n,T\in GL_n(\mathbb C)\text{ are such that }T^{-1}cT=\tilde{c},\nonumber\\
& & P\in\mathcal V_0^{m\times n}\implies K(\tilde{a},\tilde{c})(P)=SK(a,c)(S^{-1}PT^{-1})T.\label{sette}
\end{eqnarray}
If $f\colon\Omega_{\rm nc}\to\mathcal W_{\rm nc}$ is a noncommutative 
map, then $\Omega_{\rm nc}\times\Omega_{\rm nc}\ni(a,c)\mapsto
\Delta f(a,c)\in\mathcal L(\mathcal V,\mathcal W)_{\rm nc}$ satisfies
the above conditions (see \cite[Proposition 2.15]{ncfound}).

We call $K$ a {\em noncommutative (nc) kernel} if $K$ satisfies 
\eqref{quatro}  and respects intertwinings in the following sense:
\begin{eqnarray}
& & a\in\Omega_m,\tilde{a}\in\Omega_{\tilde{m}},S\in\mathbb C^{\tilde{m}\times m}\text{ are such that }
Sa=\tilde{a}S,\nonumber\\
& & c\in\Omega_n,\tilde{c}\in\Omega_{\tilde{n}},T\in\mathbb C^{\tilde{n}\times n}\text{ are such that }
Tc=\tilde{c}T,\nonumber\\
& & P\in\mathcal V_0^{m\times n}\implies SK(a,c)(P)T^*=K(\tilde{a},\tilde{c})(SPT^*).\label{otto}
\end{eqnarray}
Conditions \eqref{quatro} and \eqref{otto} are equivalent to 
conditions \eqref{quatro}, \eqref{cinco} and
\begin{eqnarray}
& & a,\tilde{a}\in\Omega_m,S\in GL_m(\mathbb C)\text{ are such that }SaS^{-1}=\tilde{a},\nonumber\\
& & c,\tilde{c}\in\Omega_n,T\in GL_n(\mathbb C)\text{ are such that }TcT^{-1}=\tilde{c},\nonumber\\
& & P\in\mathcal V_0^{m\times n}\implies K(\tilde{a},\tilde{c})(P)=SK(a,c)(S^{-1}P(T^{-1})^*)T^*.\label{nove}
\end{eqnarray}
Observe that if $K$ is an affine nc kernel, 
then $(a,c)\mapsto K(a,c^*)$ is an nc kernel.

We say that a noncommutative kernel $K$ is a {\em completely
positive noncommutative (cp nc) kernel} if in addition
\begin{equation}\label{dieci}
a\in\Omega_m,P\ge0\text{ in }\mathcal V_0^{m\times m}\implies K(a,a)(P)\ge0\text{ in }\mathcal 
V_1^{m\times m}\text{ for all }m\in\mathbb N.
\end{equation}
If $\mathcal V_0,\mathcal V_1$ are $C^*$-algebras, 
then \eqref{dieci} is equivalent to requiring that for all $N\in\mathbb N$, $m_1,m_2,\dots,m_N\in
\mathbb N$,
\begin{equation}\label{undici}
a^{(j)}\in\Omega_{m_j},P_j\in\mathcal V_0^{N\times m_j},b_j\in\mathcal V_1^{m_j},1\le j\le N\implies 
\sum_{i,j=1}^Nb_i^*K(a^{(i)},a^{(j)})(P_i^*P_j)b_j\ge0
\end{equation}
(see \cite[Proposition 2.2]{BMV}). If $K(a,a)$ is completely positive, 
then it is also completely bounded and $\|K(a,a)\|=\|K(a,a)\|_{\rm cb}
=\|K(a,a)(1)\|$.

\begin{ex}
Let $\mathcal A$ be a $C^*$-algebra.
The simplest non-constant nc kernel is $\mathcal A_{\rm nc}\times\mathcal A_{\rm nc}\ni(a,c)
\mapsto a\cdot c^*\in\mathcal L(\mathcal A,\mathcal A)_{\rm nc}.$ That is, for $m,n\in
\mathbb N$, $a\in\mathcal A^{m\times m},c\in\mathcal A^{n\times n}$ and $P\in
\mathcal A^{m\times n},$ we have $(a,c)\mapsto(P\mapsto aPc^*)$. More generally, if
$G,H$ are nc functions from $\Omega_{\rm nc}\subseteq\mathcal V_{\rm nc}$ to $\mathcal A_{\rm nc}$,
then $(a,c)\mapsto G(a)\cdot H(c)^*$ is an nc kernel. One can further pre-compose this 
kernel with a completely bounded map $\Psi\colon\mathcal A\to\mathcal A$:
$$
\Omega_m\times\Omega_n\ni(a,c)\mapsto\left[\mathcal A^{m\times n}\ni P
\mapsto G(a)({\rm Id}_{\mathbb C^{m\times n}}\otimes\Psi)(P)H(c)^*\in\mathcal A^{m\times n}\right]
$$
is an nc kernel. If $G=H$ and $\Psi$ is completely positive, then this is a cp nc kernel. In a certain 
sense, {\em all} nc kernels are of this form (we refer to \cite[Theorem 3.1]{BMV} for the precise statement).
Note also that $(a,c)\mapsto[P
\mapsto G(a)({\rm Id}_{\mathbb C^{m\times n}}\otimes\Psi)(P)H(c^*)^*]$ is an affine nc kernel.
\end{ex}

\begin{ex}\label{kernel-ex}
One of the main objectives of this paper is to analyze certain metric properties of 
noncommutative sets. An important class of such sets is given precisely by noncommutative kernels. 
Let $\mathcal A$ be a $C^*$-algebra, $\mathcal V$ be an operator space and $\Omega_{\rm nc}\subset
\mathcal V_{\rm nc}$ be an nc set. Assume that $K\colon\Omega_{\rm nc}\times\Omega_{\rm nc}
\to\mathcal L(\mathcal A)_{\rm nc}$ is a noncommutative kernel. We may define the set
$$
\mathcal D_K:=\coprod_{n=1}^\infty\underbrace{\{a\in\Omega_n\colon K(a,a)(I_n)>0\}}_{\mathcal D_n}.
$$
Observe that if $K$ were assumed instead to be an affine 
nc kernel, then the above definition would change to 
$\mathcal D_n=\{a\in\Omega_n\colon K(a,a^*)(I_n)>0\}$.
Clearly $\mathcal D_K$ may be empty or equal to $\Omega_{\rm nc}$.

If $a\in\Omega_m,\tilde{a}\in\Omega_{\tilde{m}}$, then, by 
\eqref{quatro} and \eqref{cinco}, $K(a\oplus\tilde{a},a\oplus\tilde{a})
\in\mathcal L(\mathcal A^{(m+\tilde{m})\times(m+\tilde{m})})$ and
\begin{eqnarray*}
K(a\oplus\tilde{a},a\oplus\tilde{a})(I_{m+\tilde{m}})& = &
\begin{bmatrix}
K(a,a)(I_m) & K(a,\tilde{a})(0)\\
K(\tilde{a},a)(0) & K(\tilde{a},\tilde{a})(I_{\tilde{m}})
\end{bmatrix}\\
& = & \begin{bmatrix}
K(a,a)(I_m)& 0 \\
0 & K(\tilde{a},\tilde{a})(I_{\tilde{m}})
\end{bmatrix}>0.
\end{eqnarray*}
Thus, under the weaker assumptions that $K$ is a global kernel,
we are guaranteed that $\mathcal D_K$ is a noncommutative set.
Under our assumption that $K$ is a noncommutative kernel, we
have in addition that for any $S\in GL_m(\mathbb C)$, 
$$
K(SaS^{-1},(S^{-1})^*aS^*)(I_m)=SK(a,a)(S^{-1}I_mS)S^{-1}=SK(a,a)(I_m)S^{-1}.
$$
Thus, if $S$ is unitary (that is, $S^*=S^{-1}$), then $K(SaS^{*},SaS^*)(I_m)>0$
whenever $K(a,a)(I_m)>0$. We conclude that {\em if $K$ is an nc kernel 
on $\Omega_{\rm nc}$, then $\mathcal D_K$ is a noncommutative set
which is invariant with respect to conjugation by scalar unitary matrices.}

Some of the more famous examples of noncommutative sets are given by nc kernels: 

\begin{enumerate}

\item[(i)] The noncommutative upper half-plane 
$H^+(\mathcal A)=\coprod_{n=1}^\infty H^+(\mathcal A^{n\times n}))$, 
where $H^+(\mathcal A^{n\times n})=\{a\in\mathcal A^{n\times n}\colon\Im a>0\}$
(we remind the reader that $\Im b=(b-b^*)/2i,\Re b=(b+b^*)/2$, so that
$b=\Re b+i\Im b$). The kernel in this case is $K(a,c)(P)=(aP-(cP^*)^*)/2i$,
$a\in\mathcal A^{m\times m},c\in\mathcal A^{n\times n}$, 
$P\in\mathcal A^{m\times n}$. It is easy to verify that this is a 
globally defined nc kernel. This set is important in free probability
(see \cite{V2,V1}).

\item[(ii)] The unit ball $B_1(\mathcal A)=\coprod_{n=1}^\infty B_1(\mathcal A^{n\times n})$, 
where $B_1(\mathcal A^{n\times n})=\{a\in\mathcal A^{n\times n}\colon\|a\|<1\}$ (the
norm considered being the $C^*$-norm on $\mathcal A^{n\times n}$).
Here the kernel is even simpler: $K(a,c)(P)=1-aPc^*.$

\item[(iii)] More generally, if $G$ is a noncommutative 
function with values in $\mathcal A$, 
we could define $H^+(\mathcal A)_G$
by using the kernel $K(a,c)(P)=(G(a)P-(G(c)P^*)^*)/2i$ and 
$B_1(\mathcal A)_G$ by using the kernel $K(a,c)(P)=1-G(a)PG(c)^*$.
\end{enumerate}

However, some are not:
\begin{enumerate}

\item[(iv)] 
Consider $\mathcal N(\mathcal A)=\coprod_{n=1}^\infty\{a\in\mathcal A^{n\times n}\colon
a^n=0\}$. Clearly $\mathcal N(\mathcal A)$ is closed under direct sums, and, moreover,
if $S\in GL_n(\mathbb C)$ and $a\in\{a\in\mathcal A^{n\times n}\colon
a^n=0\}$, then $(SaS^{-1})^n=Sa^nS^{-1}=0$. So this set is in fact 
invariant under conjugation by {\em all} of $GL_n(\mathbb C)$, not 
just by the unitary group. This is because $\mathcal N(\mathcal A)$ is 
``thin,'' in the sense that it has empty interior in all the natural topologies 
on nc sets (see below). Thus, one cannot expect that $\mathcal N(\mathcal A)$
is of the form $\mathcal D_K$ for an nc kernel $K$.
\end{enumerate}
\end{ex}

\subsection{Three topologies on noncommutative sets}\label{top}

As already stated, operator spaces constitute
the natural framework for noncommutative 
function theory. We recall that (see, for instance,
\cite{ER}) if $\mathcal V$
is an operator space, then 
$$
\|a\oplus\tilde{a}\|_{m+\tilde{m}}=\max\{\|a\|_m,\|\tilde{a}\|_{\tilde{m}}\},\quad
m,\tilde{m}\in\mathbb N,a\in\mathcal V^{m\times m},\tilde{a}\in
\mathcal V^{\tilde{m}\times\tilde{m}},
$$
and
$$
\|SaT\|_n\leq\|S\|\|a\|_m\|T\|,\quad m,n\in\mathbb N,a\in\mathcal V^{m\times m},
S\in\mathbb C^{n\times m},T\in\mathbb C^{m\times n}.
$$
A topology naturally compatible with these norm conditions is the
{\em uniformly-open topology}. It has as basis balls defined the following
way: if $c\in\mathcal V^{s\times s}$ and $r\in(0,+\infty)$, then
$$
B_{\rm nc}(c,r)=\coprod_{n=1}^\infty\left\{a\in\mathcal V^{sn\times sn}\colon
\left\|a-\oplus_{j=1}^nc\right\|_{sn}<r\right\}.
$$
This topology is not Hausdorff.
A noncommutative function $f$ defined on a noncommutative
set $\Omega_{\rm nc}\subseteq\mathcal V_{\rm nc}$ with values 
in an operator space is said to be {\em uniformly analytic} if 
$\Omega_{\rm nc}$ is uniformly open, and $f$ is uniformly 
locally bounded and complex differentiable at
each level. It is shown in \cite[Corollary 7.28]{ncfound} 
that $f$ is analytic if and only if it is uniformly 
locally bounded (that is, the requirement of complex
differentiability at each level is automatically satisfied by 
an nc function which is uniformly locally bounded on
a uniformly open nc set).

The second important topology (already mentioned above) 
is the {\em finitely open topology}: a set 
$\Omega_{\rm nc}\subseteq\mathcal V_{\rm nc}$ is called
finitely open if for any $n\in\mathbb N$, the intersection
of $\Omega_n$ with any finite dimensional subspace 
$\mathcal X$ of $\mathcal V^{n\times n}$ is open in 
the Euclidean topology of $\mathcal X$. It is shown in
\cite[Theorem 7.2]{ncfound} that if $f$ is a noncommutative
function defined on $\Omega_{\rm nc}$ which is locally
bounded on slices, then $f$ is analytic on slices, in the
sense that for any $n\in\mathbb N$ and any finite 
dimensional subspace $\mathcal X$ of 
$\mathcal V^{n\times n}$, $f|_{\mathcal X}$ is
analytic as a function of several complex variables.

Finally, one can also consider the topology in which 
a set $\Omega_{\rm nc}$ is open in $\mathcal V_{\rm nc}$
if and only if $\Omega_n$ is open in the topological vector space topology of
$\mathcal V^{n\times n}$ for all $n\in\mathbb N$. Observe
that such a set is also finitely open. We refer to it as the
{\em level topology.}


\section{A (pseudo)distance on noncommutative sets}\label{pseudodistance}

Let $\mathcal V$ be a complex topological vector space. As we 
progress through the paper, we put more and more structure on $\mathcal V$,
but for our first definition, we need nothing more than the axioms
of a complex topological vector space. For now we endow 
$\mathcal V^{n\times m}$, $n,m\in\mathbb N$, with the usual (product) 
topology. Let $\mathcal D$ be a noncommutative subset of 
$\mathcal V_{\rm nc}$ and consider the following properties:
\begin{enumerate}
\item For any $n\in\mathbb N$, $\mathcal D_n$ is open in 
$\mathcal V^{n\times n}$;\label{propr1set}
\item If $U$ is a unitary $n\times n$ complex matrix and $a\in
\mathcal D_n$, then $UaU^*\in\mathcal D_n$;\label{propr2set}
\item If $a\in\mathcal V^{n\times n},c\in\mathcal V^{m\times m}$ 
are such that $\begin{bmatrix} 
a & 0\\ 
0 & c
\end{bmatrix}\in\mathcal D_{n+m}$, then $a\in\mathcal D_n$, $c\in
\mathcal D_m$. (Note that this is a sort of ``converse'' of 
part (b) of the definition of noncommutative sets.)\label{propr3set}
\end{enumerate}
Let $\mathcal S_{n,m}=\{g\colon\mathcal V^{n\times m}\to[0,+\infty]
\colon g(tb)=tg(b)\forall t\ge0\}$ (with the convention $0\times(+
\infty)=+\infty$), and define 
$\mathcal S=\displaystyle\coprod_{n,m\in\mathbb N}\mathcal S_{n,m}$.
Define a function $\delta_\mathcal D\colon\mathcal D\times\mathcal D\to
\mathcal S$ such that $\delta_\mathcal D(a,c)\in\mathcal 
S_{n,m}$ whenever $a\in\mathcal D_n,c\in\mathcal D_m$, by
\begin{equation}\label{InfDistance1}
\delta_\mathcal D(a,c)(b)=\left[\sup\left\{t\in[0,+\infty]\colon
\begin{bmatrix} 
a & sb\\
0 & c
\end{bmatrix}\in\mathcal D_{n+m}\text{ for all }s\in[0,t]\right\}
\right]^{-1},
\end{equation}
with the convention $1/0=+\infty$.
Observe first that $\delta_\mathcal D(a,c)$ is indeed well-defined
because noncommutative sets respect direct sums: 
$\begin{bmatrix} 
a & sb\\
0 & c
\end{bmatrix}\in\mathcal D_{n+m}$ at least for $s=0$. Second, 
$\delta_\mathcal D(a,c)(b)=0\iff\begin{bmatrix} 
a & sb\\
0 & c
\end{bmatrix}\in\mathcal D_{n+m}$ for all $s\in[0,+\infty)$.
Third, if $s_0\in(0,+\infty)$ is given, then, as indicated
in the definition, $\delta_\mathcal D(a,c)(s_0b)=s_0
\delta_\mathcal D(a,c)(b)$. Indeed, if $\delta_\mathcal D(a,c)(b)=0$
or $+\infty$, then the statement is obvious. Else, if
$\begin{bmatrix} 
a & sb\\
0 & c
\end{bmatrix}\in\mathcal D_{n+m}$ for all $s\in[0,
\delta_\mathcal D(a,c)(b)^{-1})$, then 
$\begin{bmatrix} 
a & sb\\
0 & c
\end{bmatrix}=\begin{bmatrix} 
a & \frac{s}{s_0}s_0b\\
0 & c
\end{bmatrix}$, so that $\begin{bmatrix} 
a & r(s_0b)\\
0 & c
\end{bmatrix}\in\mathcal D_{n+m}$ for all $r\in
[0,(s_0\delta_\mathcal D(a,c)(b))^{-1})$, which shows
that $s_0\delta_\mathcal D(a,c)(b)=\delta_\mathcal D(a,c)(s_0b)$.

\begin{remark}\label{rem-cont}
Given a complex vector space $\mathcal V$ endowed with a topology for  which the multiplication with
positive scalars is continuous (a requirement automatically satisfied by a topological vector space), 
the quantity $\delta$ is upper  semicontinuous in its three variables whenever it is defined on an nc 
set which satisfies property \eqref{propr1set} above. Indeed, consider such an nc set
$\Omega\subseteq\mathcal V_{\rm nc}$. It is enough to prove the statement
at level one. Thus, consider three nets $\{a_\iota\}_{\iota\in I},
\{c_\iota\}_{\iota\in I}$, and $\{b_\iota\}_{\iota\in I}$ converging to
$a,c\in\Omega_1$ and $b\in\mathcal V$, respectively. Let
$t\in(0,+\infty)$ be chosen so that $\begin{bmatrix}
a & sb \\
0 & c
\end{bmatrix}\in\Omega_2$ for all $s\leq t$. Since $\Omega_2$ is open in
the topology of $\mathcal V^{2\times 2}$, there exists an $\iota_0\in I$
such that $\begin{bmatrix}
a_\iota & [0,t]b_\iota \\
0 & c_\iota
\end{bmatrix}\subset\Omega_2$ for all $\iota\ge\iota_0$ (we have used
here the compactness of $[0,t]$). Thus, $t^{-1}>\delta(a,c)(b)$
implies that $t^{-1}>\delta(a_\iota,c_\iota)(b_\iota)$ for
all $\iota$ large enough. This implies that
\begin{equation}\label{limsup}
\limsup_{\iota\in I}\delta(a_\iota,c_\iota)(b_\iota)\leq\delta(a,c)(b),\quad a,c\in\Omega_1,b\in\mathcal V.
\end{equation}
This shows that $\delta$
is upper semicontinuous on nc sets that satisfy property \eqref{propr1set} under very
mild conditions on the topology of the underlying vector space.
Remarkably, under the supplementary hypothesis that the intersection
$\partial\Omega_{2k}\cap\begin{bmatrix}
a & \mathbb R_+b \\
0 & c
\end{bmatrix}$ is discrete for all $b\in\mathcal V^{k\times k}$,
$a,c\in\Omega_k$, the exact same argument applied to the
complement of $\Omega$ shows that $\delta$
is lower semicontinuous, and thus continuous.

\end{remark}

The following proposition is straightforward,
but, unless some of the hypotheses \eqref{propr1set} --
\eqref{propr3set} from above are assumed, it may well
be vacuous.

\begin{prop}\label{contr}
Let $\mathcal V,\mathcal W$ be two complex topological vector
spaces and let $\mathcal D$ and $\mathcal E$ be two noncommutative 
subsets of $\mathcal V_{\rm nc}$ and $\mathcal W_{\rm nc},$
respectively. Assume that $f\colon\mathcal D\to\mathcal E$
is a function such that 
\begin{enumerate}
\item[(a)] for any $a\in\mathcal D_n$, we have $f(a)\in\mathcal E_n$;
\item[(b)] $f$ respects direct sums;
\item[(c)] if $a\in\mathcal D_n,c\in\mathcal D_m$ and $b\in
\mathcal V^{n\times m}$ are such that $\begin{bmatrix} 
a & b\\
0 & c
\end{bmatrix}\in\mathcal D_{n+m},$ then there exists 
a function of three variables denoted $\Delta f(a,c)(b)$ 
such that $\Delta f(a,c)(tb)=t\Delta f(a,c)(b)$ for all $t\in[0,+\infty)$
with the property that $tb$ is in the domain of $\Delta f(a,c)(\cdot)$,
and $f$ satisfies 
$$
f\left(\begin{bmatrix} 
a & b\\
0 & c
\end{bmatrix}\right)=\begin{bmatrix} 
f(a) & \Delta f(a,c)(b)\\
0 & f(c)
\end{bmatrix}.
$$
\end{enumerate}
Then 
$$
\delta_{\mathcal D}(a,c)(b)\ge\delta_{\mathcal E}(f(a),f(c))
(\Delta f(a,c)(b)),\quad a\in\mathcal D_n,c\in\mathcal D_m,
b\in\mathcal V^{n\times m}.
$$
\end{prop}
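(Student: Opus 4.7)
My plan is to prove the inequality by working directly with the supremum that appears in the definition of $\delta$. Write
\[
T_\mathcal D(a,c)(b) := \sup\!\left\{t\in[0,+\infty]\colon\begin{bmatrix}a & sb\\ 0 & c\end{bmatrix}\in\mathcal D_{n+m}\text{ for all }s\in[0,t]\right\}
\]
and let $T_\mathcal E(f(a),f(c))(\Delta f(a,c)(b))$ be the analogous supremum on the $\mathcal E$-side. Since $\delta_\mathcal D(a,c)(b)=T_\mathcal D(a,c)(b)^{-1}$ and similarly for $\mathcal E$, the desired inequality is equivalent to $T_\mathcal D(a,c)(b)\leq T_\mathcal E(f(a),f(c))(\Delta f(a,c)(b))$.

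The core step is to show the pointwise containment of the two sets over which the suprema are taken. Fix any $t$ in the defining set for $T_\mathcal D(a,c)(b)$, and fix $s\in[0,t]$. Then $\begin{bmatrix}a & sb\\ 0 & c\end{bmatrix}\in\mathcal D_{n+m}$, so by hypothesis (a), $f\!\left(\begin{bmatrix}a & sb\\ 0 & c\end{bmatrix}\right)\in\mathcal E_{n+m}$. Hypothesis (c) rewrites this as
\[
\begin{bmatrix}f(a) & \Delta f(a,c)(sb)\\ 0 & f(c)\end{bmatrix}\in\mathcal E_{n+m},
\]
and the homogeneity clause in (c) (applied to the scalar $s$, which is legitimate because $sb$ lies in the domain of $\Delta f(a,c)(\cdot)$ thanks to $\begin{bmatrix}a & sb\\ 0 & c\end{bmatrix}\in\mathcal D_{n+m}$) reduces the off-diagonal entry to $s\,\Delta f(a,c)(b)$. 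Hence $\begin{bmatrix}f(a) & s\,\Delta f(a,c)(b)\\ 0 & f(c)\end{bmatrix}\in\mathcal E_{n+m}$ for every $s\in[0,t]$, meaning that $t$ also lies in the defining set for $T_\mathcal E(f(a),f(c))(\Delta f(a,c)(b))$.

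Passing to the supremum yields $T_\mathcal D(a,c)(b)\leq T_\mathcal E(f(a),f(c))(\Delta f(a,c)(b))$, and inverting (under the convention $1/0=+\infty$ and $1/(+\infty)=0$ already fixed in the text) gives the claimed inequality. The boundary cases pose no real difficulty: if $\delta_\mathcal D(a,c)(b)=+\infty$ the statement is vacuous, while if $\delta_\mathcal D(a,c)(b)=0$ then the defining set for $T_\mathcal D$ is all of $[0,+\infty)$, the argument above shows the same for $T_\mathcal E$, and therefore $\delta_\mathcal E(f(a),f(c))(\Delta f(a,c)(b))=0$ as well. Hypothesis (b) is not needed for the present statement; the whole content of the proposition is that noncommutative maps transport the ``admissible off-diagonal scaling'' data of a set to its image via $\Delta f$, which is already packaged by the matricial identity in (c). The only place where one has to be mildly careful is the appeal to the homogeneity clause of (c) at scalars $s$ with $s\in[0,t]$, but this is precisely the content of that clause under the condition that $sb$ is in the relevant domain, which is guaranteed here.
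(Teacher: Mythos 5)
Your argument is correct and is essentially the paper's own proof, just phrased without the three-way case split on $t_0$: both arguments apply $f$ to $\begin{bmatrix}a&sb\\0&c\end{bmatrix}$, invoke (c) and homogeneity to get $\begin{bmatrix}f(a)&s\Delta f(a,c)(b)\\0&f(c)\end{bmatrix}\in\mathcal E_{n+m}$, and compare suprema. Your observation that the containment of admissible $t$-sets subsumes all the boundary cases is a minor streamlining; otherwise the content matches the paper's.
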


Note that the hypothesis on the homogeneity of $\Delta f(a,c)(b)$ in 
$b$ is meaningful only if there exists some interval $(t,r)$ such that 
$\begin{bmatrix} 
a & sb\\
0 & c
\end{bmatrix}\in\mathcal D_{n+m}$ for all $s\in(t,r)$. Otherwise,
one can simply define $\Delta f(a,c)(sb)$ as $s\Delta f(a,c)(b)$.

\begin{proof}
The statement is tautological: consider $a\in\mathcal D_n,c\in
\mathcal D_m$ and $b\in\mathcal V^{n\times m}$ such that 
$\begin{bmatrix} 
a & sb\\
0 & c
\end{bmatrix}\in\mathcal D_{n+m}$ for all $s\in[0,t_0)$. If 
$t_0=+\infty$, then $\delta_{\mathcal D}(a,c)(b)=0$ and
$$
f\left(\begin{bmatrix} 
a & sb\\
0 & c
\end{bmatrix}\right)=\begin{bmatrix} 
f(a) & \Delta f(a,c)(sb)\\
0 & f(c)
\end{bmatrix}=\begin{bmatrix} 
f(a) & s\Delta f(a,c)(b)\\
0 & f(c)
\end{bmatrix}
$$
for all $s\in[0,+\infty)$, so that $\delta_{\mathcal E}(f(a),f(c))
(\Delta f(a,c)(b))=0.$
If $t_0=0$ (i.e. $\delta_{\mathcal D}(a,c)(b)=+\infty$), then the inequality
$\delta_{\mathcal D}(a,c)(b)\ge\delta_{\mathcal E}(f(a),f(c))
(\Delta f(a,c)(b))$ is obvious thanks to hypothesis (b). Finally, if $t_0=
\delta_{\mathcal D}(a,c)(b)^{-1}\in(0,+\infty)$, then 
$$
f\left(\begin{bmatrix} 
a & sb\\
0 & c
\end{bmatrix}\right)=\begin{bmatrix} 
f(a) & \Delta f(a,c)(sb)\\
0 & f(c)
\end{bmatrix}=\begin{bmatrix} 
f(a) & s\Delta f(a,c)(b)\\
0 & f(c)
\end{bmatrix}\in\mathcal E_{n+m}
$$
for all $s\in[0,t_0)$, which implies 
$t_0=\delta_{\mathcal D}(a,c)(b)^{-1}\leq\delta_{\mathcal E}
(f(a),f(c))(\Delta f(a,c)(b))^{-1}$. This concludes the proof.
\end{proof}

\begin{remark}
\begin{enumerate}
\item
If we assume hypotheses \eqref{propr1set} for $\mathcal D$,
then for any $a\in\mathcal D_n,c\in\mathcal D_m,b\in\mathcal V^{n
\times m}$ we are guaranteed that there exists a $t_0\in(0,+\infty]$
such that $\begin{bmatrix} 
a & sb\\
0 & c
\end{bmatrix}\in\mathcal D_{n+m}$ for all $s\in[0,t_0)$. Thus,
under a very mild assumption of openness in a complex topological
vector space, we are guaranteed that $\delta_{\mathcal D}(a,c)(b)$
is finite (possibly zero).
\item Assumption \eqref{propr2set} on $\mathcal D$ is sufficient 
(although not necessary) in order to guarantee that $\begin{bmatrix} 
a & zb\\
0 & c
\end{bmatrix}\in\mathcal D_{n+m}$ for all $z\in\mathbb C,$
$|z|<\delta_{\mathcal D}(a,c)(b)^{-1}$. Indeed, one simply
conjugates $\begin{bmatrix} 
a & sb\\
0 & c
\end{bmatrix}$ with the unitary $\begin{bmatrix} 
e^{i\theta/2}1_n & 0\\
0 & e^{-i\theta/2}1_m
\end{bmatrix}\in\mathbb C^{(n+m)\times(n+m)},$ where $\theta$ is the 
argument of $z$.
\item If, in Proposition \ref{contr}, the sets $\mathcal D$
and $\mathcal E$ are assumed to satisfy hypotheses 
\eqref{propr1set} and \eqref{propr2set}, and in addition 
$b\mapsto\Delta f(a,c)(b)$ satisfies $\Delta f(a,c)(zb)=
z\Delta f(a,c)(b)$, then we are guaranteed that the statement of
the proposition is not vacuous. In particular, 
\end{enumerate}
\end{remark}

\begin{cor}\label{trentatre}
If $f\colon\mathcal D\to\mathcal E$ is a
locally bounded noncommutative function on a finitely
open subset, then $f$ satisfies
$\delta_{\mathcal E}(f(a),f(c))
(\Delta f(a,c)(b))\le\delta_{\mathcal D}(a,c)(b),$ $a\in\mathcal D_n,c\in\mathcal D_m,
b\in\mathcal V^{n\times m},m,n\in\mathbb N.$
\end{cor}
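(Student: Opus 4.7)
My plan is to derive this corollary as a direct application of Proposition \ref{contr}, so the proof should reduce to checking its three hypotheses (a), (b), (c) for an arbitrary locally bounded nc function $f\colon\mathcal D\to\mathcal E$ on a finitely open nc set. Conditions (a) and (b) are immediate: the level-preservation $f(\mathcal D_n)\subseteq\mathcal E_n$ is part of the definition of a noncommutative function, and respect of direct sums is axiom (1). So the heart of the matter is condition (c), which asks for a map $\Delta f(a,c)$ with the block decomposition of $f\bigl(\bigl[\begin{smallmatrix}a & b\\ 0 & c\end{smallmatrix}\bigr]\bigr)$ and with positive-homogeneous dependence on $b$.

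For condition (c) I would invoke the essential property \eqref{FDQ} of noncommutative functions. Whenever $a\in\mathcal D_n$, $c\in\mathcal D_m$ and $\bigl[\begin{smallmatrix}a & b\\ 0 & c\end{smallmatrix}\bigr]\in\mathcal D_{n+m}$, that identity furnishes a linear map $\Delta f_{n,m}(a,c)\colon\mathcal V^{n\times m}\to\mathcal W^{n\times m}$ realizing the required block form; moreover, as noted in the excerpt after \eqref{FDQ}, $f_{n+m}$ even extends unambiguously to every upper-triangular matrix $\bigl[\begin{smallmatrix}a & b\\ 0 & c\end{smallmatrix}\bigr]$ with $a\in\mathcal D_n$, $c\in\mathcal D_m$ and arbitrary $b\in\mathcal V^{n\times m}$, so $\Delta f(a,c)$ is a genuine linear operator on the full slice $\mathcal V^{n\times m}$. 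Linearity in $b$ immediately yields $\Delta f(a,c)(sb)=s\,\Delta f(a,c)(b)$ for $s\ge 0$, which is precisely the homogeneity condition demanded in Proposition \ref{contr}(c). The standing assumptions that $\mathcal D$ be finitely open and $f$ locally bounded (on slices) enter at this stage via \cite[Theorem~7.2]{ncfound}, which guarantees that $f$ is analytic on slices and hence that the construction behind \eqref{FDQ} genuinely applies to every admissible block upper-triangular element.

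With (a), (b), (c) verified, Proposition \ref{contr} applies verbatim and yields
\[
\delta_{\mathcal D}(a,c)(b)\ge\delta_{\mathcal E}\bigl(f(a),f(c)\bigr)\bigl(\Delta f(a,c)(b)\bigr)
\]
for all $a\in\mathcal D_n$, $c\in\mathcal D_m$, $b\in\mathcal V^{n\times m}$, which is the claim. No step is really deep — the proof is essentially a matter of bookkeeping — but the one point I would take care to spell out is \emph{why} the block-triangular decomposition with a positively homogeneous $\Delta f(a,c)$ is automatic in the finitely open, locally bounded setting, so that Proposition \ref{contr} is not being applied vacuously. This is exactly what the extension of $f_{n+m}$ to all upper-triangular blocks over $\mathcal D_n\times\mathcal D_m$ in \cite[Section~2.2]{ncfound} provides, and it allows one to evaluate both sides of the inequality at every $b\in\mathcal V^{n\times m}$ without further hypotheses on $\mathcal D$.
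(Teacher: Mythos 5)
Your proof is correct and follows the same route the paper uses: the corollary is presented as an immediate consequence of Proposition \ref{contr}, whose hypotheses (a), (b), (c) you verify exactly as the paper's preceding remark indicates, with condition (c) supplied by the linearity of the difference-differential operator from \eqref{FDQ} together with its extension to all upper-triangular blocks over $\mathcal D_n\times\mathcal D_m$ in \cite[Section~2.2]{ncfound}. Nothing is missing; the bookkeeping is precisely what the paper intends.
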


Next, we study some of the properties of $\delta_\mathcal D$
in more detail.

\begin{lemma}\label{lemma3.3}
Assume that the noncommutative subset $\mathcal D$ of $\mathcal V_{\rm nc}$ satisfies
properties \eqref{propr2set} and \eqref{propr3set}.
For any unitary matrices $U\in\mathbb C^{n\times n},V\in\mathbb 
C^{m\times m}$, $a_1,a_2\in\mathcal D_n,c_1,c_2\in\mathcal D_m$, 
$b_{11}\in\mathcal V^{n\times n},b_{12}\in\mathcal V^{n\times m},
b_{21}\in\mathcal V^{m\times n},b_{22}\in\mathcal V^{m\times m}$,
we have
\begin{equation}\label{unit}
\delta_\mathcal D(Ua_1U^*,Vc_2V^*)(Ub_{12}V^*)=\delta_\mathcal D(a_1,
c_2)(b_{12})
\end{equation}
\begin{equation}\label{diagonal}
\delta_\mathcal D\left(\begin{bmatrix}
a_1 & 0\\
0 & c_1\end{bmatrix},\begin{bmatrix}
a_2 & 0\\
0 & c_2\end{bmatrix}\right)\begin{bmatrix}
b_{11} & 0\\
0 & b_{22}\end{bmatrix} =  \max\{\delta_\mathcal D(a_1,c_1)(b_{11}),
\delta_\mathcal D(a_2,c_2)(b_{22})\},
\end{equation}
\begin{equation}\label{counterdiagonal}
\delta_\mathcal D\left(\begin{bmatrix}
a_1 & 0\\
0 & c_1\end{bmatrix},\begin{bmatrix}
a_2 & 0\\
0 & c_2\end{bmatrix}\right)\begin{bmatrix}
0 & b_{12}\\
b_{21}& 0\end{bmatrix} =  \max\{\delta_\mathcal D(a_1,c_2)(b_{12}),
\delta_\mathcal D(c_1,a_2)(b_{21})\}.
\end{equation}
\end{lemma}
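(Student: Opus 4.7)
My plan is to reduce all three identities to a common mechanism: in each case, the upper triangular $2\times 2$ block matrix $\begin{bmatrix} X & sB \\ 0 & Y \end{bmatrix}$ appearing in the definition of the $\delta$ on the left-hand side can, after conjugation by a suitable scalar unitary permutation matrix, be written as a direct sum of two smaller upper triangular block matrices, each of the form appearing in the $\delta$'s on the right-hand side. Once this is done, property \eqref{propr2set} makes membership in $\mathcal D$ invariant under the permutation, property \eqref{propr3set} together with the direct-sum closure (b) of nc sets makes membership of the direct sum equivalent to simultaneous membership of both summands, and the supremum of $s$ defining $\delta$ on the left therefore equals the minimum of the two suprema defining the $\delta$'s on the right. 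Reciprocation turns this minimum into the maximum displayed in each identity.

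For \eqref{unit}, the needed unitary is simply $W = \begin{bmatrix} U & 0 \\ 0 & V \end{bmatrix} \in \mathbb{C}^{(n+m)\times(n+m)}$: a direct computation gives $W\begin{bmatrix} a_1 & sb_{12} \\ 0 & c_2 \end{bmatrix}W^* = \begin{bmatrix} Ua_1 U^* & sU b_{12} V^* \\ 0 & V c_2 V^* \end{bmatrix}$, so \eqref{propr2set} at level $n+m$ equates the admissible sets of $s$ and hence the two $\delta$-values.

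For \eqref{diagonal}, I would set $M_s := \begin{bmatrix} a_1 \oplus c_1 & s(b_{11} \oplus b_{22}) \\ 0 & a_2 \oplus c_2 \end{bmatrix}$, write it out as a $4 \times 4$ block matrix, and verify by direct inspection that the permutation $\sigma$ reordering rows and columns in the pattern ($a_1$-block, $a_2$-block, $c_1$-block, $c_2$-block) produces $P_\sigma M_s P_\sigma^* = \begin{bmatrix} a_1 & sb_{11} \\ 0 & a_2 \end{bmatrix} \oplus \begin{bmatrix} c_1 & sb_{22} \\ 0 & c_2 \end{bmatrix}$. Axiom (b) together with \eqref{propr2set} and \eqref{propr3set} invoked at levels $2n$, $2m$, and $2n+2m$ then finish the identity. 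The argument for \eqref{counterdiagonal} is structurally identical, using instead the permutation $\tau$ listing the blocks in the order ($a_1$, $c_2$, $c_1$, $a_2$): it conjugates the analogous $M_s$ (with counter-diagonal $b$) into $\begin{bmatrix} a_1 & sb_{12} \\ 0 & c_2 \end{bmatrix} \oplus \begin{bmatrix} c_1 & sb_{21} \\ 0 & a_2 \end{bmatrix}$.

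The only genuine work in the entire lemma is identifying and verifying the two permutations $\sigma$ and $\tau$; this is purely combinatorial block shuffling, and once it is done correctly, every other step is a direct invocation of the hypotheses. I therefore do not foresee a real obstacle.
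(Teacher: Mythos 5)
Your proposal is correct and takes essentially the same route as the paper: conjugate the $2\times 2$ upper-triangular block matrix by a block permutation (scalar unitary) matrix so that it becomes a direct sum of two smaller upper-triangular blocks, then invoke hypothesis \eqref{propr2set} for permutation invariance, and axiom (b) of nc sets together with hypothesis \eqref{propr3set} to equate membership of the direct sum with simultaneous membership of both summands, giving the min of the suprema and hence the max of the $\delta$-values. The only immaterial difference is your choice of permutation for \eqref{counterdiagonal}, which reorders the blocks to $(a_1,c_2,c_1,a_2)$ rather than the paper's $(c_1,a_2,a_1,c_2)$.
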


\begin{proof}
Relation \eqref{unit} follows trivially from hypothesis 
\eqref{propr2set}: for $s\ge0$, we have the chain of equivalences
\begin{eqnarray*}
\begin{bmatrix}
a_1 & sb_{12}\\
0 & c_2\end{bmatrix}\in\mathcal D_{n+m} & \iff &
\begin{bmatrix}
U & 0\\
0 & V\end{bmatrix}\begin{bmatrix}
a_1 & sb_{12}\\
0 & c_2\end{bmatrix}\begin{bmatrix}
U^* & 0\\
0 & V^*\end{bmatrix}\in\mathcal D_{n+m}\\
& \iff & \begin{bmatrix}
Ua_1U^* & sUb_{12}V^*\\
0 & Vc_2V^*\end{bmatrix}\in\mathcal D_{n+m}.
\end{eqnarray*}
A slight variation of this trick proves \eqref{diagonal} and 
\eqref{counterdiagonal}. 
Let 
$$
U_0=\begin{bmatrix}
0_{m\times n} & I_m & 0_{m\times n} & 0_{m\times m}\\
0_{n\times n} & 0_{n\times m} & I_n & 0_{n\times m} \\
I_n & 0_{n\times m} & 0_{n\times n} & 0_{n\times m} \\
0_{m\times n} & 0_{m\times m} & 0_{m\times n} & I_m\end{bmatrix},
$$
a complex $(2n+2m)\times(2n+2m)$ unitary matrix. For any $s\ge0$, we have
\begin{eqnarray*}
\begin{bmatrix}
a_1 & 0 & 0 & sb_{12}\\
0 & c_1 & sb_{21} & 0 \\
0 & 0 & a_2 & 0 \\
0 & 0 & 0 & c_2\end{bmatrix}\in\mathcal D_{2(n+m)} & \iff &
U_0\begin{bmatrix}
a_1 & 0 & 0 & sb_{12}\\
0 & c_1 & sb_{21} & 0 \\
0 & 0 & a_2 & 0 \\
0 & 0 & 0 & c_2\end{bmatrix}U_0^*\in\mathcal D_{2(n+m)}\\
& \iff & \begin{bmatrix}
c_1 & sb_{21} & 0 & 0\\
0 & a_2 & 0 & 0\\
0 & 0 & a_1 & sb_{12}\\
0 & 0 & 0 & c_2\end{bmatrix}\in\mathcal D_{m+n+n+m}\\
& \iff & \begin{bmatrix}
c_1 & sb_{21}\\
0 & a_2\end{bmatrix},\begin{bmatrix}
a_1 & sb_{12}\\
0 & c_2\end{bmatrix}\in\mathcal D_{n+m},
\end{eqnarray*}
where we have 
used property \eqref{propr3set} in the last equivalence
and property \eqref{propr2set} in the first.
This proves \eqref{counterdiagonal}. Relation
\eqref{diagonal} is proved the same way.
\end{proof}

The next lemma shows that, in a certain way, $\delta_\mathcal D$
is itself a sort of noncommutative function.

\begin{lemma}\label{lemma3.4}
Assume that $\mathcal D\subseteq\mathcal V_{\rm nc}$ satisfies properties 
\eqref{propr2set} and \eqref{propr3set}. For any $n,m\in\mathbb N$, 
$a\in\mathcal D_n,c\in\mathcal D_m,b\in\mathcal V^{n\times m}$,
and any $k\in\mathbb N$, we have
$$
\delta_\mathcal D(I_k\otimes a,I_k\otimes c)(Z\otimes b)=
\delta_\mathcal D(a,c)(b)\|ZZ^*\|^\frac12,\quad Z\in\mathbb C^{k\times
k}.
$$
\end{lemma}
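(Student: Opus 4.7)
The plan is to reduce the general matrix $Z$ to a diagonal one via singular value decomposition, then to read off the answer from the already-established block-diagonal formula \eqref{diagonal}.

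First I would write $Z = U\Sigma V^{*}$ with $U,V\in\mathbb{C}^{k\times k}$ unitary and $\Sigma=\mathrm{diag}(\sigma_1,\dots,\sigma_k)$, where $\sigma_1\ge\cdots\ge\sigma_k\ge 0$ are the singular values of $Z$, so that $\|ZZ^{*}\|^{1/2}=\|Z\|=\sigma_1$. The key observation is that $U\otimes I_n\in\mathbb{C}^{kn\times kn}$ and $V\otimes I_m\in\mathbb{C}^{km\times km}$ are unitaries which commute (in the appropriate sense) with the tensor factors:
\begin{align*}
(U\otimes I_n)(I_k\otimes a)(U^{*}\otimes I_n) & = I_k\otimes a,\\
(V\otimes I_m)(I_k\otimes c)(V^{*}\otimes I_m) & = I_k\otimes c,\\
(U\otimes I_n)(\Sigma\otimes b)(V^{*}\otimes I_m) & = Z\otimes b.
\end{align*}
Applying identity \eqref{unit} from Lemma \ref{lemma3.3} with these unitaries, we obtain
$$
\delta_{\mathcal D}(I_k\otimes a,I_k\otimes c)(Z\otimes b)=\delta_{\mathcal D}(I_k\otimes a,I_k\otimes c)(\Sigma\otimes b).
$$

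Next, since $\Sigma$ is diagonal, both $I_k\otimes a$, $I_k\otimes c$, and $\Sigma\otimes b$ are block-diagonal with $k$ blocks, the $j$th blocks being $a$, $c$, and $\sigma_j b$, respectively. Iterating \eqref{diagonal} (which extends from two blocks to $k$ blocks by a straightforward induction, grouping the first $k-1$ blocks together), and then invoking the homogeneity $\delta_{\mathcal D}(a,c)(\sigma_j b)=\sigma_j\delta_{\mathcal D}(a,c)(b)$ established just after the definition \eqref{InfDistance1}, we get
$$
\delta_{\mathcal D}(I_k\otimes a,I_k\otimes c)(\Sigma\otimes b)=\max_{1\le j\le k}\delta_{\mathcal D}(a,c)(\sigma_j b)=\Bigl(\max_{1\le j\le k}\sigma_j\Bigr)\,\delta_{\mathcal D}(a,c)(b)=\sigma_1\,\delta_{\mathcal D}(a,c)(b).
$$
Since $\sigma_1=\|ZZ^{*}\|^{1/2}$, combining the two displays yields the claim.

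The only real subtlety is the $k$-block extension of \eqref{diagonal}, but this follows immediately from the two-block version by induction, using property \eqref{propr3set} (so that a direct-sum element lies in $\mathcal D$ iff each summand does) to peel off one block at a time; no new idea is required. The use of property \eqref{propr2set} is encapsulated in Lemma \ref{lemma3.3}, so nothing extra is needed here.
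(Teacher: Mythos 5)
Your proof is correct, and it is in essence the same route the paper takes (diagonalize $Z$ by a singular-value decomposition, pass to the block-diagonal picture, then use positive homogeneity); but the organization is cleaner than the paper's. The paper splits the argument into two steps: first it proves the identity in the special case $a=c$ by conjugating the $2\times2$ block matrix with $\mathrm{diag}(U\otimes 1,V\otimes 1)$, and only afterwards reduces the general $a\neq c$ case to the diagonal case via \eqref{counterdiagonal} and a further permutation. You bypass that reduction entirely by observing that \eqref{unit} already allows distinct unitaries of different sizes on the two sides, so $U\otimes I_n\in\mathbb C^{kn\times kn}$ and $V\otimes I_m\in\mathbb C^{km\times km}$ can be applied directly to reach $\Sigma\otimes b$ in one stroke, after which the $k$-block extension of \eqref{diagonal} and homogeneity finish the job. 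One small caveat: as typeset, \eqref{diagonal} appears to have its indices scrambled (the pairing $\delta_{\mathcal D}(a_1,c_1)(b_{11})$ does not typecheck when $n\neq m$); the correct reading, consistent with the proof of Lemma \ref{lemma3.3} and the one you implicitly used, is $\max\{\delta_{\mathcal D}(a_1,a_2)(b_{11}),\delta_{\mathcal D}(c_1,c_2)(b_{22})\}$. Your induction on the number of diagonal blocks is also fine, since property \eqref{propr3set} peels them off one at a time.
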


\begin{proof}
We shall prove this lemma in two steps. In the first step, we assume 
that $a=c$ (and implicitly $m=n$). Consider unitary
matrices $U,V^*\in\mathbb C^{k\times k}$ which diagonalize $Z$:
$UZV^*=\mathrm{diag}(\lambda_1,\dots,\lambda_k)$, where
$0\le\lambda_1\le\dots\le\lambda_k=\|Z^*Z\|^\frac12$ are
the singular values of $Z$. 
Then 
$$
\begin{bmatrix}
U\otimes 1 & 0\\
0 & V\otimes 1\end{bmatrix}
\begin{bmatrix}
I_k\otimes a & Z\otimes b\\
0 & I_k\otimes a\end{bmatrix}
\begin{bmatrix}
U^*\otimes 1 & 0\\
0 & V^*\otimes 1\end{bmatrix}=
\begin{bmatrix}
I_k\otimes a & UZV^*\otimes b\\
0 & I_k\otimes a\end{bmatrix}.
$$
Thus, by property \eqref{propr2set}, $\begin{bmatrix}
I_k\otimes a & sZ\otimes b\\
0 & I_k\otimes a\end{bmatrix}\in\mathcal D_{2kn}$ if and only if
the matrix
$$
\begin{bmatrix}
a & 0 & \cdots & 0 & s\lambda_1b & 0 & \cdots & 0\\
0 & a & \cdots & 0 & 0 & s\lambda_2b & \cdots & 0\\
\vdots & \vdots & \vdots & \vdots & \vdots & \vdots & \vdots & \vdots\\
0 & 0  & \cdots & a & 0 & 0 & \cdots & s\lambda_kb\\
0 & 0  & \cdots & 0 & a & 0 & \cdots & 0\\
0 & 0  & \cdots & 0 & 0 & a & \cdots & 0\\
\vdots & \vdots & \vdots & \vdots & \vdots & \vdots & \vdots & \vdots\\
0 & 0 & 0 & \cdots & 0 & 0 & \cdots & a
\end{bmatrix}\in\mathcal D_{2kn}.
$$
Successive permutations transform this into the condition
$$
\begin{bmatrix}
a & s\lambda_1b & \cdots & 0 & 0  & \cdots & 0 & 0\\
0 & a & \cdots & 0 & 0 & \cdots & 0 & 0\\
\vdots & \vdots & \vdots & \vdots & \vdots & \vdots & \vdots & \vdots\\
0 & 0  & \cdots & a & s\lambda_jb & \cdots & 0 & 0\\
0 & 0  & \cdots & 0 & a & \cdots & 0 & 0\\
\vdots & \vdots & \vdots & \vdots & \vdots & \vdots & \vdots & \vdots\\
0 & 0  & \cdots & 0 & 0 & \cdots & a & s\lambda_kb\\
0 & 0 & 0 & \cdots & 0 & \cdots & 0 & a
\end{bmatrix}\in\mathcal D_{2kn},
$$
i.e.
$$
\mathrm{diag}\left(
\begin{bmatrix}
a & s\lambda_1b\\
0 & a 
\end{bmatrix},\dots,
\begin{bmatrix}
a & s\lambda_kb\\
0 & a 
\end{bmatrix}
\right)\in\mathcal D_{2kn}.
$$
By property \eqref{propr3set} we have that this happens if
and only if each block 
$\begin{bmatrix}
a & s\lambda_jb\\
0 & a 
\end{bmatrix}
$ belongs to $\mathcal D_{2n}$. Since the largest singular
value $\lambda_k$ of $Z$
equals $\|Z^*Z\|^\frac12,$ the first step is proved.

In order to prove the second step, we use equation \eqref{counterdiagonal} 
of Lemma \ref{lemma3.3}, which guarantees that
$$
\delta_\mathcal D(I_k\otimes a,I_k\otimes c)(Z\otimes b)=
\delta_\mathcal D\left(\begin{bmatrix}
I_k\otimes a & 0\\
0 & I_k\otimes c 
\end{bmatrix},\begin{bmatrix}
I_k\otimes a & 0\\
0 & I_k\otimes c
\end{bmatrix}
\right)\left(\begin{bmatrix}
0 & Z\otimes b\\
0 & 0
\end{bmatrix}\right).
$$
By conjugating with a permutation matrix, it follows, again
via Lemma \ref{lemma3.3} and the first step, that 
\begin{eqnarray*}
\lefteqn{\delta_\mathcal D\left(\begin{bmatrix}
I_k\otimes a & 0\\
0 & I_k\otimes c 
\end{bmatrix},\begin{bmatrix}
I_k\otimes a & 0\\
0 & I_k\otimes c
\end{bmatrix}
\right)\left(\begin{bmatrix}
0 & Z\otimes b\\
0 & 0
\end{bmatrix}\right)}\\
& = & \delta_\mathcal D\left(I_k\otimes\begin{bmatrix}
a & 0\\
0 & c 
\end{bmatrix},I_k\otimes \begin{bmatrix}
a & 0\\
0 & c
\end{bmatrix}
\right)\left( Z\otimes\begin{bmatrix}
0 & b\\
0 & 0
\end{bmatrix}\right)\\
& = & \delta_\mathcal D\left(\begin{bmatrix}
a & 0\\
0 & c 
\end{bmatrix},\begin{bmatrix}
a & 0\\
0 & c
\end{bmatrix}
\right)\left(\begin{bmatrix}
0 & b\\
0 & 0
\end{bmatrix}\right)\|Z^*Z\|^\frac12\\
& = & \delta_\mathcal D\left(a,c
\right)(b)\|Z^*Z\|^\frac12.
\end{eqnarray*}
\end{proof}

With these two lemmas, we can prove now the main result of
this section. For simplicity, denote
$$
\tilde\delta_\mathcal D(a,c):=\delta_\mathcal D(a,c)(a-c).
$$

\begin{thm}\label{sep}
Assume that $\mathcal D\subseteq\mathcal V_{\rm nc}$ satisfies properties 
\eqref{propr2set} and \eqref{propr3set}. 
The following statements are equivalent for any $m,n\in\mathbb N$:
\begin{enumerate}
\item[(i)] For any $a\in\mathcal D_n,c\in\mathcal D_m$, $\delta_\mathcal D(a,c)(b)=0\implies b=0$;
\item[(ii)] For any $a\in\mathcal D_n$, $\delta_\mathcal D(a,a)(b)=0\implies b=0$;
\item[(iii)] For any $a,c\in\mathcal D_n$, $\tilde\delta_\mathcal D(a,c)=0\implies a=c$;
\item[(iv)] For any $k\in\mathbb N$, there exists no non-constant 
noncommutative function $f\colon\mathbb C_{\rm nc}\to(\mathcal D_k)_{\rm nc}$.
\end{enumerate}
By $(\mathcal D_k)_{\rm nc}$ we denote all the levels of $\mathcal D$
which are multiples of $k$.
\end{thm}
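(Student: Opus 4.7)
The plan is to close the cyclic chain (ii) $\Rightarrow$ (i) $\Rightarrow$ (iii) $\Rightarrow$ (iv) $\Rightarrow$ (ii); both (i) $\Rightarrow$ (ii) (set $c=a$) and (i) $\Rightarrow$ (iii) (set $b=a-c$) are tautologies, so only three substantive steps remain.

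For (ii) $\Rightarrow$ (i), I would specialize \eqref{counterdiagonal} with $a_1=a_2=a$, $c_1=c_2=c$, $b_{12}=b$, $b_{21}=0$. Since $\delta_\mathcal{D}(c,a)(0)=0$, the identity collapses to
\begin{equation*}
\delta_{\mathcal D}\!\left(a\oplus c,\, a\oplus c\right)\!\left(\begin{bmatrix} 0 & b \\ 0 & 0 \end{bmatrix}\right)=\delta_{\mathcal D}(a,c)(b),
\end{equation*}
and (ii), applied at the point $a\oplus c\in\mathcal D_{n+m}$, forces the antidiagonal matrix, hence $b$, to vanish.

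For (iii) $\Rightarrow$ (iv), I argue the contrapositive. Let $f\colon\mathbb C_{\rm nc}\to(\mathcal D_k)_{\rm nc}$ be an nc function with $f_n(X_1)\neq f_n(X_2)$ for some $X_1,X_2\in\mathbb C^{n\times n}$. Combining \eqref{FDQ} and \eqref{FDC} at level $2n$ gives
\begin{equation*}
f_{2n}\!\left(\begin{bmatrix} X_1 & X_1-X_2 \\ 0 & X_2 \end{bmatrix}\right)=\begin{bmatrix} f_n(X_1) & f_n(X_1)-f_n(X_2) \\ 0 & f_n(X_2) \end{bmatrix}\in\mathcal D_{2nk}.
\end{equation*}
Conjugating the source by $T=\mathrm{diag}(I_n,\alpha I_n)\in GL_{2n}(\mathbb C)$ rescales the off-diagonal block by $\alpha$, and the intertwining axiom rescales it the same way on the target side, so the displayed matrix remains in $\mathcal D_{2nk}$ for every $\alpha\in\mathbb C$. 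Hence $\tilde\delta_{\mathcal D}(f_n(X_1),f_n(X_2))=0$ at an inequal pair $(f_n(X_1),f_n(X_2))\in\mathcal D_{nk}$, negating (iii).

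For (iv) $\Rightarrow$ (ii), again by contrapositive: assume $\delta_{\mathcal D}(a,a)(b)=0$ with $a\in\mathcal D_n$ and $b\neq 0$. Put $k=2n$ and define $f_m\colon\mathbb C^{m\times m}\to\mathcal V^{2nm\times 2nm}$ by declaring that, under the identification $(\mathcal V^{2n\times 2n})^{m\times m}\cong\mathcal V^{2nm\times 2nm}$, the $(i,j)$-block of $f_m(Z)$ is $\bigl[\begin{smallmatrix} \delta_{ij}a & Z_{ij}b \\ 0 & \delta_{ij}a \end{smallmatrix}\bigr]$. A direct block-by-block computation confirms that $f=(f_m)_{m\in\mathbb N}$ respects direct sums and scalar intertwinings. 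For the membership $f_m(Z)\in\mathcal D_{2nm}$, note that after a perfect-shuffle permutation $f_m(Z)$ becomes unitarily equivalent to $\bigl[\begin{smallmatrix} I_m\otimes a & Z\otimes b \\ 0 & I_m\otimes a \end{smallmatrix}\bigr]$, and Lemma \ref{lemma3.4} gives $\delta_{\mathcal D}(I_m\otimes a,I_m\otimes a)(Z\otimes b)=\|ZZ^*\|^{1/2}\delta_{\mathcal D}(a,a)(b)=0$, so this matrix lies in $\mathcal D_{2nm}$; property \eqref{propr2set} transfers membership back to $f_m(Z)$. Since $f_1(0)\neq f_1(1)$, the function $f$ is a non-constant nc map $\mathbb C_{\rm nc}\to(\mathcal D_{2n})_{\rm nc}$, contradicting (iv) with $k=2n$. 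The main obstacle is precisely this last step: one must keep track of the reshuffling between the ``outer-$m$-block/inner-$2\times 2$'' view (demanded by the nc axioms on $f$) and the ``outer-$2\times 2$/inner-$m\times m$'' view (in which Lemma \ref{lemma3.4} is phrased), so that both the intertwining identity and the containment in $\mathcal D_{2nm}$ are visible on the same matrix.
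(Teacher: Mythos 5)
Your proof is correct and follows essentially the same route as the paper: (ii) $\Rightarrow$ (i) via the counterdiagonal formula of Lemma \ref{lemma3.3} evaluated at $b_{21}=0$, (iii) $\Rightarrow$ (iv) by the contraction property (you re-derive it directly from the scalar intertwining axiom rather than citing Proposition \ref{contr}, but it is the same idea), and (iv) $\Rightarrow$ (ii) by constructing the same linear nc map and invoking Lemma \ref{lemma3.4}; the shuffle permutation you describe is exactly the ``conjugation with a permutation matrix'' the paper alludes to.
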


\begin{proof}
Implications (i)$\implies$(ii) and (i)$\implies$(iii) are obvious.
If there exists a noncommutative $f\colon\mathbb C_{\rm nc}\to(\mathcal 
D_k)_{\rm nc}$ for some $k\in\mathbb N$, then, by Proposition
\ref{contr}, it follows that for any $n\in\mathbb N$, 
$Z,W\in\mathbb C^{n\times n}$, 
$\tilde\delta_\mathcal D(f(Z),f(W))\leq\tilde\delta_{\mathbb C_{\rm nc}}(Z,W)=0$.
Thus, if $f$ is not constant, (iii) is violated. Thus, 
(iii)$\implies$(iv).
The implication (ii)$\implies$(i) follows from equation 
\eqref{counterdiagonal} of Lemma \ref{lemma3.3}
by writing
$$
\delta_\mathcal D\left(\begin{bmatrix}
a & 0\\
0 & c 
\end{bmatrix},\begin{bmatrix}
a & 0\\
0 & c
\end{bmatrix}
\right)\left(\begin{bmatrix}
0 & b\\
0 & 0
\end{bmatrix}\right)= \delta_\mathcal D\left(a,c\right)(b).
$$
Finally, to prove (iv)$\implies$(ii), assume that we found $a_0\in
\mathcal D_{n_0}$ and $b_0\in\mathcal V^{n_0\times n_0}\setminus\{0\}$
such that $\delta_\mathcal D(a_0,a_0)(b_0)=0$.
We build the linear noncommutative function
$f(Z)=\begin{bmatrix}
I_p\otimes a_0 & 0\\
0 & I_p\otimes a_0
\end{bmatrix}+Z\otimes\begin{bmatrix}
0 & b_0\\
0 & 0
\end{bmatrix}$, $Z\in\mathbb C^{p\times p}$.
By a conjugation with a permutation matrix and an application of 
Lemma \ref{lemma3.4}, we conclude that $f$ takes values in
$\mathcal D_{2pn_0}$, so that (iv) does not hold. This completes
the proof.
\end{proof}

The function $\tilde\delta_\mathcal D$ allows us to define two distances
(possibly degenerate) on $\mathcal D$, by mimicking the
definition of the Kobayashi distance, with $\tilde\delta_\mathcal D$
playing the role of Lempert function. 
\begin{defn}\label{Kobayashi}
If $\mathcal D$ is a noncommutative set in an operator space 
satisfying assumptions \eqref{propr2set} and \eqref{propr3set},
then for any $n\in\mathbb N,a,c\in\mathcal D_n$,
$$
\tilde{d}_\mathcal D(a,c)=\inf\left\{\sum_{j=1}^N\tilde\delta_\mathcal D
(a_{j-1},a_j)\colon a_j\in\mathcal D_n,0\le j\le N,a_0=a,a_N=c,N\in
\mathbb N\right\}.
$$
Wel call such a finite sequence $a=a_0,a_1,\dots,a_N=c$
a {\em division} of $\tilde{d}_\mathcal D(a,c)$.
\end{defn}
The function $\tilde{d}_\mathcal D\colon\mathcal D\times\mathcal D\to[0,+
\infty]$ fails to separate the points of $\mathcal D$ if one (and hence all)
the conditions of Theorem \ref{sep} are satisfied.

It is quite easy to show that $\tilde{d}$ is a distance.
Indeed, since $\tilde\delta_\mathcal D(a,c)=\tilde\delta_\mathcal D(c,a)$,
it follows that $\tilde{d}_\mathcal D(a,c)=\tilde{d}_\mathcal D(c,a).$
So only the triangle inequality remains to be proved.
Let $a,c,v\in\mathcal D_n$. If 
$a_0=a,a_1,\dots,a_N=c$ and $a_{N}=c,\dots,a_{N+p-1},
a_{N+p}=v$ are divisions for $\tilde{d}_\mathcal D(a,c)$ and
$\tilde{d}_\mathcal D(c,v)$, respectively, then 
$a_0=a,a_1,\dots,a_N,a_{N+1},\dots,
a_{N+p}=v$ is a division for $\tilde{d}_\mathcal D(a,v)$.
In particular, 
\begin{eqnarray*}
\lefteqn{\sum_{j=1}^{N}\tilde\delta_\mathcal D
(a_{j-1},a_j)+\sum_{j=N+1}^{N+p}\tilde\delta_\mathcal D
(a_{j-1},a_j)}\\
& \ge & \inf\left\{\sum_{j=1}^M\tilde\delta_\mathcal D
(d_{j-1},d_j)\colon d_0,\dots,d_j\text{ division of }\tilde{d}_\mathcal D(a,v)
\right\}
\end{eqnarray*}
for all divisions $a_0=a,a_1,\dots,a_N=c$ for $\tilde{d}_\mathcal D(a,c)$
and $a_{N}=c,\dots,a_{N+p-1},a_{N+p}=v$ for
$\tilde{d}_\mathcal D(c,v)$. Taking infimum separately
after each division provides
$$
\tilde{d}_\mathcal D(a,c)+\tilde{d}_\mathcal D(c,v)\ge \tilde{d}_\mathcal D(a,v).
$$

A slight drawback of $\tilde{d}_\mathcal D$ is that it may depend to a 
certain extent on the level $n$ in which the points $a$ and $c$ live. 
The following modification of Definition \ref{Kobayashi} seems to be 
more natural in the noncommutative framework.
\begin{defn}\label{Kobayashi8}
If $\mathcal D$ is a noncommutative set in an operator space 
satisfying assumptions \eqref{propr2set} and \eqref{propr3set},
then for any $n\in\mathbb N,a,c\in\mathcal D_n$,
\begin{eqnarray*}
\lefteqn{\tilde{d}_{\mathcal D,\infty}(a,c)=}\\
& & \inf\left\{\sum_{j=1}^N\tilde\delta_\mathcal D
(a_{j-1},a_j)\colon a_j\in\mathcal D_{np},0\le j\le N,a_0=I_p\otimes a,a_N=I_p\otimes c,N,p\in
\mathbb N\right\}.
\end{eqnarray*}
\end{defn}
Clearly, according to Lemma \ref{lemma3.4}, $\tilde{d}_{\mathcal D,\infty}(a,c)
\leq\tilde{d}_{\mathcal D}(a,c)$. At this moment we do not know for what 
sets $\mathcal D$ the two distances are equal. It will be seen below that
they coincide on the unit ball of a $C^*$-algebra.

The most general version of the Schwarz-Pick Lemma tells us that an 
analytic map between two hyperbolic domains is a contraction with
respect to the corresponding Kobayashi metrics. The following
corollary is a direct consequence of Proposition \ref{contr} and
the above definition.

\begin{cor}
Let $\mathcal D,\mathcal E$ be two noncommutative 
sets satisfying assumptions \eqref{propr2set} and \eqref{propr3set}.
Let $f\colon\mathcal D\to\mathcal E$ be a noncommutative
function. Then $f$ is a contraction with respect to the above-defined
distances:
$$
\tilde{d}_{\mathcal E}(f(a),f(c))\leq\tilde{ d}_{\mathcal D}(a,c)\ \text{ and }\ 
\tilde{d}_{\mathcal E,\infty}(f(a),f(c))\leq\tilde{d}_{\mathcal D,\infty}(a,c),\quad
a,c\in\mathcal D_n, n\in\mathbb N.
$$
\end{cor}
Note that assuming also hypothesis \eqref{propr1set} in the above
corollary guarantees that the two sides of the inequality above are 
both finite (possibly zero).

Until now we have made no assumptions on the openness of $\mathcal D$.
As seen in Remark \ref{rem-cont}, hypotheses
\eqref{propr1set} --- \eqref{propr3set} guarantee 
that $\delta_\mathcal D$ is upper semicontinuous in its three variables,
and in particular so is $\tilde{\delta}_\mathcal D$. 
Thus, we may define infinitesimal versions of $\tilde{d}_\mathcal D$ and $\tilde{d}_{\mathcal D,\infty}$. 
\begin{defn}\label{inf-Kobayashi}
If $\mathcal D$ is a noncommutative set in an operator space 
satisfying assumptions \eqref{propr1set}---\eqref{propr3set},
then for any $n\in\mathbb N,a,c\in\mathcal D_n$,
\begin{eqnarray*}
\lefteqn{
{d}_\mathcal D(a,c)=\inf\left\{\int_{[0,1]}\delta({\bf a}(t),{\bf a}
(t))({\bf a}'(t))\,{\rm d}t:\right.}\\
& & \left.\frac{}{}{\bf a}\colon[0,1]\to\mathcal D_n\text{ continuously
differentiable, }{\bf a}(0)=a,{\bf a}(1)=c
\right\}.
\end{eqnarray*}
Similarly,
\begin{eqnarray*}
\lefteqn{
{d}_{\mathcal D,\infty}(a,c)=\inf\left\{\int_{[0,1]}\delta({\bf a}(t),{\bf a}
(t))({\bf a}'(t))\,{\rm d}t:\right.}\\
& & \left.p\in\mathbb N,\frac{}{}{\bf a}\colon[0,1]\to\mathcal D_{np}\text{ continuously
differentiable, }{\bf a}(0)=I_p\otimes a,{\bf a}(1)=I_p\otimes c
\right\}.
\end{eqnarray*}
\end{defn}
The openness of $\mathcal D_n$ implies that $\delta({\bf a}(t),{\bf a}
(t))({\bf a}'(t))$ is finite for all $t\in[0,1]$. Since an upper semicontinuous function
attains its supremum on a compact set, it follows that the set $\{\delta({\bf a}(t),{\bf a}
(t))({\bf a}'(t))\colon t\in[0,1]\}$ is bounded. Thus, the integrals defining $d_\mathcal D$
are finite, so that $d_\mathcal D$ is well-defined and finite (possibly zero).
Clearly ${d}_\mathcal D(a,c)\geq{d}_{\mathcal D,\infty}(a,c)$. 
Both $d_\mathcal D$ and ${d}_{\mathcal D,\infty}$ are (possibly degenerate) distances: as before,
it is only the triangle inequality that needs to be verified. If $a,v,c\in\mathcal D_n$, then
the above infimum over all paths from $a$ to $c$ is necessarily no greater than the
infimum over all paths from $a$ to $c$ which go through $v$. Since $\delta_\mathcal D$
is continuous and paths which are continuous and differentiable everywhere except at one point
can be approximated arbitrarily well by paths which are differentiable everywhere, it follows 
that ${d}_\mathcal D(a,c)\leq{d}_\mathcal D(a,v)+{d}_\mathcal D(v,c)$, and the same for 
${d}_{\mathcal D,\infty}$.

Another application of Proposition \ref{contr} shows that noncommutative functions are contractions also 
with respect to ${d}_\mathcal D$ and ${d}_{\mathcal D,\infty}$. We record this fact below.

\begin{cor}
Let $\mathcal D,\mathcal E$ be two noncommutative 
sets satisfying assumptions \eqref{propr1set}---\eqref{propr3set}.
Let $f\colon\mathcal D\to\mathcal E$ be a noncommutative
function. Then $f$ is a contraction with respect to the above-defined
distances:
$$
{d}_{\mathcal E}(f(a),f(c))\leq{ d}_{\mathcal D}(a,c)\ \text{ and }\ 
{d}_{\mathcal E,\infty}(f(a),f(c))\leq{ d}_{\mathcal D,\infty}(a,c),\quad
a,c\in\mathcal D_n, n\in\mathbb N.
$$
\end{cor}

We establish next the relation between $\tilde{d}_\mathcal D$ and $d_\mathcal D$ under the assumptions
\eqref{propr1set} --- \eqref{propr3set}. As a direct consequence of the upper semicontinuity of 
$\delta$ (Remark \ref{rem-cont}), we obtain for any differentiable path ${\bf a}$ defined on $[0,1]$
and any $t\in[0,1]$ the relation 
$$
\limsup_{h\to0}\delta_\mathcal D({\bf a}(t),{\bf a}(t+h))\left(\frac{{\bf a}(t+h)-{\bf  a}(t)}{h}\right)\leq
\delta_\mathcal D({\bf a}(t),{\bf a}(t))({\bf a}'(t)).
$$
(When $t=0$ or $t=1$, the limit should of course be taken one-sided.) 
In particular given an arbitrary path ${\bf a}$, a division of
$[0,1]$ translates into a division of $\tilde{d}(a,c)$. Given $\varepsilon>0$,
for any $t\in[0,1]$ there exists $\eta_{t,\varepsilon}>0$ such that 
$\delta_\mathcal D({\bf a}(t),{\bf a}(t+h))\left(\frac{{\bf a}(t+h)-{\bf  a}(t)}{h}\right)<
\delta_\mathcal D({\bf a}(t),{\bf a}(t))({\bf a}'(t))+\varepsilon$ for any $|h|<\eta_{t,\varepsilon}$.
The family $\{(t-\eta_{t,\varepsilon},t+\eta_{t,\varepsilon})\}_{0\le t\le1}$
is an open cover of $[0,1]$, so that we may extract a finite subcover
$(t_1-\eta_{t_1,\varepsilon},t_1+\eta_{t_1,\varepsilon}),\dots,
(t_N-\eta_{t_N,\varepsilon},t_N+\eta_{t_N,\varepsilon})$, $t_1<\cdots<t_N$. Let 
$t_0=0,t_{N+1}=1$.  
By choosing the smallest among $\eta_{t_j,\varepsilon}$, $1\le j\le N$, and increasing the number of 
points $t_j$ if necessary, we may assume that $\eta_{t_1,\varepsilon}=\cdots=\eta_{t_N,\varepsilon}
=\eta_\varepsilon>0$ and $t_j\in(t_{j-1}-\eta_\varepsilon,t_{j-1}+\eta_\varepsilon)\cap
(t_{j+1}-\eta_\varepsilon,t_{j+1}+\eta_\varepsilon)$. Then
\begin{eqnarray}
\tilde{d}_{\mathcal D}(a,c)& \leq & \sum_{j=0}^N\tilde{\delta}_{\mathcal D}({\bf a}(t_j),{\bf a}(t_{j+1}))
\label{17}\\
& = & \sum_{j=0}^N(t_{j+1}-t_j)\nonumber
{\delta}_{\mathcal D}({\bf a}(t_j),{\bf a}(t_{j+1}))\left(\frac{{\bf a}(t_{j+1})-{\bf a}(t_j)}{t_{j+1}-t_j}\right)\\
& < & \sum_{j=0}^N(t_{j+1}-t_j)\delta_\mathcal D({\bf a}(s_j),{\bf a}(s_j))({\bf a}'(s_j))+\varepsilon\quad
\ (s_j\in[t_j,t_{j+1}])\nonumber\\
& \le & \sum_{j=0}^N(t_{j+1}-t_j)\int_{[t_j,t_{j+1}]}\delta_\mathcal D({\bf a}(t),{\bf a}(t))({\bf a}'(t))\,{\rm 
d}t+\varepsilon\label{18}\\
& = & \int_{[0,1]}\delta({\bf a}(t),{\bf a}
(t))({\bf a}'(t))\,{\rm d}t+\varepsilon.\nonumber
\end{eqnarray}
We have used in \eqref{17} the definition of $\tilde{d}_\mathcal D$, and in relation \eqref{18}
the fact that we may choose $s_j$ arbitrarily in $[t_j,t_{j+1}]$, and we decide to choose 
an $s_j$ such that 
$$
\delta_\mathcal D({\bf a}(s_j),{\bf a}(s_j))({\bf a}'(s_j))\leq 
\frac{1}{t_{j+1}-t_j}\int_{[t_j,t_{j+1}]}\delta_\mathcal D({\bf a}(t),{\bf a}(t))({\bf a}'(t))\,{\rm 
d}t.
$$
Since ${\bf a}$ has been arbitrarily chosen, it follows that $\tilde{d}_\mathcal D(a,c)\leq
d_\mathcal D(a,c)$ for all $a,c$ belonging to the same level of $\mathcal D$. Thus,
\begin{equation}\label{ds}
\tilde{d}_\mathcal D\leq
d_\mathcal D\quad \text{for all }\mathcal D\subset\mathcal V_{\rm nc}\text{ satisfying hypotheses }
\eqref{propr1set} - \eqref{propr3set}.
\end{equation}
All of the above arguments hold when $\tilde{d}_\mathcal D$ is replaced by 
$\tilde{d}_{\mathcal D,\infty}$ and ${d}_\mathcal D$ by ${d}_{\mathcal D,\infty}$. We
record the result below:
\begin{equation}\label{ds8}
\tilde{d}_{\mathcal D,\infty}\leq
d_{\mathcal D,\infty}\quad \text{for all }\mathcal D\subset\mathcal V_{\rm nc}\text{ satisfying hypotheses }
\eqref{propr1set} - \eqref{propr3set}.
\end{equation}

Since we have shown that $\delta$ and $\tilde\delta$ generate distances, it is 
natural to ask what topology one may expect those distances to determine on the original space.
In the most general case, we are able to make only the following statement:

\begin{prop}\label{dreiwolf}
Assume that $\mathcal D$ is a noncommutative subset of a topological vector space
$\mathcal V$ which satisfies assumptions \eqref{propr1set}---\eqref{propr3set}.  
If $n\in\mathbb N$ is
given and a subset $A$ of $\mathcal D_n$ is open in the topology generated by $\tilde{d}_\mathcal D$,
then it is open in the product topology induced by $\mathcal V$ on $\mathcal V^{n\times n}$.
\end{prop}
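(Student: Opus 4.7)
The plan is to show that the $\tilde d_\mathcal D$-topology on $\mathcal D_n$ is coarser than the product topology, or equivalently, that for every $a \in \mathcal D_n$ and every $r>0$ the ball $\{c\in\mathcal D_n:\tilde d_\mathcal D(a,c)<r\}$ contains a product-open neighbourhood of $a$. Since $\tilde d_\mathcal D(a,c)$ is defined as an infimum over divisions, taking the trivial one-step division $a,c$ yields $\tilde d_\mathcal D(a,c)\leq\tilde\delta_\mathcal D(a,c)=\delta_\mathcal D(a,c)(a-c)$, so it suffices to prove that $\delta_\mathcal D(a,c)(a-c)\to0$ as $c\to a$ in the product topology inherited from $\mathcal V^{n\times n}$.

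First I compute $\delta_\mathcal D(a,a)(0)$. The block matrix $\bigl[\begin{smallmatrix} a & s\cdot 0 \\ 0 & a\end{smallmatrix}\bigr]=a\oplus a$ lies in $\mathcal D_{2n}$ for every $s\in[0,+\infty)$ by the direct sum axiom (b) for nc sets, so the supremum in \eqref{InfDistance1} is $+\infty$, giving $\delta_\mathcal D(a,a)(0)=0$. This algebraic identity is what will convert openness of $\mathcal D_{2n}$ near the diagonal into smallness of $\tilde\delta_\mathcal D$.

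Next I invoke the upper semicontinuity of $\delta_\mathcal D$ in all three variables from Remark \ref{rem-cont}. Although the argument there is written out for $a,c\in\Omega_1$ and $b\in\mathcal V$, the proof is purely topological, uses only property \eqref{propr1set} and the compactness of $[0,t]$, and transfers verbatim to arbitrary levels: given a net $c_\iota\to a$ in the product topology on $\mathcal V^{n\times n}$, set $b_\iota=a-c_\iota\to0$; for any $t>0$, the point $\bigl[\begin{smallmatrix} a & s\cdot 0\\ 0 & a\end{smallmatrix}\bigr]$ lies in the open set $\mathcal D_{2n}$ for all $s\in[0,t]$, so by openness of $\mathcal D_{2n}$ and compactness of $[0,t]$, one has $\bigl[\begin{smallmatrix} a & sb_\iota\\ 0 & c_\iota\end{smallmatrix}\bigr]\in\mathcal D_{2n}$ for every $s\in[0,t]$ once $\iota$ is sufficiently large. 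This yields $\limsup_\iota\delta_\mathcal D(a,c_\iota)(a-c_\iota)\leq t^{-1\cdot(-1)}$... more precisely, $\delta_\mathcal D(a,c_\iota)(a-c_\iota)\leq 1/t$ eventually, and letting $t\to+\infty$ gives $\delta_\mathcal D(a,c_\iota)(a-c_\iota)\to 0$.

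Combining with $\tilde d_\mathcal D(a,c_\iota)\leq\delta_\mathcal D(a,c_\iota)(a-c_\iota)$ produces $\tilde d_\mathcal D(a,c_\iota)\to0$, so every $\tilde d_\mathcal D$-ball around $a$ eventually contains the net, hence contains a product-neighbourhood of $a$ in $\mathcal D_n$ (using \eqref{propr1set} to stay inside $\mathcal D_n$). The main subtlety is the level-independence of Remark \ref{rem-cont}, which needs property \eqref{propr1set} at level $2n$ rather than $2$; apart from this verification the argument reduces to the identity $\delta_\mathcal D(a,a)(0)=0$ combined with upper semicontinuity, and no further structure on $\mathcal V$ is used.
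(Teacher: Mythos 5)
Your argument is correct and is essentially the paper's own proof: both rest on the chain $\tilde d_\mathcal D(a,c)\le\tilde\delta_\mathcal D(a,c)$, the identity $\tilde\delta_\mathcal D(a,a)=\delta_\mathcal D(a,a)(0)=0$, and the upper semicontinuity of $\delta_\mathcal D$ from Remark \ref{rem-cont} to force $\tilde d_\mathcal D(a,c_\iota)\to 0$ along any net $c_\iota\to a$ in the product topology. You merely spell out the semicontinuity argument at level $2n$ rather than citing the Remark directly, which is a reasonable precaution but not a different proof.
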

\begin{proof}
Assume that $a\in\mathcal D_n$ is given. For any net $\{a_\iota\}_{\iota\in I}\subseteq\mathcal D_n$
which converges to $a$ in the product topology of $\mathcal V^{n\times n}$, we have by Remark 
\ref{rem-cont} that
$$
0=\tilde\delta_\mathcal D(a,a)\geq\limsup_{\iota\in I}\tilde\delta_\mathcal D(a_\iota,a)
\geq\limsup_{\iota\in I}\tilde{d}_\mathcal D(a_\iota,a)\ge0.
$$
Thus, $\lim_{\iota\in I}\tilde{d}_\mathcal D(a_\iota,a)=0$ whenever $\{a_\iota\}_{\iota\in I}\subseteq
\mathcal D_n$ converges to $a$ in the product topology of $\mathcal V^{n\times n}$. This completes
our proof.
\end{proof}



\section{A smooth (pseudo)metric}
\label{sec:four}

We have seen above that simple properties of noncommutative
sets allow us to define a distance which is often nondegenerate,
and with respect to which analytic noncommutative functions
are natural contractions. These results have a ``metric space''
flavour. In this section we consider the case when the distance
defined has a ``differential geometry'' flavour.

\subsection{Hypotheses}\label{SecHyp}
Let $\mathcal V$ be an operator space and $\mathcal{J,K}$ be
${C}^*$-algebras. Let $\mathcal O_{\rm nc}
\subseteq\mathcal V_{\rm nc}$ be a noncommutative set,
and assume that $G\colon\mathcal O_{\rm nc}\times\mathcal O_{\rm nc}
\to\mathcal L(\mathcal J , \mathcal K)$ is an affine noncommutative
kernel. Recall that if $(a,c)\mapsto G(a,c)$ is an affine nc kernel,
then $(a,c)\mapsto G(a,c^*)$ is an nc kernel. We prefer to work 
with the affine kernel $G$ because we will often need to take its
derivative (or, rather, difference-differential) on both the first and second coordinate
(which we denote by ${}_0\Delta G(a;a',c)$ and ${}_1\Delta G(a,c;c')$,
respectively). Consider the following properties:
\begin{enumerate}
\item $\mathcal O_{\rm nc}$ is uniformly open and $G$ is locally uniformly
bounded. Thus, $G$ is uniformly analytic in each of its two variables.

\item $\mathcal O_{\rm nc}$ is finitely open and $G$ is locally
bounded on slices. Thus, $G$ is analytic on slices in each of its two variables.

\item  $\mathcal O_{\rm nc}$ is open in the level topology and $G$ is locally
bounded on slices. Thus, $G$ is analytic on slices in each of its two variables.

\item For any $n\in\mathbb N$ and $a,c
\in\mathcal O_n$ such that $a^*,c^*\in\mathcal O_n$, we have 
$G(a,c)(v)^*=G(c^*,a^*)(v)$ for $v=v^*\in\mathcal J^{n\times n}$;

\item $\{a\in \mathcal O_{\rm nc}\colon G(a,a^*)(1)>0\}\neq
\varnothing.$ 

\item At each level at which the set $\{a\in \mathcal O_{\rm nc}
\colon G(a,a^*)(1)>0\}$ is nonempty, we have $\|G(a,a^*)(1)^{-1}\|\to+
\infty$ as $a$ tends to the norm-topology boundary of $\{a\in 
\mathcal O_{\rm nc}\colon G(a,a^*)(1)>0\}$.

\item Let $\Omega$ be a connected component of $\{a\in\mathcal O_{\rm 
nc}\colon G(a,a^*)(1)>0\}$. For any given $a\in\Omega_{n}$, $c\in
\overline{\Omega}_n$, we have $G(a,c^*)(1)$ invertible as an element
in the ${C}^*$-algebra $\mathcal K^{n\times n}$.

\item The function $G$ is analytic on a neighbourhood of $\Omega_n
\times\Omega_n$ for each $n\in\mathbb N$.
\end{enumerate}

In our results below, we will assume various subsets of the
above hypotheses. We would like to emphasize at this moment already
that they are not very restrictive, and important families of kernels 
satisfy all of them.

Pick a point $a_0\in\{a\in\mathcal O_{\rm nc}\colon G(a,a^*)(1)>0\}$
at the first nonempty level. Let $\mathcal D_{G,\rm nc}$ be the
connected component of $a_0$ (i.e. at each multiple $k$ of the level
in which $a_0$ occurs, we consider the connected component of 
$a_0\otimes 1_k$). In all applications we are currently aware of,
the set $\mathcal O_{\rm nc}$ is considerably bigger than 
$\mathcal D_{G,\rm nc}$. It seems in fact that at the present level
of knowledge in this field, analyticity of $G$ on the boundary of
$\mathcal D_{G,\rm nc}$ is necessary in order to obtain powerful
results about arbitrary functions defined on it. Given the case of
single-variable analytic functions, that is probably not so 
surprising. However, for the purposes of the next section, this 
hypothesis is not needed.

We would like to emphasize that if $G(a,a^*)$ is completely 
positive, then the condition $G(a,a^*)(1)>0$ can be replaced by 
the condition $G(a,a^*)(x)>0$ for any $x>0$.
Indeed, one implication is obvious. Conversely, if $x>0$, then it
is invertible and $x\ge\|x^{-1}\|^{-1}1>0$, so that 
$0<\|x^{-1}\|^{-1}G(a,a^*)(1)\le G(a,a^*)(x)$. 
For our purposes, completely positive kernels are ``bad'': 
they generate a degenerate pseudometric. However,
in the following, to the extent possible, we shall perform our 
computations in such a way as to be able to draw conclusions for both 
the case $G(a,a^*)$ completely positive and $G(a,a^*)(1)>0$ (without 
the assumption that $G(a,a^*)$ is positive).

\subsection{The smooth pseudometric}
The following proposition gives a noncommutative version of 
a hyperbolic pseudometric. This version is given in terms of the 
defining functions of the domains in question and its definition
is purely algebraic. It is clear that noncommutative domains admit
hyperbolic pseudometrics level-by-level. However, there would be
apropri no reason to think that they are related to the pseudometric
we define here. We will see later that in some cases our pseudometric
indeed generates the Kobayashi metric, while in others it does not.
As in Theorem \ref{sep}, and as in the classical theory of several 
complex variables, for the pseudometric to be nondegenerate, 
it is necessary that the domains do not contain holomorphic 
images of complex lines (i.e. copies of $\mathbb C$) at any level.

Consider $G$ satisfying properties [(1), (2) or (3)], (4), and (5), and define 
$\mathcal D_{G,\rm nc}$ as above. Without loss of generality, we assume 
that $\mathcal D_{G,1}\neq\varnothing.$ Recall that the spectrum of 
an operator $V$ on a Hilbert space is denoted by $\sigma(V)$. For 
$a\in\mathcal D_{G,n},c\in\mathcal D_{G,m},b\in\mathcal 
V^{n\times m}$, we have 
\begin{eqnarray}
\lefteqn{\delta_{\mathcal D_{G,\rm nc}}(a,c)(b)}\nonumber\\
& = & \max\left\{0,\ \sup\sigma\left(G(a,a^*)(1)^{-1/2}\left[
{}_0\Delta G(a;c,c^*)(b,1)
\left[G(c,c^*)(1)\right]^{-1}\right.\right.\right.\nonumber\\
& & \left.\left.\left.\frac{}{}\mbox{}\times{}_1\Delta G(c,c^*;a^*)
(1,b^*)-{}_1\Delta {}_0\Delta G(a;c,c^*;a^*)(b,1,b^*)
\right]G(a,a^*)(1)^{-1/2}\right)\right\}^\frac12,
\label{Infinitesimal}
\end{eqnarray}
and, when $m=n$, 
\begin{equation}
\label{Distance}
\tilde\delta_{\mathcal D_{G,\rm nc}}(a,c)=\delta_{\mathcal D_{G,\rm nc}}
(a,c)(a-c).
\end{equation}
It will be seen below that 
\begin{eqnarray*}
\tilde\delta_{\mathcal D_{G,\rm nc}}(a,c) & = & \max\left\{0,\ 
\sup\sigma\left(G(a,a^*)(1)^{-\frac12}G(a,c^*)
(1)G(c,c^*)(1)^{-1}\right.\right.\\
& & \left.\left.\mbox{}\times
G(c,a^*)(1)G(a,a^*)(1)^{-\frac12}-1\right)
\right\}^\frac12.\nonumber
\end{eqnarray*}
It will be apparent that these two objects coincide 
with the ones defined in Section \ref{pseudodistance}
for the particular case of domains defined via 
inequalities of the type described in hypothesis
(5) above.

Consider another function $H$ defined on some 
noncommutative subset of $\mathcal W_{\rm nc}$,
which satisfies the same properties as $G$. We
define $\mathcal D_{H,\rm nc}$ the same
way as $\mathcal D_{G,\rm nc}$

\begin{prop}\label{prop:3.1}
Let $\mathcal D_{G,\rm nc},\mathcal D_{H,\rm nc}$ be two
domains defined as above. Let $f\colon\mathcal D_{G,\rm nc}\to
\mathcal D_{H,\rm nc}$ be a noncommutative map. 
For any $n,m\in\mathbb N$, $a\in
\mathcal D_{G,n},c\in\mathcal D_{G,m},b\in\mathcal 
V^{n\times m}$, we have
\begin{equation}\label{deriv}
\delta_{\mathcal D_{H,\rm nc}}(f(a),f(c))(\Delta f(a,c)(b))
\leq\delta_{\mathcal D_{G,\rm nc}}(a,c)(b).
\end{equation}
If $m=n$, then
$$
\tilde\delta_{\mathcal D_{H,\rm nc}}(f(a),f(c))
\leq\tilde\delta_{\mathcal D_{G,\rm nc}}(a,c).
$$
and
\begin{eqnarray}
\lefteqn{\left\|H(f(a),f(a)^*)(1)^{-\frac12}
H(f(a),f(c)^*)(1)H(f(c),f(c)^*)(1)^{-1}H(f(c),f(a)^*)(1)\right.}
\nonumber\\
& & \left.\mbox{}\times H(f(a),f(a)^*)(1)^{-\frac12}\right\|\nonumber\\
& \leq & \left\|G(a,a^*)(1)^{-\frac12}G(a,c^*)
(1)G(c,c^*)(1)^{-1}G(c,a^*)(1)G(a,a^*)(1)^{-\frac12}\right\|
\label{basic3}.
\end{eqnarray}
In addition,
\begin{eqnarray}
\lefteqn{H(f(a),f(c)^*)(1)H(f(c),
f(c)^*)(1)^{-1}H(f(c),f(a)^*)(1)-H(f(a),f(a)^*)(1)}\nonumber\\
& \leq & H(f(a),f(a)^*)(1)\times\nonumber\\
& & \mbox{}\left\|G(a,a^*)(1)^{-\frac12}G(a,c^*)
(1)G(c,c^*)(1)^{-1}G(c,a^*)(1)G(a,a^*)(1)^{-\frac12}-1\right\|
\label{basic2}.
\end{eqnarray}
\end{prop}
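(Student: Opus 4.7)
The first inequality \eqref{deriv} is a direct application of Proposition \ref{contr} to the noncommutative map $f\colon\mathcal D_{G,\rm nc}\to\mathcal D_{H,\rm nc}$. Its three hypotheses are automatic: (a) $f$ sends $\mathcal D_{G,n}$ into $\mathcal D_{H,n}$ by assumption; (b) $f$ respects direct sums by the first axiom defining nc functions; and (c) the block-triangular identity \eqref{FDQ} supplies $\Delta f(a,c)(b)$ with the required homogeneity in $b$ (indeed $\mathbb C$-linearity). Specializing $b=a-c$ in \eqref{deriv} and invoking $\Delta f(a,c)(a-c)=f(a)-f(c)$ from \eqref{FDC} then yields $\tilde\delta_{\mathcal D_{H,\rm nc}}(f(a),f(c))\le\tilde\delta_{\mathcal D_{G,\rm nc}}(a,c)$ when $m=n$.

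For the two closed-form inequalities \eqref{basic3} and \eqref{basic2}, the strategy is to push this contraction through the explicit spectral formula for $\tilde\delta$ displayed just after \eqref{Distance}. Abbreviate
\[
X_G:=G(a,a^*)(1)^{-\frac12}G(a,c^*)(1)G(c,c^*)(1)^{-1}G(c,a^*)(1)G(a,a^*)(1)^{-\frac12},
\]
and let $X_H$ be the analogous quantity built from $H$ and $f(a),f(c)$. Property (4) of Section \ref{SecHyp} gives $G(c,a^*)(1)=G(a,c^*)(1)^*$, so $X_G=MM^*\ge 0$ for $M=G(a,a^*)(1)^{-\frac12}G(a,c^*)(1)G(c,c^*)(1)^{-\frac12}$. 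Positivity of $X_G$ permits replacing $\sup\sigma(X-I)$ by $\|X\|-1$ in the spectral formula, and the contraction $\tilde\delta_H\le\tilde\delta_G$ now reads $\max\{0,\|X_H\|-1\}\le\max\{0,\|X_G\|-1\}$.

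The operator inequality \eqref{basic2} is extracted from the chain
\[
\sup\sigma(X_H-I)=\|X_H\|-1\le\max\{0,\|X_H\|-1\}\le\max\{0,\|X_G\|-1\}\le\|X_G-I\|,
\]
the last step being the elementary bound $\|X_G\|\le 1+\|X_G-I\|$. Hence $X_H-I\le\|X_G-I\|\cdot I$ in the operator order; conjugating by $H(f(a),f(a)^*)(1)^{1/2}>0$ reproduces \eqref{basic2}. For \eqref{basic3} the same chain, combined with the fact that $X\ge I$ holds in this setting (a Cauchy--Schwarz-type consequence of the kernel axioms and of the defining positivity $G(\cdot,\cdot^*)(1)>0$, readily verified on the model case of the unit ball kernel $1-aPc$), yields $\|X_H\|-1\le\|X_G\|-1$, hence \eqref{basic3}.

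In my view the main obstacle is not any of these four inequalities, which are spectral repackagings of Proposition \ref{contr}, but rather the verification of the underlying formula \eqref{Infinitesimal}. That requires expanding $G(d,d^*)(1_{n+m})$ for $d=\begin{bmatrix} a & sb \\ 0 & c \end{bmatrix}$ via the block-decomposition rule for affine nc kernels together with the mixed difference-differentials ${}_0\Delta G(a;c,c^*)$, ${}_1\Delta G(c,c^*;a^*)$ and ${}_1\Delta{}_0\Delta G(a;c,c^*;a^*)$, and then identifying the exact threshold in $s$ at which positivity is lost. Once \eqref{Infinitesimal} and its $b=a-c$ specialization \eqref{Distance} are in hand, the four inequalities follow by the algebra sketched above.
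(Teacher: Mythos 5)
Your derivation of \eqref{deriv} and of the contraction $\tilde\delta_{\mathcal D_{H,\rm nc}}(f(a),f(c))\le\tilde\delta_{\mathcal D_{G,\rm nc}}(a,c)$ is correct and matches the paper's: both pass through Proposition \ref{contr} once formula \eqref{Infinitesimal} is available. Your treatment of \eqref{basic2} through the chain
$\sup\sigma(X_H-I)=\|X_H\|-1\le\max\{0,\|X_H\|-1\}\le\max\{0,\|X_G\|-1\}\le\|X_G-I\|$,
followed by conjugation with $H(f(a),f(a)^*)(1)^{1/2}$, is valid and is in fact a slightly more direct spectral repackaging than the paper's state-wise argument; the observation $X_G=MM^*\ge0$ with $M=G(a,a^*)(1)^{-1/2}G(a,c^*)(1)G(c,c^*)(1)^{-1/2}$, which uses hypothesis (4), is the same key point the paper relies on.

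The argument for \eqref{basic3}, however, has a genuine gap. You invoke ``$X\ge I$ holds in this setting'' as if it were a consequence of the kernel axioms, but this operator inequality is precisely the \emph{additional} hypothesis introduced in Remark \ref{rmk3.3}, and Remark \ref{generalizeds} verifies it only for the specific classes of generalized half-planes and balls; it is not established for general kernels satisfying the hypotheses of Section \ref{SecHyp}. What the paper \emph{does} prove (in the remark following the proof) is the weaker negative statement that $X_G\le I$ cannot hold when $\mathcal D_{G,\rm nc}$ contains no complex lines; this does not give $X_G\ge I$, since $X_G-I$ may well be indefinite. Note also that for a completely positive kernel the Cauchy--Schwarz inequality runs the other way, $X_G\le I$, so your appeal to ``a Cauchy--Schwarz-type consequence'' points in the wrong direction. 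The paper's proof of \eqref{basic3} uses only $X_G,X_H\ge0$ together with the non-degeneracy $\tilde\delta_G(a,c)>0$ (implicit in the reduction ``if $\varepsilon_0=+\infty$, there is nothing to prove''), which allows one to pass from a state-wise inequality $\psi(X_H)-\varepsilon\le\varphi(X_G)$ directly to $\|X_H\|\le\|X_G\|$ without assuming $X_G\ge I$. Your argument can be repaired by replacing the unjustified $X\ge I$ with the available fact that $\|X_G\|\ge 1$ whenever $a\ne c$ (the content of the no-complex-lines remark) and handling $\|X_H\|<1$ by the trivial bound $\|X_H\|<1\le\|X_G\|$.

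Finally, as you yourself flag, the bulk of the paper's proof is the verification that the general definition \eqref{InfDistance1} specializes to the spectral formula \eqref{Infinitesimal} via the block-matrix positivity criterion and the difference-differential expansion. You describe what has to be done accurately but do not carry it out; without that computation, the ``spectral repackagings'' have nothing to package.
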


\begin{remark}\label{rmk3.3}
If in addition $H(u,v^*)(1)H(v,v^*)(1)^{-1}H(v,u^*)(1)-H(u,u^*)
(1)\ge0$ for all $u,v\in\mathcal D_{H,{\rm nc}}$, then
relation \eqref{basic2} is equivalent to
\begin{eqnarray}
\lefteqn{\left\|H(f(a),f(a)^*)(1)^{-\frac12}H(f(a),f(c)^*)(1)
\right.}\nonumber\\
& & \left.\mbox{}\times H(f(c),f(c)^*)(1)^{-1}H(f(c),f(a)^*)(1)
H(f(a),f(a)^*)(1)^{-\frac12}-1\right\|\label{basicnorm}\\
& \leq & \mbox{}\left\|G(a,a^*)(1)^{-\frac12}G(a,c^*)(1)
G(c,c^*)(1)^{-1}G(c,a^*)(1)G(a,a^*)(1)^{-\frac12}-1\right\|
.\nonumber
\end{eqnarray}
\end{remark}

\begin{remark}\label{generalizeds}
The condition $H(u,v^*)(1)H(v,v^*)(1)^{-1}H(v,u^*)(1)-H(u,u^*)
(1)\ge0$ for all $u,v\in\mathcal D_{H,{\rm nc}}$ is satisfied by 
a large and important class of noncommutative domains. In 
particular, it is satisfied by generalized noncommutative 
half-planes and generalized noncommutative balls. Indeed, 
consider the upper half-plane from Example \ref{kernel-ex}(i),
$H^+(\mathcal A^{n\times n})=\{b\in\mathcal A^{n\times n}
\colon\Im b>0\}$. It is given by the affine kernel 
$H(a,c)(P)=(2i)^{-1}(aP-Pc)$. Then the inequality reduces to
$$
\frac{a-c^*}{2i}\left(\frac{c-c^*}{2i}\right)^{-1}\frac{c-a^*}{2i}-
\frac{a-a^*}{2i}\ge0.
$$
But 
$$
\frac{a-c^*}{2i}\left(\frac{c-c^*}{2i}\right)^{-1}\frac{c-a^*}{2i}-
\frac{a-a^*}{2i}=\frac14(a-c)\left(\frac{c-c^*}{2i}\right)^{-1}(a-c)^*
\ge0
$$
whenever $a\neq c$ in the upper half-plane. One can generalize this
to kernels of the form $H(a,c)(P)=(2i)^{-1}(h(a)P-Ph(c^*)^*)$, for
some noncommutative function $h$. 

A better-known class of kernels is given by the formula 
$H(a,c)(P)=1-h(a)Ph(c^*)^*$ with $h$ a noncommutative function.
Then $H(a,c^*)(1)=1-h(a)h(c)^*$, so from the point of view of
the above inequality it is enough to consider the case when $h$
is the identity function. If $H(a,c)(P)=1-aPc$, this comes down to:
\begin{eqnarray*}
\lefteqn{(1-ac^*)(1-cc^*)^{-1}(1-ca^*)-(1-aa^*)}\\
& = &1-ca^*+(c-a)c^*(1-cc^*)^{-1}
c(c-a)^*+(c-a)c^*-1+aa^*\\
& = & (c-a)c^*(1-cc^*)^{-1}
c(c-a)^*+cc^*+aa^*-ac^*-ca^*\\
& = & (c-a)c^*(1-cc^*)^{-1}
c(c-a)^*+(c-a)(c-a)^*\ge0
\end{eqnarray*}
whenever $a\neq c$ satisfy $\|a\|,\|c\|<1$. 

Note that this also proves 
that
$$
\left\|H(a,a^*)(1)^{-\frac12}H(a,c^*)(1)H(c,c^*)(1)^{-1}
H(c,a^*)(1)H(a,a^*)(1)^{-\frac12}-1\right\|=0
$$
for $H(a,c)(P)=1-h(a)Ph(c^*)^*$ if and only if $h(a)=h(c)$. 
So the pseudodistance defined by this formula separates 
points if and only if $h$ is injective. The same
fact holds for the generalized half-plane.
\end{remark}


\begin{proof}[Proof of Proposition \ref{prop:3.1}]
The proof is based on showing that formula \eqref{Infinitesimal}
for $\delta_{\mathcal D_{G,\rm nc}}$ coincides with the definition 
\eqref{InfDistance1} in the particular case of a domain defined by a 
noncommutative kernel as in assumption (5). An application of 
Proposition \ref{contr} will allow us to conclude. On the way to proving 
\eqref{deriv}, we will obtain formulas allowing us to argue that 
\eqref{basic3} and \eqref{basic2} hold by applying the same 
principle as in the proof of Proposition \ref{contr}. In some cases,
for future reference, we will perform computations which are 
slightly more involved than absolutely necessary.

Thus, let us start by evaluating $G$ on elements
$\begin{bmatrix}
a & {b}\\
0 & c
\end{bmatrix}$, $a\in\mathcal D_{G, n},c\in\mathcal D_{G,m}$. We have 
$
G\left(\begin{bmatrix}
a & {b}\\
0 & c
\end{bmatrix},\begin{bmatrix}
a^* & 0\\
{b}^* & c^*
\end{bmatrix}\right)\left(I_{n+m}\right)>0
$
whenever $\begin{bmatrix}
a & {b}\\
0 & c
\end{bmatrix}\in\mathcal D_{G,n+m}.$ As $a\in\mathcal
D_{G,n},c\in\mathcal D_{G,m}$, we can use the properties of 
nc functions/kernels to write explicitly the entries of this matrix. For 
future reference, we consider the general case, with $P_{11}\in
\mathcal J^{n\times n},P_{12}\in\mathcal J^{n\times m},
P_{21}\in\mathcal J^{m\times n},P_{22}\in\mathcal J^{m\times m}$,
$a_1,a_2\in\mathcal D_{G_1,n},c_1,c_2\in\mathcal D_{G_1,m},
b_1\in\mathcal V^{n\times m},b_2\in\mathcal V^{m\times n}$.
According to condition \eqref{sette} in the definition of affine nc kernels,
\begin{eqnarray*}
\lefteqn{G\left(\begin{bmatrix}
a_1 & {b}_1\\
0_{m\times n} & c_1
\end{bmatrix},\begin{bmatrix}
a_2 & 0_{n\times m}\\
{b}_2 & c_2
\end{bmatrix}\right)\left(\begin{bmatrix}
P_{11} & P_{12}\\
P_{21} & P_{22}
\end{bmatrix}\right)}\\
& = & G\left(\begin{bmatrix}
a_1 & {b}_1\\
0 & c_1
\end{bmatrix},\begin{bmatrix}
0 & 1_n\\
1_m & 0
\end{bmatrix}
\begin{bmatrix}
c_2 & {b}_2\\
0_{n\times m} & a_2
\end{bmatrix}\begin{bmatrix}
0 & 1_m\\
1_n & 0
\end{bmatrix}\right)\left(\begin{bmatrix}
P_{12} & P_{11}\\
P_{22} & P_{21}
\end{bmatrix}\begin{bmatrix}
0 & 1_m\\
1_n & 0
\end{bmatrix}\right)\\
& = & G\left(\begin{bmatrix}
a_1 & {b}_1\\
0 & c_1
\end{bmatrix},\begin{bmatrix}
c_2 & {b}_2\\
0 & a_2
\end{bmatrix}\right)\left(\begin{bmatrix}
P_{12} & P_{11}\\
P_{22} & P_{21}
\end{bmatrix}\right)\begin{bmatrix}
0 & 1_m\\
1_n & 0
\end{bmatrix},
\end{eqnarray*}
On the other hand, 
\begin{eqnarray*}
\lefteqn{G\left(\begin{bmatrix}
a_1 & {b}_1\\
0 & c_1
\end{bmatrix},\begin{bmatrix}
c_2 & b_2\\
0 & a_2
\end{bmatrix}\right)\left(\begin{bmatrix}
P_{12} & P_{11}\\
P_{22} & P_{21}
\end{bmatrix}\right)=}\\
&  & \begin{bmatrix}
G\left(a_1,\begin{bmatrix}
c_2 & b_2\\
0 & a_2
\end{bmatrix}\right)\left(\begin{bmatrix}
P_{12} & P_{11}
\end{bmatrix}
\right)+{}_0\Delta G\left(a_1;c_1,\begin{bmatrix}
c_2 & b_2\\
0 & a_2
\end{bmatrix}\right)\left(b_1,\begin{bmatrix}
P_{22} & P_{21}
\end{bmatrix}
\right)\\
G\left(c_1,\begin{bmatrix}
c_2 & b_2\\
0 & a_2
\end{bmatrix}\right))\left(\begin{bmatrix}
P_{22} & P_{21}
\end{bmatrix}
\right)
\end{bmatrix}.
\end{eqnarray*}
We identify each of the two components of this column vector.
\begin{eqnarray*}
\lefteqn{G\left(a_1,\begin{bmatrix}
c_2 & b_2\\
0 & a_2
\end{bmatrix}\right)\left(\begin{bmatrix}
P_{12} & P_{11}
\end{bmatrix}
\right)}\\
& = & \begin{bmatrix}
G(a_1,c_2)([P_{12}]) & {}_1\Delta G(a_1,c_2;a_2)([P_{12}],b_2)+
G(a_1,a_2)([P_{11}])
\end{bmatrix},
\end{eqnarray*}
and
\begin{eqnarray*}
\lefteqn{G\left(c_1,\begin{bmatrix}
c_2 & b_2\\
0 & a_2
\end{bmatrix}\right)\left(\begin{bmatrix}
P_{22} & P_{21}
\end{bmatrix}
\right)}\\
& = & \begin{bmatrix}
G(c_1,c_2)([P_{22}]) & {}_1\Delta G(c_1,c_2;a_2)([P_{22}],b_2)+
G(c_1,a_2)([P_{21}])
\end{bmatrix}.
\end{eqnarray*}
Finally,
\begin{eqnarray*}
\lefteqn{{}_0\Delta G\left(a_1,c_1,\begin{bmatrix}
c_2 & b_2\\
0 & a_2
\end{bmatrix}\right)\left(b_1,\begin{bmatrix}
P_{22} & P_{21}
\end{bmatrix}
\right)= }\\
& & \left[
\begin{array}{ll}
{}_0\Delta G\left(a_1;c_1,c_2\right)(b_1,[P_{22}]) & 
{}_1\Delta{}_0\Delta G(a_1;c_1,c_2;a_2)(b_1,[P_{22}],b_2)\\
 & \ \quad +{}_0\Delta G(a_1;c_1,a_2)(b_1,[P_{21}])
\end{array}\right],
\end{eqnarray*}
a row vector with two components. To centralize all results, if 
$$
G\left(\begin{bmatrix}
a_1 & {b}_1\\
0 & c_1
\end{bmatrix},\begin{bmatrix}
c_2 & b_2\\
0 & a_2
\end{bmatrix}\right)\left(\begin{bmatrix}
P_{12} & P_{11}\\
P_{22} & P_{21}
\end{bmatrix}\right)=\begin{bmatrix}
G_{11} & G_{12}\\
G_{21} & G_{22}
\end{bmatrix},
$$
then 
\begin{equation}\label{G11}
G_{11}=G(a_1,c_2)([P_{12}])+{}_0\Delta G\left(a_1;c_1,c_2\right)(b_1,
[P_{22}]),
\end{equation}
\begin{eqnarray}\label{G12}
G_{12}& = & {}_1\Delta G(a_1,c_2;a_2)([P_{12}],b_2)\\
& & \mbox{}+
G(a_1,a_2)([P_{11}])+{}_1\Delta{}_0\Delta G(a_1;c_1,c_2;a_2)
(b_1,[P_{22}],b_2)\nonumber\\
& & \mbox{}+{}_0\Delta G(a_1;c_1,a_2)(b_1,[P_{21}]),\nonumber
\end{eqnarray}
\begin{equation}\label{G21}
G_{21}=G(c_1,c_2)([P_{22}]),
\end{equation}
\begin{equation}\label{G22}
G_{22}={}_1\Delta G(c_1,c_2;a_2)([P_{22}],b_2)+
G(c_1,a_2)([P_{21}]).
\end{equation}

For any C${}^*$-algebra $\mathcal A$, 
$\begin{bmatrix}
u & v\\
v^* & w
\end{bmatrix}\in\mathcal A^{(n+m)\times(n+m)}$ is strictly positive
if and only if $u>0,w>0$ and $v^*u^{-1}v<w$ (or, equivalently,
$u>vw^{-1}v^*$ - see \cite[Chapter 3]{Paulsen}). 
Thus, $\begin{bmatrix}
a & b\\
0 & c
\end{bmatrix}\in\mathcal D_{G,n+m}$ if and only if\footnote{
If the requirement in the definition of $\mathcal
D_{G,\rm nc}$
were that
$G\left(\begin{bmatrix}
a & {b}\\
0 & c
\end{bmatrix},\begin{bmatrix}
a^* & 0\\
{b}^* & c^*
\end{bmatrix}\right)$ is completely positive (equivalently,
$G\left(\begin{bmatrix}
a & {b}\\
0 & c
\end{bmatrix},\begin{bmatrix}
c^* & {b}^*\\
0 & a^*
\end{bmatrix}\right)\left(\begin{bmatrix}
P_{12} & P_{11}\\
P_{22} & P_{21}
\end{bmatrix}\right)\begin{bmatrix}
0 & 1 \\
1 & 0
\end{bmatrix}>0$ for all 
$\begin{bmatrix}
P_{11} & P_{12}\\
P_{21} & P_{22}
\end{bmatrix}>0$ 
in $\mathcal J^{(n+m)\times(n+m)}$ -- so  $P_{12}
=P_{21}^*$) for all $n,m$, then, according to the above formula applied 
to $a_1=a,a_2=a^*,c_1=c,c_2=c^*,b_1=b,b_2=b^*$, the requirement 
$\begin{bmatrix}
G_{12} & G_{11}\\
G_{22} & G_{21}
\end{bmatrix}>0$, would become
\begin{eqnarray*}
0& < & G(c,c^*)([P_{22}]),\\
0 & < & G(a,a^*)([P_{11}])+{}_1\Delta {}_0\Delta G(a;c,c^*;a^*)(b,[P_{22}],b^*)\\
& & \mbox{}+{}_1\Delta G(a,c^*;a^*)([P_{12}],b^*)
+{}_0\Delta G(a;c,a^*)(b,[P_{21}]),
\end{eqnarray*}
and
\begin{eqnarray}
\lefteqn{\left[G(a,c^*)([P_{12}])+{}_0\Delta G(a;c,c^*)(b,[P_{22}])
\right]\left[G(c,c^*)([P_{22}])\right]^{-1}}\nonumber\\
& & \mbox{}\times\left[{}_1\Delta G(c,c^*;a^*)([P_{22}],b^*)+G
(c,a^*)([P_{21}])\right]\nonumber\\
& < & 
G(a,a^*)([P_{11}])+{}_0\Delta G(a;c,a^*)(b,[P_{21}])
+{}_1\Delta G(a,c^*;a^*)([P_{12}],b^*)\nonumber\\
& & \mbox{}+{}_1\Delta {}_0\Delta G(a;c,c^*;a^*)(b,[P_{22}],b^*).
\label{long}
\end{eqnarray}
}
$$
a\in\mathcal D_{G,n},c\in\mathcal D_{G,m}\ \text{ and } \ 
G\left(\begin{bmatrix}
a & {b}\\
0 & c
\end{bmatrix},\begin{bmatrix}
c^* & {b}^*\\
0 & a^*
\end{bmatrix}\right)\left(\begin{bmatrix}
0 & 1_n\\
1_m & 0
\end{bmatrix}\right)\begin{bmatrix}
0 & 1_m \\
1_n & 0
\end{bmatrix}>0.
$$ 
The requirement of positivity for $G$ applied to a block-diagonal
$P=\begin{bmatrix}
P_{11} & P_{12}\\
P_{21} & P_{22}
\end{bmatrix}=\begin{bmatrix}
P_{11} & 0\\
0 & P_{22}
\end{bmatrix}$, means
\begin{eqnarray*}
0& < & G(c,c^*)([P_{22}]),\\
0 & < & G(a,a^*)([P_{11}])+{}_1\Delta {}_0\Delta G(a;c,c^*;a^*)
(b,[P_{22}],b^*),
\end{eqnarray*}
and
\begin{eqnarray}
\lefteqn{{}_0\Delta G(a;c,c^*)(b,[P_{22}])
\left[G(c,c^*)([P_{22}])\right]^{-1}{}_1\Delta G(c,c^*;a^*)
([P_{22}],b^*)}\nonumber\\
& < & 
G(a,a^*)([P_{11}])+{}_1\Delta {}_0\Delta G(a;c,c^*;a^*)
(b,[P_{22}],b^*).\label{useful}
\end{eqnarray}
(Note that if $G_1(x,x^*)$ were cp, by letting $P_{11}$ go to zero in 
the above, we'd 
conclude that the map $P_{22}\mapsto{}_1\Delta {}_0\Delta G_1
(a,c,c^*,a^*)(b,[P_{22}],b^*)$ is necessarily a completely positive
map whenever $b$ is so that 
$\begin{bmatrix}
a & b\\
0 & c
\end{bmatrix}\in\mathcal D_{G,n+m}$.)

Given $a,c$ as above, by the openness of
$\mathcal O_{\rm nc}$, which is a consequence of 
condition (5) and of the analyticity of $G$, we 
know that there is an $\epsilon>0$ depending on
$a,c$ so that $\begin{bmatrix}
a & b\\
0 & c
\end{bmatrix}\in\mathcal D_{G,n+m}$ for all $b\in
\mathcal V^{n\times m}$ with $\|b\|<\epsilon$. Fix a direction 
$b_0\in\mathcal V^{n\times m}$. Then, recalling the definition \eqref{InfDistance1} for $\delta$,
$$
\varepsilon_0:=\left[\delta_{\mathcal D_{G,\rm nc}}(a,c)(b)\right]^{-1}=\sup\left\{t\in(0,+\infty)\colon
\begin{bmatrix}
a & sb_0\\
0 & c
\end{bmatrix}\in\mathcal D_{G,n+m}\forall s<t\right\}\in(0,+\infty].
$$
Observe that if 
$$
{}_1\Delta {}_0\Delta G(a;c,c^*;a^*)(b_0,1,b_0^*)
\ge{}_0\Delta G(a;c,c^*)(b_0,1)
G(c,c^*)(1)^{-1}{}_1\Delta G(c,c^*;a^*)(1,b_0^*)
$$ 
then $\mathcal D_{G,n+m}$ contains a complex 
line. Indeed, one simply divides
by $|z|^2$ in \eqref{useful}.

We argue that $\delta_{\mathcal D_{G,\rm nc}}(a,c)(b)$
is indeed given in this case by formula \eqref{Infinitesimal}.
 If $\varepsilon_0<+\infty$, then it can be written as
\begin{eqnarray*}
\lefteqn{\delta_{\mathcal D_{G,\rm nc}}(a,c)(b)^2=\varepsilon_0^{-2}=}\\
& & \sup\left\{\varphi\left(G(a,a^*)(1)^{-1/2}\left[
{}_0\Delta G(a;c,c^*)(b_0,1)
\left[G(c,c^*)(1)\right]^{-1}{}_1\Delta G(c,c^*;a^*)
(1,b_0^*)\right.\right.\right.\\
& & \quad\ \ \mbox{}-{}_1\Delta {}_0\Delta G(a;c,c^*;a^*)
(b_0,1,b_0^*)\left.\left.\left.\frac{}{}
\right]G(a,a^*)(1)^{-1/2}\right)
\colon\varphi\colon\mathcal K^{n\times n}\to\mathbb C\text{ state}
\right\}.
\end{eqnarray*}
Thus, formula \eqref{Infinitesimal} holds. By Proposition \ref{contr}, we conclude 
relation \eqref{deriv}. If $\varepsilon_0=+\infty$, there is nothing to prove.

Consider now the case $b_0=\epsilon(a-c)$ for some arbitrary
$\epsilon\in\mathbb C$. We apply Equation \eqref{FDC} to write
\begin{itemize}
\item ${}_0\Delta G(a;c,a^*)(\epsilon(a-c),[P_{21}])=
G(a,a^*)([\epsilon P_{21}])-G(c,a^*)([\epsilon P_{21}])$;
\item ${}_1\Delta G(a,c^*;a^*)([P_{12}],\overline{\epsilon}(a-c)^*)
=G(a,a^*)([\overline{\epsilon}P_{12}])-G(a,c^*)
([\overline{\epsilon}P_{12}]);$
\item ${}_0\Delta G(a;c,c^*)(\epsilon(a-c),[P_{22}])=
G(a,c^*)([\epsilon P_{22}])-G(c,c^*)([\epsilon P_{22}])$;
\item ${}_1\Delta G(c,c^*;a^*)
([P_{22}],\overline{\epsilon}(a-c)^*)=G(c,a^*)([\overline{\epsilon}
P_{22}])-G(c,c^*)([\overline{\epsilon}P_{22}]);$
\item $
{}_1\Delta {}_0\Delta G(a;c,c^*;a^*)
(\epsilon(a-c),[P_{22}],\overline{\epsilon}(a-c)^*)=$

\centerline{$
G(a,a^*)([|\epsilon|^2P_{22}])-G(c,a^*)([|\epsilon|^2P_{22}])
-G(a,c^*)([|\epsilon|^2P_{22}])+G(c,c^*)([|\epsilon|^2P_{22}]).$}
\end{itemize}
We record for future reference the expressions for $G_{ij}$ 
corresponding to $b_0=\epsilon(a-c)$.
\begin{eqnarray}
G_{11} & = & G(a,c^*)([P_{12}])+\epsilon(G(a,c^*)([P_{22}])-
G(c,c^*)([P_{22}]))\nonumber\\
G_{12} & = & G(a,a^*)([\epsilon P_{21}]+[\overline{\epsilon}P_{12}])
-G(a,c^*)([\overline{\epsilon}P_{12}])-G(c,a^*)([\epsilon P_{21}])
\nonumber\\
& & \mbox{}+G(a,a^*)([P_{11}])\label{a-c}\\
& & \mbox{}+|\epsilon|^2(G(a,a^*)([P_{22}])+
G(c,c^*)([P_{22}])-G(a,c^*)([P_{22}])-G(c,a^*)([P_{22}]))
\nonumber\\
G_{21} & = & G(c,c^*)([P_{22}])\nonumber\\
G_{22} & = & G(c,a^*)([P_{21}])+\overline{\epsilon}(G(c,a^*)([P_{22}])
-G(c,c^*)([P_{22}]))\nonumber
\end{eqnarray}

For $\epsilon=1$, we obtain that for any state $\psi$ on $\mathcal K^{n
\times n}$ and $\varepsilon>0$, there is a state $\varphi$ on 
$\mathcal K^{n\times n}$ depending on $\varepsilon$ such 
that
\begin{eqnarray}
\lefteqn{\psi\left(H(f(a),f(a)^*)(1)^{-\frac12}H(f(a),f(c)^*)(1)^{}
\left[H(f(c),f(c)^*)(1)\right]^{-1}\right.}\nonumber\\
& & \mbox{}\times H(f(c),f(a)^*)(1)\left.
H(f(a),f(a)^*)(1)^{-\frac12}-1\right)-\varepsilon\nonumber\\
& \leq & 
\varphi\left(G(a,a^*)(1)^{-\frac12}G(a,c^*)(1)
\left[G(c,c^*)(1)\right]^{-1}G(c,a^*)(1)G(a,a^*)(1)^{-\frac12}-1
\right).
\nonumber
\end{eqnarray}
Recall that $\psi,\varphi$ are states, so that $\varphi(1)=
\psi(1)=1$, which implies that
\begin{eqnarray}
\lefteqn{\psi\left(H(f(a),f(a)^*)(1)^{-\frac12}H(f(a),f(c)^*)(1)^{}
\left[H(f(c),f(c)^*)(1)\right]^{-1}\right.}\nonumber\\
& & \mbox{}\times H(f(c),f(a)^*)(1)\left.
H(f(a),f(a)^*)(1)^{-\frac12}\right)-\varepsilon\nonumber\\
& \leq & 
\varphi\left(G(a,a^*)(1)^{-\frac12}G(a,c^*)(1)
\left[G(c,c^*)(1)\right]^{-1}G(c,a^*)(1)G(a,a^*)(1)^{-\frac12}
\right).
\nonumber
\end{eqnarray}
Clearly the elements under the states above are nonnegative, so this
reduces to
\begin{eqnarray}
\lefteqn{\left\|H(f(a),f(a)^*)(1)^{-\frac12}H(f(a),f(c)^*)(1)^{}
\left[H(f(c),f(c)^*)(1)\right]^{-1}\right.}\nonumber\\
& & \mbox{}\times H(f(c),f(a)^*)(1)\left.
H(f(a),f(a)^*)(1)^{-\frac12}\right\|\nonumber\\
& \leq & 
\left\|G(a,a^*)(1)^{-\frac12}G(a,c^*)(1)
\left[G(c,c^*)(1)\right]^{-1}G(c,a^*)(1)G(a,a^*)(1)^{-\frac12}
\right\|.
\nonumber
\end{eqnarray}
The last inequality of our proposition, \eqref{basic2}, is
a trivial consequence of the selfadointness of the elements
involved, together with the previous results.
\end{proof}

We are not automatically able to conclude the norm-inequality
\eqref{basicnorm} only because the norm of the left-hand
side might be achieved at the lower bound of the spectrum.
However, assuming the hypothesis of Remark \ref{rmk3.3} guarantees
this is not the case. It wouldn't be unreasonable to suppose
that this hypothesis is satisfied in most cases of interest. So we 
discuss next three things related to it.

\begin{remark}
First, not surprisingly, the inequality 
$G(a,a^*)(1)-G(a,c^*)(1)G(c,c^*)(1)^{-1}$
$G(c,a^*)(1)\geq0$, opposite to the one introduced in Remark \ref{rmk3.3}, cannot hold
under the assumption of no complex lines in $\mathcal D_{G,\rm nc}$. 
Indeed, if we put $b=\epsilon(a-c)$ in formulas 
\eqref{G11}, \eqref{G12}, \eqref{G21} and \eqref{G22} for $a_1=
a_2^*=a,c_1=c_2^*=c$, we obtain, according to \eqref{a-c} with
$\epsilon>0,P_{11}=P_{22}=1,P_{12}=P_{21}=0$, the matrix inequality
{\footnotesize
$$
\begin{bmatrix}
G(a,a^*)(1+\epsilon^2)-\epsilon^2G(a,c^*)(1)-\epsilon^2G(c,a^*)(1)+
\epsilon^2G(c,c^*)(1) & \epsilon[G(a,c^*)(1)-G(c,c^*)(1)]\\
\epsilon[G(c,a^*)(1)-G(c,c^*)(1)] & G(c,c^*)(1)
\end{bmatrix}>0.
$$
}
Multiplying with $\begin{bmatrix}
1 & \epsilon\\
0 & 1
\end{bmatrix}$ left and its adjoint right does not change the
positivity of the matrix, so that
\begin{equation}\label{gac}
\begin{bmatrix}
(1+\epsilon^2)G(a,a^*)(1) & \epsilon G(a,c^*)(1)\\
\epsilon G(c,a^*)(1) & G(c,c^*)(1)
\end{bmatrix}>0,
\end{equation}
for any $\epsilon>0$ such that $\begin{bmatrix}
a & \epsilon(a-c)\\
0 & c \end{bmatrix}\in\mathcal D_{G,2n}.$
Since $\mathcal D_{G,2n}$ contains no complex line, it follows that
there is an $\epsilon_0(a,c)>0$ maximal beyond which the matrix 
inequality above fails. Thus, necessarily
$G(a,a^*)(1)-G(a,c^*)(1)G(c,c^*)(1)^{-1}G(c,a^*)(1)\not\geq0$.

\end{remark}

\begin{remark}

Second, we observe that certain obvious transformations of
$G$ have similarly obvious effects on $\mathcal D_{G,\rm nc}.$ For
example, composing $G$ with a completely positive 
unital map increases $\mathcal D_{G,\rm nc}$. Indeed, if 
$\Phi$ is such a map, then $G(a,a^*)(1)>\varepsilon1\implies
\Phi(G(a,a^*)(1))>\varepsilon\Phi(1)=\varepsilon1$, so that
$\mathcal D_{G,\rm nc}\subseteq\mathcal D_{\Phi\circ G,\rm nc}$.
Subtracting a positive multiple of 1 from $G$ decreases 
$\mathcal D_{G,\rm nc}$, adding increases it. However,
if for any $t\in\mathbb R\setminus\sigma(G(c,c^*)(1))$ we let
$$
f(t)=[G(a,c^*)(1)-t1]
\left[G(c,c^*)(1)-t1\right]^{-1}[G(c,a^*)(1)-t1]-[G(a,a^*)(1)-t1],
$$
then 
\begin{eqnarray*}
\lefteqn{f'(t)=
\left[(G(c,c^*)(1)-t1)^{-1}(G(c,a^*)(1)-t1)-1\right]}\\
& & \quad\quad\mbox{}\times\left[(G(a,c^*)(1)-t1)(G(c,c^*)(1)-t1)^{-1}-1\right]\ge0,
\end{eqnarray*}
(recall hypothesis (3) which states that $G(a,c)(1)^*=G(c^*,a^*)(1))$,
and
\begin{eqnarray*}
\lefteqn{f''(t)=2\left[1-(G(a,c^*)(1)-t1)(G(c,c^*)(1)-t1)^{-1}\right]}\\
& & \mbox{}\times(G(c,c^*)(1)-t1)^{-1}
\left[1-(G(c,c^*)(1)-t1)^{-1}(G(c,a^*)(1)-t1)\right]\ge0,
\end{eqnarray*}
for all $t\in\mathbb R\setminus\sigma(G(c,c^*)(1)$. This means 
that for any state $\varphi$, the map $t\mapsto\varphi\circ f$ is
convex and increasing on each connected component of 
$\mathbb R\setminus\sigma(G(c,c^*)(1)$. Clearly, 
$\lim_{t\to\pm\infty}\|f(t)-(G(a,c^*)(1)+G(c,c^*)(1)+G(c,a^*)(1)-G(a,a^*)(1))\|=0$,
so if $a$ is such that $G(a,c^*)(1)+G(c,c^*)(1)+G(c,a^*)(1)-G(a,a^*)(1)\ge0$
(for example, $c\in\mathcal D_{G,n}$ and $a$ close to $c$), then 
$f(t)\ge0$ for all real $t$ in the connected component of $-\infty$. Conversely,
if $G(a,c^*)(1)+G(c,c^*)(1)+G(c,a^*)(1)-G(a,a^*)(1)\not\ge0$, then
$f(t)\not\ge0$ for all real $t$ in the connected component of $+\infty$.
\end{remark}

\begin{remark}\label{MatrixConvexity}
Finally, the function $\delta$ has been defined in terms of the length of a ``ray'' 
in a given direction. In this remark we look at the whole set of points $b$ for which the upper 
triangular matrix $\begin{bmatrix}
a & b\\
0 & c\end{bmatrix}$ belongs to the chosen noncommutative set.
Consider a nc set $\mathcal D$ which satisfies property \eqref{propr2set}. 
Fix $m,n\in\mathbb N$ and $a\in\mathcal D_n,c\in\mathcal D_m$. Let 
$$
\daleth(a,c)_{\rm nc}=\coprod_{k\in\mathbb N}\left\{b\in\mathcal V^{nk\times mk}\colon
\begin{bmatrix}
I_k\otimes a & b\\
0 & I_k\otimes c\end{bmatrix}\in\mathcal D_{km+kn}\right\}.
$$
It is quite easy to see that this set is noncommutative: if $b=b_1\oplus b_2$ with $b_j\in
\daleth(a,c)_{k_j}$, $j=1,2$, then 
$$
\begin{bmatrix}
I_{k_1+k_2}\otimes a & b\\
0 & I_{k_1+k_2}\otimes c\end{bmatrix}=
\begin{bmatrix}
I_{k_1}\otimes a & 0  & b_1 & 0 \\
0 & I_{k_2}\otimes a & 0 & b_2 \\
0 & 0 & I_{k_1}\otimes c & 0 \\
0 & 0 & 0 & I_{k_2}\otimes c\end{bmatrix},
$$
By permuting rows 2 and 3 and columns 2 and 3 (which comes to the conjugation with a 
scalar matrix), we obtain
$$
\begin{bmatrix}
I_{k_1}\otimes a  & b_1 & 0 & 0 \\
0 & I_{k_1}\otimes c & 0 & 0 \\
0 & 0 & I_{k_2}\otimes a & b_2 \\
0 & 0 & 0 & I_{k_2}\otimes c\end{bmatrix},
$$
which belongs to $\mathcal D_{(n+m)(k_1+k_2)}$ because $\mathcal D$ is a noncommutative set.

Similarly, $
\daleth(a,c)_{\rm nc}$ is invariant by conjugation with scalar unitary matrices: if $b\in
(\mathcal V^{n\times m})^{k\times k}$, then for any unitary matrix $U\in\mathbb C^{k\times k}$,
$$
\begin{bmatrix}
I_k\otimes a & UbU^*\\
0 & I_k\otimes c\end{bmatrix}=\begin{bmatrix}
U\otimes I_n & 0\\
0 & U\otimes I_m\end{bmatrix}\begin{bmatrix}
I_k\otimes a & b\\
0 & I_k\otimes c\end{bmatrix}\begin{bmatrix}
U^*\otimes I_n & 0\\
0 & U^*\otimes I_m\end{bmatrix}, 
$$
which belongs to $\mathcal D_{k(m+n)}$ by assumption \eqref{propr2set} on the set $\mathcal D$.

Given the unitary invariance of the set $\daleth(a,c)_{\rm nc}$ and Lemma
\ref{lemma3.4}, one is justified in asking whether $\daleth(a,c)_{\rm nc}$
is in fact matrix convex. That turns out to be false in general.

Let us recall Wittstock's definition of a matrix convex set (see \cite[Section 3]{EW}): a matrix convex set
is a noncommutative set $K=\coprod_nK_n$ such that for any $S\in\mathbb C^{r\times n}$
satisfying $S^*S=I_n$, we have $S^*K_rS\subseteq K_n$. Since $\daleth(a,c)_{\rm nc}$
is invariant by conjugation with scalar unitary matrices, matrix convexity of $\daleth(a,c)_{\rm nc}$
is equivalent to the following statement: for any $k<k'\in\mathbb N$ and $b\in\daleth(a,c)_{k'}$,
we have $\begin{bmatrix}
I_k & 0\end{bmatrix} b\begin{bmatrix}
I_k\\
0\end{bmatrix}\in\daleth(a,c)_{k},$ i.e. the upper right $k\times k$ corner of $b$ is
an element of $\daleth(a,c)_{\rm nc}$ whenever $b$ is. There is a simple counterexample
to this statement: consider the unit disk $\mathbb D$ in the complex plane, and the noncommutative
set 
$$
\mathcal D=\coprod_{k\in\mathbb N}\{A\in\mathbb C^{k\times k}\colon \sigma(A)\subset\mathbb D,
\|A\|<k\}.
$$
This is clearly a noncommutative set (if $A_j\in\mathcal D_{k_j}$, then $\|A_1\oplus A_2\|=\max
\{\|A_1\|,\|A_2\|\}<\max\{k_1,k_2\}<k_1+k_2$) which is unitarily invariant ($\|U^*AU\|=\|A\|$).
However, 
$$
\begin{bmatrix}
0 & 0 & 3 & 0\\
0 & 0 & 0 & \frac12\\
0 & 0 & 0 & 0 \\
0 & 0 & 0 & 0
\end{bmatrix}\in\mathcal D_4,\quad\text{while}\quad \begin{bmatrix}
0 & 3\\
0 & 0 
\end{bmatrix}\not\in\mathcal D_2,
$$
which means that $\begin{bmatrix}
3 & 0\\
0 & \frac12
\end{bmatrix}\in\daleth(0,0)_{2}$, while $3\not\in\daleth(0,0)_{1}$.

However, there are important classes of nc sets for which the set $\daleth$ is matrix convex.
One such example is the class of generalized half-planes (see Remark \ref{generalizeds}). 
Consider an injective nc map $h\colon\mathcal V_{\rm nc}\to\mathcal A_{\rm nc}$ for some
unital $C^*$-algebra $\mathcal A$. Recall that a generalized half-plane is 
$$
H^+_h(\mathcal V)=\coprod_{n=1}^\infty\{a\in\mathcal V^{n\times n}\colon h(a)+h(a)^*>0\}.
$$
Then elements $b\in\daleth(a,c)_{\rm nc}$ must satisfy 
$$
(\Re h(a))^{-1/2}
\Delta h(a,c)(b)(\Re h(c))^{-1}\Delta h(a,c)(b)^*(\Re h(a))^{-1/2}<4\cdot1.
$$
That is, for any $k'\in\mathbb N$,
\begin{eqnarray*}
\lefteqn{(I_{k'}\otimes\Re h(a))^{-1/2}\Delta h(I_{k'}\otimes a,I_{k'}\otimes c)(b)
(I_{k'}\otimes\Re h(c))^{-1}}\\
& & \mbox{}\times\Delta h(I_{k'}\otimes a,I_{k'}\otimes c)(b)^*(I_{k'}\otimes\Re h(a))^{-1/2}\\
& = & 
\left[\sum_{l=1}^{k'}(\Re h(a))^{-1/2}\Delta h(a,c)(b_{il})(\Re h(c))^{-1}
\Delta h(a,c)(b_{jl})^*(\Re h(a))^{-1/2}\right]_{1\le i,j\le k'}\\
&<&4I_{k'}\otimes1.
\end{eqnarray*}
If one fixes such a $k'>1$ in $\mathbb N$ and a $b\in\daleth(a,c)_{k'}$, proving matrix convexity 
comes to proving that the upper right $k\times k$ corner of $b$ is in $\daleth(a,c)_k$ for all $0<k<k'$.
That is,
$$
\left[\sum_{l=1}^{k}(\Re h(a))^{-1/2}\Delta h(a,c)(b_{il})(\Re h(c))^{-1}
\Delta h(a,c)(b_{jl})^*(\Re h(a))^{-1/2}\right]_{1\le i,j\le k}<4I_k\otimes1.
$$
Denoting $P_k$ the projection onto the first $k$ coordinates of $\mathbb C^{k'\times k'}$,
the above relation is equivalent to 
\begin{eqnarray*}
\lefteqn{(P_kI_{k'}\otimes\Re h(a))^{-1/2}\Delta h(I_{k'}\otimes a,I_{k'}\otimes c)(b)
(P_kI_{k'}\otimes\Re h(c))^{-1}}\\
& & \mbox{}\times \Delta h(I_{k'}\otimes a,I_{k'}\otimes c)(b)^*(P_kI_{k'}\otimes\Re 
h(a))^{-1/2}<4P_kI_{k'}\otimes1=4I_k\otimes 1.
\end{eqnarray*}
This is implied by the general fact that $AA^*<4I_{k'}\implies PAPA^*P\le4P=4I_k.$ Indeed,
clearly $AA^*<4I_{k'}\implies PAA^*P\le4P=4I_k$ and $P\leq I_{k'}\implies(PA)P(P^*A)^*\le
PAA^*P\le4I_k.$ Thus, $P_kbP_k\in\daleth(a,c)_k$ for all $0<k<k'$.
Clearly this proof applies as well to generalized balls.

\end{remark}

As mentioned in Section \ref{pseudodistance}, the definition of $\tilde{d}_\mathcal D$
is similar to the definition of the Kobayashi distance. We show next that generally
${d}_\mathcal D$ is dominated by the Kobayashi distance, with equality for the
unit ball or half-plane of a $C^*$-algebra. We denote by $k_{\mathcal D,n}$
the Kobayashi distance on the domain $\mathcal D_n$, and by $\kappa_{\mathcal D,n}$
the infinitesimal Kobayashi metric on $\mathcal D_n$.

\begin{prop}\label{Prop4.7}
Let $\mathcal V$ be an operator system and $\mathcal A$ a $C^*$-algebra. Consider 
an injective noncommutative function $h$ defined on an open noncommutative subset 
of  $\mathcal V_{\rm nc}$ whose range contains the unit ball of $\mathcal A_{\rm nc}$. 
Define
$$
\mathcal D_{h,\rm nc}=\coprod_{n=1}^\infty\{a\in\mathcal V^{n\times n}\colon
h(a)h(a)^*<1\}\subseteq\mathcal V_{\rm nc}.
$$ 
Then ${d}_{\mathcal D_{h,n}}(a,c)\leq k_{\mathcal D_{h,n}}(a,c)$ for all  
$a,c\in\mathcal D_{h,n},n\in\mathbb N$.
\end{prop}

\begin{proof}
According to relation \eqref{NCdisk}, the infinitesimal 
Poincar\'e (or hyperbolic) metric on the unit disk $\mathbb D$, 
$\kappa_\mathbb D(z,v)$, coincides with $\delta_\mathbb D(z,z)(v)$:
they both equal $\frac{|v|}{1-\overline{z}z}$. Thus, the metric
generated by $\delta_\mathbb D(z,z)(v)$ coincides with the one
generated by $\kappa_\mathbb D(z,v)$, the Poincar\'e metric.  
According to the definition of the Kobayashi metric, 
\begin{eqnarray}
\lefteqn{\kappa_{\mathcal D_{h,n}}(a,b)}\nonumber\\
& = & \inf\{|w|\colon\exists f\colon\mathbb D\to\mathcal D_{h,n}\text{ analytic},f(0)=a,f'(0)(w)=b\}\nonumber\\
& = & \inf\{\kappa_\mathbb D(0,w)\colon\exists f\colon\mathbb D\to\mathcal D_{h,n}\text{ analytic},f(0)=a,f'(0)(w)=b\}.\label{InfKob}
\end{eqnarray}
By von Neumann's inequality, any holomorphic function $f\colon\mathbb D\to\mathcal D_{h,n}$
has a noncommutative extension $f\colon\mathbb D_{\rm nc}\to(\mathcal D_{h,n})_{\rm nc}:$
$h(f(T))h(f(T))^*<1$ whenever $T\in \mathbb C^{n\times n},\|T\|<1$. This implies 
$$
\delta_{\mathcal D_{h}}(f(z),f(z))(f'(z)(w))=\delta_{\mathcal D_{h}}(f(z),f(z))(\Delta f(z,z)(w))\leq\delta_{\mathbb D_{\rm nc}}(z,z)(w)=\kappa_\mathbb D(z,w)
$$
for all  $z,w\in\mathbb D$. For any $\varepsilon>0$, we choose 
$f_\varepsilon\colon\mathbb D\to\mathcal D_{h,n}$ such that $f_\varepsilon(0)=a,f'_\varepsilon(0)\in\text{Span}_\mathbb C\{b\}$, and
$\frac{\|b\|}{\|f'_\varepsilon(0)\|}=\kappa_\mathbb D(0,\frac{\|b\|}{\|f'_\varepsilon(0)\|})\le
\kappa_{\mathcal D_{h,n}}(f_\varepsilon(0),f'_\varepsilon(0)(\frac{\|b\|}{\|f'_\varepsilon(0)\|}))+\varepsilon=\kappa_{\mathcal D_{h,n}}(a,b)+\varepsilon.$
Thus,
\begin{eqnarray*}
\delta_{\mathcal D_h}(a,a)(b) & = & \delta_{\mathcal D_h}(f_\varepsilon(0),f_\varepsilon(0))\left(f'_\varepsilon(0)\left(\frac{\|b\|}{\|f'_\varepsilon(0)\|}\right)\right)\\
& \leq & \delta_{\mathbb D_{\rm nc}}(0,0)\left(\frac{\|b\|}{\|f'_\varepsilon(0)\|}\right)=\kappa_\mathbb D\left(0,\frac{\|b\|}{\|f'_\varepsilon(0)\|}\right)\\
& \leq & \kappa_{\mathcal D_{h,n}}(a,b)+\varepsilon,
\end{eqnarray*}
for any $\varepsilon>0$. Thus, by integrating and taking the infimum, $d_{\mathcal D_{h,n}}\leq k_{\mathcal D_{h,n}}$. 
\end{proof}

\begin{remark}
Expression  \eqref{NCdisk} and the proof of the above proposition guarantee 
that $d$ and $k$ coincide level-by-level on the noncommutative unit ball. Of 
course, this statement remains true for any noncommutative domain which is 
biholomorphically equivalent to the noncommutative unit ball, including the
noncommutative upper half-plane (see also Theorem \ref{classif} below). In 
general, however, one should not expect equality in the inequality from 
Proposition \ref{Prop4.7}. The essential ingredient in the proof of Proposition
\ref{Prop4.7} is the fact that any analytic map $f\colon\mathbb D\to\mathcal D_{h,n}$ lifts to 
a nc map on $\mathbb D_{\rm nc}$ (the conclusion of Proposition \ref{Prop4.7} holds
for any domain $\mathcal D$ with this property). This ingredient is missing when we try to
obtain the reverse inequality in Proposition \ref{Prop4.7}: 
maps $f\colon\mathcal D_{h,n}\to\mathbb D$ need not satisfy a von Neumann inequality. 
Indeed, in most cases, one cannot even extend such an $f$ to a nc map.
\end{remark}

The definition of the classical infinitesimal Kobayashi metric \eqref{InfKob} on a domain 
is given in terms of the infimum of the norms of derivatives of analytic functions from 
the Poincar\'e disk into the domain. From the infinitesimal metric, one builds the Kobayashi
distance by integrating along curves. In the noncommutative context, distance 
$d_{\mathcal D,\infty}$ from Definition \ref{inf-Kobayashi} allows for a 
similar definition in terms of derivatives of noncommutative functions on
the noncommutative unit disk. Consider again a nc domain $\mathcal D\subset\mathcal V_{\rm nc}$
and let $x\in\mathcal D_{n},b\in\mathcal V^{n\times n}$. We claim that 
\begin{eqnarray*}
\lefteqn{\delta_{\mathcal D_{}}(x,x)(b)=}\\
& & \left[\sup\left\{t\ge0\colon\exists f\colon\mathbb D_{\rm nc}
\to\left(\mathcal D_{2n}\right)_{\rm nc}, 
f(0)=I_2\otimes x,\Delta f(0,0)(1)=t\begin{bmatrix}0&b\\0&0\end{bmatrix}\right\}\right]^{-1}.
\end{eqnarray*}
As above, by $\left(\mathcal D_{n}\right)_{\rm nc}$ we denote the subset of 
$\mathcal D$ formed of all levels which are multiples of $n$.
Indeed, inequality $\leq$ follows easily: as shown in Proposition \ref{contr}, 
and Lemma \ref{lemma3.3}, if $f$ is as in the right-hand side of the above relation, then
\begin{eqnarray*}
t\delta_{\mathcal D}(x,x)(b) & = & \delta_{\mathcal D}(x,x)(tb)\\
&=&\delta_{\mathcal D}\left(\begin{bmatrix}x&0\\0&x\end{bmatrix},\begin{bmatrix}x&0\\0&x\end{bmatrix}\right)
\left(\begin{bmatrix}0&tb\\0&0\end{bmatrix}\right)\\
& = & \delta_{\mathcal D}(f(0),f(0))(\Delta f(0,0)(1))\leq\delta_\mathbb D(0,0)(1)=1.
\end{eqnarray*}
We show the reverse inequality by finding an ``extremal'' function. Let $\iota=
\frac{1}{\delta_{\mathcal D}(x,x)(b)}$.
Consider the nc function $f=\{f_p\}_{p\in\mathbb N},$ $f_p(Z)=\begin{bmatrix}
I_p\otimes x & 0 \\
0 & I_p\otimes x\end{bmatrix}+Z\otimes\begin{bmatrix}
0 & \iota b \\
0 & 0\end{bmatrix}$, $Z\in\mathbb C^{p\times p}$, $\|Z\|<1$, $p\in\mathbb N$.
According to Lemmas \ref{lemma3.4} and \ref{lemma3.3}, we have 
\begin{eqnarray*}
\delta_{\mathcal D}(x,x)(b) & > & \delta_{\mathcal D}(x,x)(b)\|Z\|\\
& = & \delta_{\mathcal D}\left(\begin{bmatrix}
I_p\otimes x & 0 \\
0 & I_p\otimes x\end{bmatrix},\begin{bmatrix}
I_p\otimes x & 0 \\
0 & I_p\otimes x\end{bmatrix}\right)\left(Z\otimes\begin{bmatrix}
0 & b \\
0 & 0\end{bmatrix}\right),
\end{eqnarray*}
whenever $Z$ is a contraction. Thus, $f$ takes values in $\left(\mathcal D_{H,2n}\right)_{\rm nc}$
and for $Z=1\in\mathbb C,$ we actually reach the supremum in the above equality.

\section{A classification of noncommutative domains of 
holomorphy}
\label{sec:five}

We turn now towards a classification of noncommutative domains (with respect to the level topology) 
which contain no complex lines at any level, up to noncommutative holomorphic 
equivalence, in terms of $\tilde\delta$. In this section we assume that
$\mathcal V$ is a Banach space, and when dealing with domains defined 
by kernels, we further assume that $\mathcal V$ is an operator space.
Observe that if $f\colon\mathcal D\to
\mathcal E$ is a noncommutative automorphism
(i.e. a map which is bijective at each level, with analytic inverse), then inequality
stated in Corollary \ref{trentatre} must hold in both directions
(for $f$ and $f^{\langle-1\rangle}$), so they must become equalities.
That is,
\begin{eqnarray}\label{conformal}
\tilde\delta_\mathcal D(a,c)=\tilde\delta_\mathcal E(f(a),f(c)),\quad a,c\in\mathcal D_n,n\in\mathbb N.
\end{eqnarray}
Conversely, assume that there is a function $f$ as above such that
equality \eqref{conformal} holds for all $a,c\in
\mathcal D_{n}$, $n\in\mathbb N$.
Then it follows trivially that $f$ is injective. Indeed, if not, there would be an $n\in\mathbb N$
and points $a\neq c\in\mathcal D_n$ such that $f(a)=f(c)$. Then we would have
$0=\tilde\delta_\mathcal E(f(a),f(c))=\tilde\delta_\mathcal D(a,c)$, a contradiction, according to 
Theorem \ref{sep}, to the hypothesis that $\mathcal D$ contains no 
complex lines.

Proving the surjectivity of $f$ as a consequence of equality \eqref{conformal} is not possible in full 
generality. We make the following assumption about our domains:

Given a noncommutative set $\mathcal D$ in the noncommutative extension $\mathcal V_{\rm nc}$ of a 
Banach space $\mathcal V$, which is invariant under conjugation with scalar matrices,
$$
\text{For any }n\in\mathbb N\text{ and }a\in\mathcal D_n,\text{ if }\{c_k\}_{k\in\mathbb N}
\subset\mathcal D_n\text{ satisfies }\lim_{k\to\infty}\inf_{x\in\mathcal D_n^{\rm c}}\|x-c_k\|=0,
\text{ then }
$$
\vskip-0.5truecm\begin{equation}\label{hypo}
\lim_{k\to\infty}\tilde{\delta}_\mathcal D(a,c_k)=+\infty.
\end{equation}
This hypothesis does not exclude the possibility that $\tilde{\delta}_\mathcal D\equiv+\infty$.

\begin{thm}\label{classif}
Consider two noncommutative domains $\mathcal D$ and $\mathcal E$ in a given space 
$\mathcal V_{\rm nc}$ which are invariant under conjugation by unitary scalar matrices and contain 
no complex lines, and a noncommutative function $f\colon\mathcal D\to\mathcal E$. Assume that both 
$\mathcal D$ and $\mathcal E$ satisfy hypothesis \eqref{hypo}. Then the following are equivalent:
\begin{enumerate}

\item $f$ satisfies $\tilde{\delta}_\mathcal D(a,c)=\tilde{\delta}_\mathcal E(f(a),f(c)),a,c\in\mathcal D.$

\item $f$ is a bijective noncommutative map, with
noncommutative inverse.
\end{enumerate}

\end{thm}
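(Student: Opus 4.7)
The forward direction $(2)\Rightarrow(1)$ is immediate: Corollary \ref{trentatre} applied to $f$ and then to $f^{\langle-1\rangle}$ yields the two opposite $\tilde\delta$-inequalities that together force the claimed equality.

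For $(1)\Rightarrow(2)$, injectivity of $f$ is already settled in the paragraph preceding the statement: from $f(a)=f(c)$ one gets $\tilde\delta_\mathcal D(a,c)=0$, and Theorem \ref{sep} together with the no-complex-lines hypothesis forces $a=c$. The remaining tasks are surjectivity at every level and the fact that $f^{\langle-1\rangle}$ is itself a noncommutative function. My plan is to fix $n\in\mathbb N$ and show that $\Sigma_n:=f(\mathcal D_n)$ is both open and closed in $\mathcal E_n$, so that connectedness of $\mathcal E_n$ (or the relevant component) forces $\Sigma_n=\mathcal E_n$.

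For openness, I would first upgrade the $\tilde\delta$-identity to an infinitesimal $\delta$-identity at coincident points. Applying $(1)$ to pairs $(a,a-tb)$, dividing by $t$, and using the homogeneity of $\delta$ together with upper semicontinuity (Remark \ref{rem-cont}) and the analyticity of $f$, one obtains
\[
\delta_\mathcal E\bigl(f(a),f(a)\bigr)\bigl(\Delta f(a,a)(b)\bigr)=\delta_\mathcal D(a,a)(b),\qquad b\in\mathcal V^{n\times n}.
\]
Since $\mathcal E$ contains no complex lines, Theorem \ref{sep}(ii) then forces $\Delta f(a,a)(b)=0\Rightarrow b=0$, so the derivative of $f$ has trivial kernel everywhere; combining this with the noncommutative analogue of the Banach inverse-function theorem (available in the uniformly open, operator-space-analytic setting of \cite{ncfound}) produces a local analytic noncommutative inverse, so $f$ is a local biholomorphism and $\Sigma_n$ is open. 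For closedness, let $y_k=f(a_k)\to y\in\mathcal E_n$. Fixing $a_0\in\mathcal D_n$, upper semicontinuity of $\tilde\delta_\mathcal E$ gives
\[
\limsup_k\tilde\delta_\mathcal E\bigl(f(a_0),y_k\bigr)\le\tilde\delta_\mathcal E\bigl(f(a_0),y\bigr)<+\infty,
\]
so $(1)$ keeps $\tilde\delta_\mathcal D(a_0,a_k)$ bounded; the contrapositive of hypothesis \eqref{hypo} applied to $\mathcal D$ then keeps $\{a_k\}$ at a positive distance from $\mathcal D_n^{\mathrm c}$. A path-lifting argument based on the local analytic inverses from the openness step produces a subsequence $a_{k_j}\to a\in\mathcal D_n$, and continuity of $f$ yields $f(a)=y\in\Sigma_n$.

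Once bijectivity is secured, $f^{\langle-1\rangle}$ inherits compatibility with direct sums and intertwinings from $f$ by purely algebraic manipulations (using injectivity of $f$ at each level to cancel), and its analyticity is precisely the content of the local-biholomorphism property established in the openness step. The step I expect to be the main obstacle is the closedness argument: converting $\tilde\delta$-boundedness combined with boundedness away from $\partial\mathcal D_n$ into genuine norm convergence of $\{a_k\}$ in a general Banach space is not automatic, and the most honest route seems to be through path-lifting via the local inverses, which has to be carried out carefully. A secondary delicate point is the passage from the preserved $\tilde\delta$ to the preserved infinitesimal $\delta$ at coincident points, which quietly requires continuity, not merely upper semicontinuity, of $\delta$ on the diagonal.
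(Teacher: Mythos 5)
Your approach is genuinely different from the paper's and, as you yourself suspect, it does not close. You attempt a classical open--closed argument: show $f(\mathcal D_n)$ is open via a local inverse function theorem and closed via a boundary estimate plus path-lifting, then invoke connectedness of $\mathcal E_n$. Each of the three technical steps you flag is in fact a real gap, not merely a delicate point. First, the passage from the $\tilde\delta$-equality to the infinitesimal equality $\delta_{\mathcal E}(f(a),f(a))(\Delta f(a,a)(b))=\delta_{\mathcal D}(a,a)(b)$ requires lower semicontinuity of $\delta$ along the diagonal; the paper establishes only upper semicontinuity (Remark \ref{rem-cont}), so from $\delta_{\mathcal D}(a,a-tb)(b)=\delta_{\mathcal E}(f(a),f(a-tb))\bigl((f(a)-f(a-tb))/t\bigr)$ you get matching $\limsup$ bounds on each side, not the claimed identity. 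Second, even granting that identity, Theorem \ref{sep}(ii) gives only that $\Delta f(a,a)$ is injective; in a Banach/operator-space setting injectivity of the derivative is far from the invertibility (bounded inverse, surjectivity onto the target level space) needed to run an inverse function theorem and conclude that $f$ is a local biholomorphism. Third, your closedness argument needs norm-convergence of a subsequence of $\{a_k\}$ out of $\tilde\delta$- or $\tilde{d}$-boundedness plus boundedness away from $\partial\mathcal D_n$; in an infinite-dimensional $\mathcal V$ this is not available, and the proposed ``path-lifting via local inverses'' would require some properness or covering property of $f$ that has not been established and cannot be extracted from hypothesis \eqref{hypo} alone.

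The paper sidesteps all of this with a much shorter and more robust argument. It proves a separate Lemma (Lemma \ref{equality}): if $\mathcal D'\subseteq\mathcal D$ are nc domains both satisfying \eqref{hypo} and $\tilde\delta_{\mathcal D'}=\tilde\delta_{\mathcal D}$ on $\mathcal D'$, then $\mathcal D'=\mathcal D$ --- the proof being a two-line boundary-blow-up contradiction using \eqref{hypo} and upper semicontinuity. Surjectivity then follows by sandwiching: for $x=f(a),y=f(c)$ one has $\tilde\delta_{f(\mathcal D)}(x,y)\le\tilde\delta_{\mathcal D}(a,c)$ from Proposition \ref{contr}, $\tilde\delta_{f(\mathcal D)}(x,y)\ge\tilde\delta_{\mathcal E}(x,y)$ from monotonicity under inclusion, and equality of the two ends by hypothesis $(1)$, so $\tilde\delta_{f(\mathcal D)}=\tilde\delta_{\mathcal E}$ on $f(\mathcal D)$ and the Lemma gives $f(\mathcal D)=\mathcal E$. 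No derivative, no inverse function theorem, no compactness. I would suggest abandoning the open--closed route and replacing it by this comparison of $\tilde\delta_{f(\mathcal D)}$ with $\tilde\delta_{\mathcal E}$; your final remark that $f^{\langle-1\rangle}$ is automatically a noncommutative function once level-wise bijectivity is known is correct and matches the paper.
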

The reader might worry about a trivial counterexample: the map from the nc disk to the
nc bidisk sending $z$ to $(z,0)$. However, we excluded this possibility by the way we formulated our
statement: in this case, the nc disk is equal to its boundary in the ``environment'' in which the bidisk
lives, so according to \eqref{hypo}, its $\tilde\delta$ would have to be constantly equal to infinity. 

\begin{proof} (2)$\implies$(1): This implication is trivial. We know that
$\tilde{\delta}_\mathcal D(a,c)\ge\tilde{\delta}_\mathcal E(f(a),f(c))$. If $a',c'\in\mathcal E$,
then by (2) there exist $a,c\in\mathcal D$ such that $f(a)=a',f(c)=c'$,
which means $f^{\langle-1\rangle}(a')=a,f^{\langle-1\rangle}(c')=c$. By Proposition \ref{contr},
$$
\tilde{\delta}_\mathcal E(f(a),f(c))=\tilde{\delta}_\mathcal E(a',c')\ge\tilde{\delta}_\mathcal D
\left(f^{\langle-1\rangle}(a'),f^{\langle-1\rangle}(c')\right)=\tilde{\delta}_\mathcal D(a,c).
$$

(1)$\implies$(2): We have already seen that under condition (1), $f$ is injective. Thus, we
need to show that $f$ is also surjective. Once we showed that, the noncommutativity of
the correspondence $a'\mapsto f^{\langle-1\rangle}(a')$ allows us to conclude.
The essential part of the proof is in the following quite obvious lemma, 
which we nevertheless state separately, since it might be of independent
interest.

\begin{lemma}\label{equality}
Consider a noncommutative domain $\mathcal D$ and a noncommutative subset $\mathcal D'
\subseteq\mathcal D$. Assume that both $\mathcal D$ and $\mathcal D'$ are invariant under 
conjugation by scalar unitary matrices and satisfy hypothesis \eqref{hypo}. If $\tilde{\delta}_\mathcal D
(a,c)=\tilde{\delta}_{\mathcal D'}(a,c)$ for all $a,c\in\mathcal D'$, then $\mathcal D=\mathcal D'$.
\end{lemma}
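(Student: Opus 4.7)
The plan is to argue by contradiction: suppose $\mathcal D'\subsetneq\mathcal D$ and produce a sequence $\{c_k\}\subset\mathcal D'_n$ converging in norm to a point $c_\infty\in\mathcal D_n\setminus\mathcal D'_n$, then confront hypothesis \eqref{hypo} for $\mathcal D'$ (which forces $\tilde\delta_{\mathcal D'}(a,c_k)\to+\infty$) with the upper semicontinuity of $\tilde\delta_\mathcal D$ at $(a,c_\infty)$ (which keeps $\tilde\delta_\mathcal D(a,c_k)$ bounded). Since the hypothesis of the lemma asserts $\tilde\delta_\mathcal D(a,c_k)=\tilde\delta_{\mathcal D'}(a,c_k)$ for every $k$, this will be contradictory.

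First I would locate a level $n$ at which $\mathcal D'_n$ is both non-empty and a proper subset of $\mathcal D_n$; if no such single level exists, I would manufacture one by tensoring a point of $\mathcal D'$ with a point of $\mathcal D\setminus\mathcal D'$ (using closure under direct sums and the scalar unitary invariance of both sets). At such a level, $\mathcal D'_n$ is a non-empty proper open subset of the connected open set $\mathcal D_n$, hence its relative boundary in $\mathcal D_n$ is non-empty, and every boundary point is a norm-limit of a sequence from $\mathcal D'_n$. I pick $a\in\mathcal D'_n$, a point $c_\infty\in\mathcal D_n\setminus\mathcal D'_n$ lying on that relative boundary, and a sequence $c_k\in\mathcal D'_n$ with $\|c_k-c_\infty\|\to 0$. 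The bound $\inf_{y\in(\mathcal D'_n)^c}\|y-c_k\|\leq\|c_\infty-c_k\|\to 0$ makes hypothesis \eqref{hypo} applicable to $\mathcal D'$ with base point $a$, yielding $\tilde\delta_{\mathcal D'}(a,c_k)\to+\infty$.

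Second, on the $\mathcal D$ side, Remark \ref{rem-cont} applies (since $\mathcal D$ is open at each level) and gives
$$
\limsup_{k\to\infty}\tilde\delta_\mathcal D(a,c_k)\leq\tilde\delta_\mathcal D(a,c_\infty).
$$
The right-hand side is finite: $a\oplus c_\infty\in\mathcal D_{2n}$ by closure under direct sums, and openness of $\mathcal D_{2n}$ supplies an $\epsilon>0$ with $\begin{bmatrix}a & s(a-c_\infty)\\ 0 & c_\infty\end{bmatrix}\in\mathcal D_{2n}$ for $|s|<\epsilon$, whence $\tilde\delta_\mathcal D(a,c_\infty)\leq\epsilon^{-1}<+\infty$. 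Combined with the equality $\tilde\delta_\mathcal D(a,c_k)=\tilde\delta_{\mathcal D'}(a,c_k)$ assumed in the lemma, this yields $+\infty=\lim_k\tilde\delta_{\mathcal D'}(a,c_k)=\lim_k\tilde\delta_\mathcal D(a,c_k)\leq\tilde\delta_\mathcal D(a,c_\infty)<+\infty$, the desired contradiction. The main obstacle is the first step --- arranging the discrepancy $\mathcal D\setminus\mathcal D'$ and a reference point of $\mathcal D'$ at a common level, so that a boundary sequence inside $\mathcal D_n$ exists; once this is in place, the interplay between the blow-up forced by \eqref{hypo} on $\mathcal D'$ and the boundedness forced by openness and upper semicontinuity on $\mathcal D$ is essentially automatic.
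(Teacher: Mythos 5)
Your proof is correct and follows essentially the same approach as the paper: pick $a\in\mathcal D'_n$ and a boundary point $c_\infty\in\mathcal D_n\cap\partial\mathcal D'_n$, approximate $c_\infty$ by a sequence $\{c_k\}\subset\mathcal D'_n$, then let hypothesis \eqref{hypo} for $\mathcal D'$ force $\tilde\delta_{\mathcal D'}(a,c_k)\to+\infty$ while the upper semicontinuity of Remark \ref{rem-cont} and openness of $\mathcal D_n$ keep $\tilde\delta_{\mathcal D}(a,c_k)$ bounded, contradicting the assumed equality. The only addition you make is the explicit justification of a common level and the finiteness of $\tilde\delta_\mathcal D(a,c_\infty)$, both of which the paper leaves implicit.
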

\begin{proof}
The proof of this lemma is utterly trivial: assume towards contradiction that there exist points 
in $\mathcal D\setminus\mathcal D'$. Pick a point $x\in\mathcal D\cap\partial\mathcal D'$ (by
$\partial\mathcal D'$ we understand the boundary of the set $\mathcal D'$ at the corresponding level 
$n$ in the norm topology of the Banach space $\mathcal V^{n\times n}$) and a point $a\in\mathcal D'$.
By the definition of the boundary, there exists a sequence $\{c_k\}_{k\in\mathbb N}\subset\mathcal D'$ 
converging to $x$ in norm. In particular, $\{c_k\}_{k\in\mathbb N}$ satisfies the condition of hypothesis 
\eqref{hypo}, so that $\tilde{\delta}_{\mathcal D'}(a,c_k)\to+\infty$ as $k\to\infty$. By Remark
\ref{rem-cont}, we have
$$
\infty>\tilde{\delta}_{\mathcal D}(a,x)\ge\limsup_{n\to\infty}\tilde{\delta}_{\mathcal D}(a,c_k)=
\limsup_{n\to\infty}\tilde{\delta}_{\mathcal D'}(a,c_k)=\infty,
$$
an obvious contradiction. Thus, $\mathcal D'=\mathcal D$, as claimed.
\end{proof}

Consider the set 
$f(\mathcal D)\subset\mathcal E$. For any $x,y\in f(\mathcal D)$, there exist
unique $a,c\in\mathcal D$ such that $f(a)=x,f(c)=y$. 
It follows from Proposition \ref{contr} that $\delta_{f(\mathcal D)}(x,y)(x-y)\le\delta_\mathcal D(a,c)(a-c)$. 
Since $f(\mathcal D)\subset\mathcal E,$ we necessarily have $\tilde\delta_{f(\mathcal D)}(x,y)
\ge\tilde\delta_\mathcal E(x,y)$. Together with the hypothesis of (1), we obtain
$$
\tilde\delta_\mathcal D(a,c)=\tilde\delta_\mathcal E(f(a),f(c))=
\tilde\delta_\mathcal E(x,y)\le\tilde\delta_{f(\mathcal D)}(x,y)\le\tilde\delta_\mathcal D(a,c),
$$
so that $\tilde\delta_{f(\mathcal D)}(x,y)=\tilde\delta_\mathcal E(x,y)$ for all $x,y\in
f(\mathcal D)$. By Lemma \ref{equality}, we conclude that $f(\mathcal D)=\mathcal E$.
\end{proof}

\begin{remark}\label{bdryblow-up}
It turns out that in Theorem \ref{classif} we cannot dispense with the requirement 
that $\tilde\delta$ blows up at the boundary. The following counterexample, similar to the
one in Remark \ref{MatrixConvexity}, shows what goes wrong if this requirement is dropped.
Consider a domain $D\subseteq\frac12\mathbb D\subset\mathbb C$ and define the
nc set
$$
\mathcal D=\coprod_{n=1}^\infty\{A\in\mathbb C^{n\times n}\colon\sigma(A)\subset D,\|A\|<1\}.
$$
The proof from Remark \ref{MatrixConvexity} applies to show that $\mathcal D$ is a unitarily
invariant noncommutative set which is open at each level. However, a direct computation shows that
$\left\|\begin{bmatrix}
a & b\\ 
0 & c\end{bmatrix}\right\|<1$ if and only if $aa^*+bb^*<1,cc^*<1$, and 
$bc^*(1-cc^*)^{-1}cb^*<1-aa^*-bb^*$ (this holds in an arbitrary $C^*$-algebra). 
Since the other restriction in the definition of $\mathcal D$ is on the spectrum of the
matrix $A$, it only affects $a$ and $c$; there is no other restriction on $b$.
The last inequality is equivalent to
$b((1-c^*c)^{-1}-1)b^*<1-aa^*-bb^*$, which is in its own turn equivalent to
\begin{equation}\label{NCBall}
(1-aa^*)^{-\frac12}b(1-c^*c)^{-1}b^*(1-aa^*)^{-\frac12}<1.
\end{equation}
Thus,
\begin{equation}\label{NCdisk}
\delta_\mathcal D(a,c)(b)=\left\|(1-aa^*)^{-\frac12}b(1-c^*c)^{-\frac12}\right\|
\end{equation}
However, for any choice of selfadjoints $a$ and $c$, we have that $\tilde{\delta}_\mathcal D(a,c)
\leq \frac43$. Thus, $\tilde{\delta}$ stays bounded (by $4/3$) on the intersection of the selfadjoints with
$\mathcal D$.

On the other hand, we have $\mathcal D\subsetneq\mathbb D_{\rm nc}$, the nc unit ball of $\mathbb C$,
and $\delta_{\mathbb D_{\rm nc}}|_\mathcal D=\delta_\mathcal D$.
\end{remark}

In the context of Lemma \ref{equality}, we record here the ``opposite'' case: we show that, under 
certain conditions, strict inclusion of domains leads to strict inequalities between the associated 
distances.

\begin{prop}\label{ka}
Consider an operator space $\mathcal V$. Let $\mathcal D'\subset\mathcal D$ be an inclusion of 
noncommutative domains in $\mathcal V_{\rm nc}$. Assume that 
\begin{enumerate}
\item $M:=\sup_{n\in\mathbb N}\sup_{x\in\mathcal D'_n}\|x\|<+\infty$;
\item $m:=\inf_{n\in\mathbb N}
\inf\{\|x-w\|\colon{x\in\mathcal D'_n,w\in\mathcal V^{n\times n}\setminus\mathcal D_n}\}>0.$
\end{enumerate}
Then there exists a constant $k\in[0,1)$ such that $k\delta_{\mathcal D'}\ge\delta_\mathcal D$. In particular,
$k\tilde\delta_{\mathcal D'}\ge\tilde\delta_\mathcal D$.
\end{prop}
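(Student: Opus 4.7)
The plan is to combine two elementary estimates, each using one of the two hypotheses, and convert an additive gain into the required multiplicative bound. Throughout, fix $n\in\mathbb N$ and $a,c\in\mathcal D'_n$; the case $a=c$ gives $\tilde\delta_{\mathcal D'}(a,c)=\tilde\delta_{\mathcal D}(a,c)=0$ and is trivial, so assume $a\neq c$. Set $M_s:=\begin{bmatrix}a & s(a-c)\\ 0 & c\end{bmatrix}\in\mathcal V^{2n\times 2n}$; I will freely use the operator-space identity $\bigl\|\begin{bmatrix}0 & b\\ 0 & 0\end{bmatrix}\bigr\|_{2n}=\|b\|_n$ (with $b=a-c$), which yields $\|M_{s'}-M_s\|_{2n}=(s'-s)\|a-c\|$.

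Step A (additive expansion, from condition (2) of the proposition). Let $t_0:=\tilde\delta_{\mathcal D'}(a,c)^{-1}$, so that $M_s\in\mathcal D'_{2n}$ for every $s\in[0,t_0)$. Condition (2), applied at level $2n$, gives $B(M_s,m)\subset\mathcal D_{2n}$, and the norm computation above shows that this ball contains $M_{s'}$ for all $s'<s+m/\|a-c\|$. Letting $s\uparrow t_0$ shows that $M_{s'}\in\mathcal D_{2n}$ for every $s'\in[0,t_0+m/\|a-c\|)$, whence
\[
\tilde\delta_{\mathcal D}(a,c)^{-1}\ \ge\ \tilde\delta_{\mathcal D'}(a,c)^{-1}+\frac{m}{\|a-c\|}.
\]

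Step B (diameter bound, from condition (1) of the proposition). Any $s$ admissible in the supremum defining $\tilde\delta_{\mathcal D'}(a,c)^{-1}$ has $M_s\in\mathcal D'_{2n}$, hence $\|M_s\|_{2n}\le M$. Together with $\|M_0\|_{2n}=\max(\|a\|,\|c\|)\le M$ and the triangle inequality applied to $M_s-M_0$, this forces $s\|a-c\|\le 2M$, and so
\[
\tilde\delta_{\mathcal D'}(a,c)\ \ge\ \frac{\|a-c\|}{2M}.
\]

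Combining the two: multiplying Step A through by $\tilde\delta_{\mathcal D'}(a,c)$ and then substituting Step B into the right-hand side gives
\[
\frac{\tilde\delta_{\mathcal D'}(a,c)}{\tilde\delta_{\mathcal D}(a,c)}\ \ge\ 1+\frac{m\,\tilde\delta_{\mathcal D'}(a,c)}{\|a-c\|}\ \ge\ 1+\frac{m}{2M},
\]
so $\tilde\delta_{\mathcal D}(a,c)\le k\,\tilde\delta_{\mathcal D'}(a,c)$ with $k:=\tfrac{2M}{2M+m}\in[0,1)$, as required. The main technical point I expect is the operator-space identity $\bigl\|\begin{bmatrix}0 & b\\ 0 & 0\end{bmatrix}\bigr\|_{2n}=\|b\|_n$ used in both steps; once in hand (it follows at once from $\|SxT\|\le\|S\|\|x\|\|T\|$ applied to the factorization $\begin{bmatrix}0 & b\\0 & 0\end{bmatrix}=\begin{bmatrix}I_n\\0\end{bmatrix}b\begin{bmatrix}0 & I_n\end{bmatrix}$ and to the reverse compression), the rest is a formal manipulation. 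The degenerate possibility $\tilde\delta_{\mathcal D'}(a,c)=+\infty$ is ruled out by the openness of $\mathcal D'_n$ at the underlying level, and in any case the inequality holds trivially there.
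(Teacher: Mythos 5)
Your proof is correct and follows essentially the same two-step strategy as the paper: an additive gain of $m/\|a-c\|$ in $\tilde\delta^{-1}$ from the distance hypothesis, combined with a lower bound on $\tilde\delta_{\mathcal D'}$ from the diameter bound $M$. The only difference is that in Step B you pass through the triangle inequality on $\|M_s-M_0\|$ rather than bounding the $(1,2)$ corner of $M_s$ directly by $\|M_s\|\le M$, which gives $\tilde\delta_{\mathcal D'}(a,c)\ge\|a-c\|/(2M)$ and the slightly weaker constant $k=2M/(2M+m)$ instead of the paper's $k=M/(M+m)$; both lie in $[0,1)$, so the conclusion is unaffected.
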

\begin{proof}
Let $n$ be a fixed level, and pick $a,c\in\mathcal D'_n$, $b\in\mathcal V^{n\times n},b\neq0$. By definition,
$$
\delta_{\mathcal D'}(a,c)(b)^{-1}=\sup\left\{t>0\colon\begin{bmatrix}
a & sb\\
0 & c
\end{bmatrix}\in\mathcal D'_{2n} \text{ for all }s<t\right\},
$$
$$
\delta_{\mathcal D}(a,c)(b)^{-1}=\sup\left\{t>0\colon\begin{bmatrix}
a & rb\\
0 & c
\end{bmatrix}\in\mathcal D_{2n} \text{ for all }r<t\right\}.
$$
We know that the distance from $\mathcal D'_{2n}$ to $\mathcal V^{2n\times 2n}\setminus\mathcal 
D_{2n}$ is at least $m$, so that
$$
(\delta_{\mathcal D}(a,c)(b)^{-1}-s)\|b\|=\left\|\begin{bmatrix}
a & sb\\
0 & c
\end{bmatrix}-\begin{bmatrix}
a & \delta_{\mathcal D}(a,c)(b)^{-1}b\\
0 & c
\end{bmatrix}\right\|\ge m
$$
whenever $\begin{bmatrix}
a & sb\\
0 & c
\end{bmatrix}\in\mathcal D'_{2n}$,
and thus, $\delta_{\mathcal D}(a,c)(b)^{-1}-\delta_{\mathcal D'}(a,c)(b)^{-1}\ge
\frac{m}{\|b\|}$ for any $a,c\in\mathcal D'_{n},b\neq0$. It follows that
$$
\frac{\delta_{\mathcal D'}(a,c)(b)}{\delta_{\mathcal D}(a,c)(b)}\ge
1+\frac{m\delta_{\mathcal D'}(a,c)(b)}{\|b\|}=1+m
\delta_{\mathcal D'}(a,c)\left(\frac{b}{\|b\|}\right).
$$
We bound from below $\delta_{\mathcal D'}(a,c)(b)$ when $\|b\|=1$ and $a,c\in\mathcal D'_{2n}$.
We have $\delta_{\mathcal D'}(a,c)(b)>\xi\iff \delta_{\mathcal D'}(a,c)(b)^{-1}<\xi^{-1}$; but
any element in $\mathcal D'_{2n}$ has norm bounded from above by $M$, so $\begin{bmatrix}
a & sb\\
0 & c
\end{bmatrix}\in\mathcal D'_{2n}$ implies $|s|=\|sb\|\leq M$. Thus, $\delta_{\mathcal D'}(a,c)(b)\ge
M^{-1}.$ We obtain 
$
\frac{\tilde\delta_{\mathcal D'}(a,c)}{\tilde\delta_{\mathcal D}(a,c)}\ge
1+\frac{m}{M},
$
for the constant $k=\frac{M}{m+M}<1$. Taking $b=a-c,a\neq c$, yields the result for $\tilde\delta$.
\end{proof}

For our four distances, we have
\begin{cor}\label{kar}
Under the assumptions, and with the notations, of Proposition \ref{ka},
we have $k{d}_{\mathcal D'}\ge{d}_{\mathcal D}|_{\mathcal D'}, 
k{d}_{\mathcal D',\infty}\ge{d}_{\mathcal D,\infty}|_{\mathcal D'},
k\tilde{d}_{\mathcal D'}\ge\tilde{d}_{\mathcal D}|_{\mathcal D'}$, and 
$k\tilde{d}_{\mathcal D',\infty}\ge\tilde{d}_{\mathcal D,\infty}|_{\mathcal D'}.$
\end{cor}
\begin{proof}
For any $n\in\mathbb N$, $a,c\in\mathcal D'_{n}, p\in\mathbb N$, and division $I_p\otimes a=a_0,a_1,\dots,a_N=I_p\otimes c\in
\mathcal D'_{pn}$, we have
$$
k\sum_{j=1}^N\tilde\delta_{\mathcal D'}(a_{j-1},a_j)>\sum_{j=1}^N\tilde\delta_{\mathcal D}(a_{j-1},a_j).
$$
Taking infimum after all divisions at all levels $p\in\mathbb N$ in the left hand side provides $k\tilde{d}_{\mathcal D',\infty}(a,c)$. Increasing the number of divisions
in the right hand side can only decrease the infimum, so that $k\tilde{d}_{\mathcal D',\infty}
(a,c)\ge\tilde{d}_{\mathcal D,\infty}(a,c)$. Same argument, with ``divisions'' replaced by ``continuously differentiable paths''
yields the result for ${d}_{\mathcal D',\infty}(a,c),{d}_{\mathcal D,\infty}(a,c)$.
The result for $d,\tilde{d}$ is proved the same way, except we do not take infimum after $p$.
\end{proof}

As a side benefit, we obtain from the proof of Proposition \ref{ka} that on bounded domains in
operator spaces, $\tilde\delta$ and the norm are locally equivalent. We have already seen in 
Proposition \ref{dreiwolf} that if $\|a_k-a\|\to0$, then $\tilde\delta_\mathcal D(a_k,a)\to0$
and thus $\tilde{d}_\mathcal D(a_k,a)\to0$. Now assume that in a bounded domain $\mathcal D$
we have a sequence $\{a_k\}_{k\in\mathbb N}\subset\mathcal D$ and a point $a\in\mathcal D_n$
so that $\tilde{d}_\mathcal D(a_k,a)\to0$ as $k\to\infty$. We have seen in the proof of
Proposition \ref{ka} that $\delta_\mathcal D(a,c)(b)\ge M^{-1}$ if $\mathcal D$ is included
in a norm-ball of radius $M$, uniformly in $a,c\in\mathcal D_n,b\in\mathcal V^{n\times n}$, $\|b\|=1$,
$n\in\mathbb N$. Thus, $\tilde\delta_{\mathcal D}(a,c)\ge M^{-1}\|a-c\|$, so that
for any division $a=a_0,a_1,\dots,a_N=c$ of $\tilde\delta_{\mathcal D}(a,c)$, we have
$\sum_{j=1}^N\tilde\delta_{\mathcal D}(a_{j-1},a_j)\ge M^{-1}\sum_{j=1}^N\|a_j-a_{j-1}\|
\ge M^{-1}\|a-c\|.$ Thus, $\tilde{d}_\mathcal D(a,c)\ge M^{-1}\|a-c\|$. Applying this to 
$c=a_k$ yields $\lim_{k\to\infty}\|a-a_k\|=0$. Obviously, nothing changes if we amplify $a,c$ by $I_p$.
We have proved

\begin{prop}\label{coincidence}
If $\mathcal D$ is a bounded nc domain in an operator space $\mathcal V$ and $n\in\mathbb N$, then 
on any subset $A\subset\mathcal D_n$ which is at a positive distance from $\mathcal D_n^c$, the 
topologies induced by $d_\mathcal D,d_{\mathcal D,\infty},\tilde{d}_\mathcal D,\tilde{d}_{\mathcal D,\infty}$,
and the norm of $\mathcal V^{n\times n}$ coincide.
\end{prop}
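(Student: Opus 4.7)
The plan is to assemble the two one-sided bounds already established in the paragraph immediately preceding the statement, and then invoke first-countability to pass from sequential convergence to the equality of topologies.

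First I would handle the inclusion ``norm-topology $\subseteq$ $\tilde{d}_{\mathcal D}$-topology.'' This is exactly the content of Proposition \ref{dreiwolf} combined with $\tilde{d}_{\mathcal D}\leq\tilde{\delta}_{\mathcal D}$: for any sequence $\{a_k\}\subseteq A\subseteq\mathcal D_n$ with $\|a_k-a\|\to 0$, upper semicontinuity of $\tilde{\delta}_{\mathcal D}$ at $(a,a)$ yields $\tilde{\delta}_{\mathcal D}(a_k,a)\to 0$, and since $\tilde{d}_{\mathcal D}\leq\tilde{\delta}_{\mathcal D}$ we conclude $\tilde{d}_{\mathcal D}(a_k,a)\to 0$. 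No further input is needed for this half.

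Second I would handle the reverse inclusion ``$\tilde{d}_{\mathcal D}$-topology $\subseteq$ norm-topology.'' Here the key is the lower bound obtained just above the statement: since $\mathcal D\subseteq\{x:\|x\|<M\}$ for some $M<\infty$, one has $\delta_{\mathcal D}(a,c)(b)\geq M^{-1}$ whenever $\|b\|=1$, hence $\tilde{\delta}_{\mathcal D}(a,c)\geq M^{-1}\|a-c\|$ for all $a,c\in\mathcal D_n$, and summing along divisions gives
$$\tilde{d}_{\mathcal D}(a,c)\geq M^{-1}\|a-c\|.$$
In particular $\tilde{d}_{\mathcal D}$ is a genuine metric on $\mathcal D_n$, and any sequence $\tilde{d}_{\mathcal D}$-converging in $A$ also converges in norm, with the uniform modulus $\|a_k-a\|\leq M\,\tilde{d}_{\mathcal D}(a_k,a)$.

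To conclude, both topologies on $A$ (being restrictions of metric topologies on $\mathcal V^{n\times n}$ to the subset $A$) are first-countable, so sequential convergence determines open sets; since the two notions of convergence coincide on $A$, so do the two topologies. The hypothesis that $A$ sit at positive distance from $\mathcal D_n^{\mathrm c}$ is not actually needed for either inclusion---the inequality $\tilde{d}_{\mathcal D}\geq M^{-1}\|\cdot-\cdot\|$ holds throughout $\mathcal D_n$ and Proposition \ref{dreiwolf} applies at every point of $\mathcal D_n$---but it guarantees that $\tilde{d}_{\mathcal D}$ remains finite on $A$, which is the natural setting where the statement is non-vacuous. There is no serious obstacle: the entire argument is a bookkeeping exercise combining the results already in hand.
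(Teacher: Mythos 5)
Your proof is correct and follows essentially the same route as the paper: the forward direction via Proposition \ref{dreiwolf} together with $\tilde{d}_\mathcal D\leq\tilde\delta_\mathcal D$, and the reverse via the uniform lower bound $\tilde{d}_\mathcal D(a,c)\geq M^{-1}\|a-c\|$ drawn from the proof of Proposition \ref{ka}, after which the equivalence of the metric topologies follows. Your remark that the positive-distance hypothesis plays no role in either inclusion matches the paper's own argument, which likewise never invokes it.
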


\begin{remark}
A very similar proof shows that the result stated in Proposition \ref{coincidence} holds also
for bounded strict subsets of half-planes. 
\end{remark}

These results yield easily a noncommutative version of the Earle-Hamilton Theorem \cite{EH}.
The proof of \cite[Theorem 2]{H} translates verbatim (with, in the notations of \cite{H}, 
$\alpha(x,v)$ replaced by $\delta_\mathcal D(x,x)(v)$ and the norm unit ball in a Banach space replaced by the nc unit ball in an operator space).

\begin{prop}
Let $\mathcal V,\mathcal D$, and $\mathcal D'$ be as in Proposition \ref{ka}.
If $f\colon\mathcal D\to\mathcal D'$ is a noncommutative function, then there exists a unique
attracting fixed point $w\in\mathcal D'$ for $f$ in the sense of \cite{AKV}.
\end{prop}

\section{An application to a problem in free probability}
\label{sec:six}

In this section, we use some of the tools introduced before in order to study a problem in 
free probability. We consider a $W^*$-noncommutative probability space $(M, E,B)$, where 
$B\subseteq M$ is a unital inclusion of $W^*$-algebras and $E\colon M\to B$
is a weakly-continuous unit-preserving conditional expectation. Elements in $M$ are called operator-valued
(or, sometimes, $B$-valued) random variables. If $X=X^*\in M$, we define the 
{\em distribution of $X$ with respect to $E$} to be the collection of multilinear maps
$$
\mu_X=\{m_{n,X},n\in\mathbb N\},
$$
called {\em moments}, where $m_{0,X}=1\in B\subseteq M$, $m_{1,X}=E[X]\in B$, and
$$
m_{n,X}\colon\underbrace{B\times\cdots\times B}_{n-1\textrm{ times}}\to B,
\ m_{n,X}(b_1,\dots,b_{n-1})=E[Xb_1Xb_2\cdots Xb_{n-1}X],n>1.
$$
Such distributions are encoded analytically by the noncommutative Cauchy-Stieltjes
transform (see Example \ref{example21}(3)):
$$
G_{X,n}(b)=({\rm Id}_{\mathbb C^{n\times n}}\otimes E)\left[(b-I_n\otimes X)^{-1}\right],\quad n\in\mathbb N,b\in B^{n\times n},\Im b>0.
$$
This is a noncommutative function mapping the noncommutative upper half-plane
of $B$ into the noncommutative lower half-plane (see, for instance, \cite{V2}).
It has several good properties, including the fact that $\Im G_{X,n}(b)<0$, so that
$F_{X,n}(b):=G_{X,n}(b)^{-1}$ exists and maps elements of positive imaginary part 
into elements of positive imaginary part. Moreover, it has been shown in
\cite{BPV1} that $\Im F_{X,n}(b)\ge\Im b$, so that $h_{X,n}(b):=F_{X,n}(b)-b,$
$\Im b>0$, takes values elements of nonnegative imaginary part.

It has been shown in \cite{ABFN} that for any given selfadjoint $X\in M$ 
and completely positive map $\rho\colon B\to B$ such that $\rho-
\mathrm{Id}_B$ is still completely positive on $B$, there exists
a selfadjoint $X_\rho$ in a possibly larger $W^*$-algebra 
containing $M$ such that $E$ extends to this possibly
larger algebra and the following relations hold:
\begin{equation}\label{semigroup}
G_{X_\rho,n}(b)=G_{X,n}(\omega_\rho(b)),\quad\omega_\rho(b)
=b+(\rho-
\mathrm{Id}_B)h_{X,n}(\omega_\rho(b)),\quad\Im b>0,n\in\mathbb N.
\end{equation}
In terms of the free probability significance of $X_\rho$, we only
mention that $\mu_{X_\rho}=\mu_X^{\boxplus\rho}$, and refer 
the interested reader to \cite{ABFN} for details. We wish to
mention, however, that, thanks to a trick due to Hari Bercovici, 
understanding free convolution powers indexed by 
completely positive maps suffices in order to 
understand free additive convolutions of operator-valued
distributions, so, in a certain sense, $\{\mu_X^{\boxplus\rho}\colon
\rho\textrm{ and }\rho-
\mathrm{Id}_B\textrm{ completely positive}\}$ is the most
general object to be understood in the context of free 
convolutions of operator-valued distributions. 

All of the above has been done for selfadjoint
operators that belong to $M$, that is, bounded 
selfadjoint operators. We will apply our results in 
order to show that, under certain hypotheses,
this can be also done for unbounded operators 
$X=X^*$ affiliated to $M$, generalizing the results of 
\cite{BVIUMJ} to the operator-valued context. We make 
the following hypotheses regarding $X,M$, and $B$:
\begin{enumerate}
\item[(H1)] $B$ and $X$ generate a ${}^*$-algebra of possibly unbounded
densely defined operators $B\langle X\rangle$, such that the spectral 
projections of any selfadjoint element of $B\langle X\rangle$
belong to $M$. In particular, the (classical) distribution of any selfadjoint 
element from $B\langle X\rangle$ with respect to any weakly continuous state on 
$M$ must be a probability measure (we assume weakly continuity for states from now on);
\item[(H2)] $E\left[\Im(b-X)^{-1}\right]<0$ whenever $\Im b>0$ in $B$.
\end{enumerate}
Hypothesis (H1) is very natural, and allows us to extend the notion of
$B$-valued distribution of $X$ with respect to $E$ to the case when $X=X^*$
is unbounded, but affiliated to $M$. Thus,

\begin{defn}\label{distri}
Let $(M, E,B)$ be a $W^*$-noncommutative probability space and $X=X^*$ be
affiliated to $M$, possibly unbounded. Assume that $X,M,$ and $B$ satisfy Hypothesis
(H1). We define the $B$-valued distribution $\mu_X$ of $X$ with respect to $E$ to be the 
collection of Borel probability measures on $\mathbb R$, $\{\mu_{P,\varphi}\colon P=P^*\in B\langle X\rangle,
\varphi\colon B\to\mathbb C\text{ state}\},$
where $\mu_{P,\varphi}$ is determined via the following equality:
$$
\int_{\mathbb R}\frac{1}{z-t}\,{\rm d}\mu_{P,\varphi}(t)=\varphi\left(E\left[(z-P)^{-1}\right]\right),\quad z\in\mathbb C\setminus\mathbb R.
$$
\end{defn}

The above objects are indeed well-defined: by (H1), $(z-P)^{-1}\in M$ is a 
bounded, normal operator. Thus, one may apply $E$ to it in order to obtain
an element of $B$. Since $B$ is a $C^*$-algebra, the set of states on $B$ 
separates its elements: if $b,b'\in B$ are such that $\varphi(b)=\varphi(b')$ 
for all states $\varphi$ on $B$, then $b=b'$. Since for any fixed state $\theta$
on $M$ we have $\lim_{y\to0}\theta(P(iy-P)^{-1})=0$, it follows that
$\mu_{P,\varphi}$ is a probability measure. Moreover, if $X=X^*$ is 
bounded, then the notion of distribution from Definition \ref{distri}
coincides with the classical notion of $B$-valued distribution. Indeed, 
as any element in a $C^*$-algebra is the linear combination of two 
selfadjoint elements of the same  $C^*$-algebra, it follows that 
the collection of moments of $\mu_X$ is determined by the values
$E$ takes on selfadjoint polynomials $P\in B\langle X\rangle$, and
vice-versa. Since elements $b\in B$ are uniquely determined by $\{
\varphi(b)\colon\varphi\colon B\to\mathbb C\text{ state}\}$, we 
conclude that the two definitions are equivalent for bounded variables $X=X^*\in M$.

 Hypothesis (H1) is clearly satisfied under the assumption that $M$ is a finite factor
and $E$ is the trace-preserving conditional expectation.

Hypothesis (H2) deserves a few more comments. It is natural in terms
of allowing for the analytic functions tools (including the $R$-transform
of Voiculescu - see \cite{V*,VFAQ2}) to be deployed. But it can be also
viewed as a measure of nondegeneracy of $E$: indeed, let $b=u+iv,
u=u^*,v>0$. Then 
\begin{eqnarray*}
E\left[\Im(b-X)^{-1}\right] & = & E\left[\Im ((u-X)+iv)^{-1}\right]\\
& = & -v^{-\frac12}
E\left[\left(\left(v^{-\frac12}(u-X)v^{-\frac12}\right)^2+1\right)^{-1}\right]
v^{-\frac12},
\end{eqnarray*}
so that $E\left[\Im(b-X)^{-1}\right]<0$ if and only if 
$E\left[\left(\left(v^{-\frac12}(u-X)v^{-\frac12}\right)^2+1\right)^{-1}\right]>0$. 
It is clear that, since $X$ is unbounded, $\min
\sigma\left(\left(\left(v^{-1/2}(u-X)v^{-1/2}\right)^2+1\right)^{-1}\right)=0$. Also,
$k:=\left\|\left(\left(v^{-1/2}(u-X)v^{-1/2}\right)^2+1\right)^{-1}\right\|\leq1$.
Thus, non-invertibility of $E\left[\Im(b-X)^{-1}\right]$ becomes equivalent
to the equality
$$
\left\|E\left[k-\left(\left(v^{-\frac12}(u-X)v^{-\frac12}\right)^2+1\right)^{-1}\right]\right\|
=\left\|k-\left(\left(v^{-\frac12}(u-X)v^{-\frac12}\right)^2+1\right)^{-1}\right\|.
$$
That is, $E$ is isometric on a positive element which is not in $B$. Thinking in terms of the 
duals of $M$ and $B$, respectively, this tells us that there exists an element $\varphi$
of norm one in the dual $B^*$ of $B$ such that 
$\left(\left(v^{-\frac12}(u-X)v^{-\frac12}\right)^2+1\right)^{-1}-k$ 
reaches its norm on $\varphi\circ E$. Thus, hypothesis (H2) is implied
by the requirement that positive elements in $M$ but not in $B$ do not reach
their norms on $B^*\circ E$. It may be worth mentioning that in the case of
a tracial $W^*$-probability space with normal faithful trace state $\tau$ 
which is left invariant by $E$, hypothesis (H2) comes to stating that 
$\left(\left(v^{-\frac12}(u-X)v^{-\frac12}\right)^2+1\right)^{-1}-k$
does not reach its norm on $L^2(B,\tau)$, and in the case when $B$ is
finite dimensional, (H2) is equivalent to not allowing algebraic relations 
between $X$ and elements in $B$.

In this section, we shall show that 
the fixed point equation \eqref{semigroup}
has a nontrivial solution also when $X$ is unbounded, but
still satisfies hypotheses (H1) and (H2) above. While, according to
 \cite{Wil,PPT}, the noncommutative extension $G_{X,n}(b)=({\rm Id}_{\mathbb C^{n\times n}}\otimes E)\left[(b-I_n\otimes X)^{-1}\right]$, $n\in\mathbb N$, does
not characterize the distribution of $X$ for all possible unbounded random variables $X$ as above, it follows quite easily 
(see, for instance, \cite{HT}) that the extension $\{\alpha\in\mathbb C^{n\times n}\colon\alpha=\alpha^*\}\times H^+_n(B)\ni(\alpha,b)\mapsto G_{\alpha\otimes X}(b)
=({\rm Id}_{\mathbb C^{n\times n}}\otimes E)\left[(b-\alpha\otimes X)^{-1}\right]$, $n\in\mathbb N$,
does encode the distribution of $X$ with respect to $E$, as defined in Definition \ref{distri}. For the reader's comfort, we will sketch the argument in Remark \ref{li} below.

\begin{thm}\label{semigr}
Consider a noncommutative function $h\colon H^+(B)\to B$ such that $\Im h(b)\ge0$  
and $\lim_{y\to+\infty}\frac{\Im h(\Re b+iy\Im b)}{y}=0$ in the {\rm wo}-topology for all $b\in H^+(B)$.
For any given $b>0$, the map $w\mapsto b+h(w)$ has a unique attracting
fixed point in $H^+(B)$, to be denoted by $\omega(b)$, and the correspondence 
$b\mapsto\omega(b)$ is a noncommutative self-map of $H^+(B)$, hence,
in particular, analytic.
\end{thm}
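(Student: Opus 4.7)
The plan is to construct $\omega(b)$ as the limit of the iterates of $T_b(w) := b + h(w)$, using the noncommutative pseudometric $\tilde{d}_{H^+(B)}$ of Section \ref{pseudodistance} as the main tool. Note that $T_b$ sends $H^+(B)$ into itself, since $\Im T_b(w) = \Im b + \Im h(w) \geq \Im b > 0$, and $T_b$ inherits from $h$ all the properties of a noncommutative map at each fixed level, so by Corollary \ref{trentatre} it is automatically a $\tilde{d}_{H^+(B)}$-contraction.

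The core step is to exhibit a $T_b$-invariant bounded subdomain $\mathcal{D}'_b \subset H^+(B)$ at positive norm-distance from $\partial H^+(B)$. The lower bound $\Im T_b(w) \geq \Im b$ already places $T_b(H^+(B))$ at distance at least $\|(\Im b)^{-1}\|^{-1}$ from $\partial H^+(B)$; the difficulty is obtaining a uniform norm upper bound. This is exactly where the sublinear growth hypothesis $y^{-1}\Im h(\Re b + iy\Im b) \to 0$ enters: by a Nevanlinna--Herglotz-type representation for nc functions with nonnegative imaginary part, wo-vanishing along the ray forces the ``linear part at infinity'' of $h$ to be zero, and from this one extracts that $\|h(w)\|$ is $o(\|w\|)$ along $\Re b + iy\Im b$ as $y \to \infty$, and (refining by standard nc analytic estimates) uniformly on suitably positioned bounded pieces of the shifted half-plane $\{w : \Im w \geq \Im b\}$. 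One thus obtains $R = R(b) > 0$ such that
\[
\mathcal{D}'_b := \bigl\{w \in H^+(B)_n : \Im w \geq \Im b,\ \|w\| \leq R\bigr\}
\]
is invariant under $T_b$. I expect this construction to be the main technical hurdle.

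With $\mathcal{D}'_b$ in hand, Proposition \ref{ka} yields $k = k(b) \in [0,1)$ with $k\tilde{\delta}_{\mathcal{D}'_b} \geq \tilde{\delta}_{H^+(B)}$. Combined with Corollary \ref{trentatre} applied to $T_b\colon \mathcal{D}'_b \to \mathcal{D}'_b$, this gives the strict contraction
\[
\tilde{\delta}_{H^+(B)}(T_b u, T_b v) \leq k\,\tilde{\delta}_{H^+(B)}(u,v) \qquad (u,v \in \mathcal{D}'_b),
\]
and, through Corollary \ref{kar}, the same factor for $\tilde{d}_{H^+(B)}$. Iterating $T_b$ from any $w_0 \in \mathcal{D}'_b$ produces a $\tilde{d}_{H^+(B)}$-Cauchy sequence which, by Proposition \ref{coincidence}, is also norm-Cauchy on $\mathcal{D}'_b$; its limit $\omega(b) \in \mathcal{D}'_b$ is a fixed point of $T_b$ by continuity. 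Uniqueness and the attracting property follow from the strict contraction together with Theorem \ref{sep}: $H^+(B)$ contains no complex lines (any affine line would eventually leave the half-plane as the complex parameter rotates), so $\tilde{d}_{H^+(B)}$ separates points, and any two fixed points $w_1, w_2$ (both lying in $\mathcal{D}'_b$ after one application of $T_b$) would satisfy $\tilde{d}_{H^+(B)}(w_1, w_2) \leq k\tilde{d}_{H^+(B)}(w_1, w_2)$, forcing $w_1 = w_2$.

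It remains to verify that $b \mapsto \omega(b)$ is a noncommutative self-map. For direct sums, $\omega(b_1) \oplus \omega(b_2)$ is a fixed point of $T_{b_1 \oplus b_2}$ by the nc property of $h$, hence equals $\omega(b_1 \oplus b_2)$ by uniqueness; the same argument handles conjugation by scalar invertibles. Uniform local boundedness of $\omega$ on a neighborhood of any $b_0 \in H^+(B)$ is inherited from locally uniform control on $R(b)$ as $b$ varies, and analyticity is then automatic from Corollary 7.28 of \cite{ncfound}.
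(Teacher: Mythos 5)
Your core step --- constructing a $T_b$-invariant bounded domain $\mathcal{D}'_b=\{w\colon\Im w\ge\Im b,\ \|w\|\le R\}$ directly from the hypotheses on $h$ --- is exactly the step the paper explicitly flags as not working: ``Relation \eqref{strict} would allow us easily to prove such an existence $\ldots$ if we could somehow guarantee the boundedness of the iterates $\{h_0^{\circ N}(a)\}$. Unfortunately, this does not seem possible to do in a direct way.'' And indeed the claim that $\|h(w)\|=o(\|w\|)$ uniformly on $\{\Im w\ge\Im b,\ \|w\|\le R\}$ is false already for scalar Nevanlinna functions. The hypothesis $\Im h(\Re b+iy\Im b)/y\to 0$ controls $h$ along a \emph{vertical} ray (a nontangential approach to $\infty$), but says nothing about growth along \emph{horizontal} lines $\Im w=\Im b$, which is what your invariance argument needs. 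Concretely, take $B=\mathbb{C}$ and $h(z)=\sum_{n\ge1}\frac{c}{n^3}\cdot\frac{1+n^3z}{n^3-z}$: this is a Nevanlinna function, $\Im h(iy)/y=\sum_n\frac{c}{n^3}\cdot\frac{1+n^6}{n^6+y^2}\to0$, and yet for $z=N^3+iy_0$ with $y_0>0$ fixed one computes $\Im h(z)\sim cN^3/y_0\sim c|z|/y_0$. Thus $\|b+h(w)\|$ is \emph{not} bounded by $R$ on $\{\Im w\ge y_0,\ \|w\|\le R\}$ for any $R$, and the set $\mathcal{D}'_b$ you need does not exist.

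The paper's actual existence argument (Proposition~\ref{horro}) is of a fundamentally different nature. One passes to $k_0(a):=-h_0(-a^{-1})^{-1}$, which --- because $\Im h_0\ge\varepsilon_0 1$ --- has \emph{bounded} range $k_0(H^+(B))\subseteq\{w\colon\|w-i(2\varepsilon_0)^{-1}1\|<(2\varepsilon_0)^{-1}\}$. The fixed-point equation $x=a+k_0(x)$ then does admit a bounded invariant domain for $a\in H^+(B)$, but the desired fixed point for $h_0$ corresponds to $a=0$, i.e. a \emph{boundary} value of $a$. Establishing that the solution $x(a)$ survives in the limit $a\to0$ is done via noncommutative Julia--Carath\'eodory theory and the horodisk machinery of \cite{B-CAOT} (Biane--Huang type arguments on the quantity $t_{m,n}$, the dichotomy between $t_{0,n}=0$ and $t_{0,n}>0$, and the invariance $k_0(\mathcal H(0,n))\subseteq\mathcal H(0,n)$, eventually contradicting the sublinear-growth hypothesis). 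Once existence is known, the paper does use a bounded invariant set --- namely the Julia ball $B(\omega(b_0),2r)$ of \cite[Proposition~3.2]{JLMS} \emph{centered at the fixed point} --- which is where the strict contraction and the rest of your scheme (uniqueness, attraction, the implicit-function-theorem step for analyticity via \cite{AKV}) do go through. In short, the part of your argument after ``with $\mathcal{D}'_b$ in hand'' matches the paper in spirit, but the construction of $\mathcal{D}'_b$ itself is the entire difficulty of the theorem, it is not addressed, and as formulated it is incorrect.
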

\begin{proof}
It is clearly enough to prove the theorem at level one: fix
$b_0\in B$ such that $\Im b_0>\varepsilon_01>0$. For any $n\ge1$, 
the map $h_0\colon w\mapsto b_0\otimes I_n+h(w)$ sends $H^+(B^{n\times n})$
into $H^+(B^{n\times n})+i\varepsilon_01,$ so that, as a 
noncommutative map, it sends $H^+(B)$ to 
$H^+(B)+i\varepsilon_01.$ We re-write the proof of Corollary \ref{trentatre}
for this context: if $\Im\begin{bmatrix} a & b \\ 0 & c\end{bmatrix}
\in H^+(B^{2\times 2})$, then $\Im h_0\left(\begin{bmatrix} 
a & b \\ 0 & c\end{bmatrix}\right)
\in H^+(B^{2\times 2})+i\varepsilon_01.$ That means
$(\Im h_0(a)-\varepsilon_01)^{-1/2}\Delta h_0(a,c)(b)
(\Im h_0(c)-\varepsilon_01)^{-1}\Delta h_0(a,c)(b)^*
(\Im h_0(a)-\varepsilon_01)^{-1/2}\leq\|(\Im a)^{-1/2}b(\Im c)^{-1/2}\|^2\cdot1$
for all $a,c\in H^+(B)$, $b\in B$. We re-write this as
$$
\Delta h_0(a,c)(b)
(\Im h_0(c)-\varepsilon_01)^{-1}\Delta h_0(a,c)(b)^*\leq
\|(\Im a)^{-\frac12}b(\Im c)^{-\frac12}\|^2(\Im h_0(a)-\varepsilon_01).
$$
Multiplying left and right by $(\Im h_0(a))^{-1/2}$,  we obtain
\begin{eqnarray*}
\lefteqn{(\Im h_0(a))^{-1/2}\Delta h_0(a,c)(b)
(\Im h_0(c)-\varepsilon_01)^{-1}\Delta h_0(a,c)(b)^*(\Im h_0(a))^{-1/2}}\\
& \leq & \|(\Im a)^{-\frac12}b(\Im c)^{-\frac12}\|^2(1-\varepsilon_0(\Im h_0(a))^{-1})\\
& \leq & \|(\Im a)^{-\frac12}b(\Im c)^{-\frac12}\|^2\|1-\varepsilon_0(\Im h_0(a))^{-1}\|\\
& = & \|(\Im a)^{-\frac12}b(\Im c)^{-\frac12}\|^2(1-\varepsilon_0\|(\Im h_0(a))^{-1}\|).
\quad\quad\quad\quad\quad\quad\quad\quad\quad
\end{eqnarray*}
Since $xx^*\leq M\cdot1\iff x^*x\leq M\cdot1$, we immediately 
obtain 
\begin{eqnarray}
\lefteqn{(\Im h_0(a))^{-1/2}\Delta h_0(a,c)(b)
(\Im h_0(c))^{-1}\Delta h_0(a,c)(b)^*(\Im h_0(a))^{-1/2}}\nonumber\\
& \leq & \|(\Im a)^{-\frac12}b(\Im c)^{-\frac12}\|^2
(1-\varepsilon_0\|(\Im h_0(a))^{-1}\|)
(1-\varepsilon_0\|(\Im h_0(c))^{-1}\|).\label{strict-deriv}
\end{eqnarray}
Applying this to $b=a-c$ yields
\begin{eqnarray}
\lefteqn{(\Im h_0(a))^{-1/2}(h_0(a)-h_0(c))
(\Im h_0(c))^{-1}(h_0(a)-h_0(c))^*(\Im h_0(a))^{-1/2}}\nonumber\\
& \leq & \|(\Im a)^{-\frac12}(a-c)(\Im c)^{-\frac12}\|^2
(1-\varepsilon_0\|(\Im h_0(a))^{-1}\|)
(1-\varepsilon_0\|(\Im h_0(c))^{-1}\|).\label{strict}
\end{eqnarray}
It thus follows that if $\omega(b_0)\in H^+(B)+i\varepsilon_01$ 
is a fixed point for $h_0$, then it must be the unique and 
attracting fixed point of $h_0$. Indeed, for an arbitrary 
$a\in H^+(B)$, if we let 
$r=\|(\Im a)^{-1/2}(a-\omega(b_0))(\Im\omega(b_0))^{-1/2}\|$,
it follows that 
$$
h_0(B(\omega(b_0),2r))\subset B(\omega(b_0),2r),
$$
where, as in \cite[Proposition 3.2]{JLMS}, we denote
\begin{equation}\label{pseudoball}
B(c,t)=\{a\in B\colon\|(\Im a)^{-1/2}(a-c)(\Im c)^{-1/2}\|\leq t\}.
\end{equation}
As shown in \cite[Proposition 3.2]{JLMS},
$B(\omega(b_0),2r)$ is bounded in norm in the sense that
$$
\|d\|\leq\|\Re\omega(b_0)\|+\|\Im\omega(b_0)\|\left(
2r^2+1+2r\sqrt{r^2+1}+2r\sqrt{2r^2+1+2r\sqrt{r^2+1}}\right)
,$$
and that it is bounded away from the boundary of $H^+(B)$ in the sense that
$$
\Im d\ge({2+4r^2})^{-1}\Im\omega(b_0), \quad d\in  B(\omega(b_0),2r).
$$


Thus, for any $N\in\mathbb N$, we have, by an iteration of \eqref{strict},
\begin{eqnarray}
\lefteqn{\left\|(\Im h_0^{\circ N}(a))^{-\frac12}(h_0^{\circ N}(a)-\omega(b_0))
(\Im\omega(b_0))^{-\frac12}\right\|^2}\nonumber\\
& = & \left\|(\Im h_0^{\circ N}(a))^{-\frac12}(h_0^{\circ N}(a)-h_0^{\circ N}(\omega(b_0)))
(\Im h_0^{\circ N}(\omega(b_0)))^{-\frac12}\right\|^2\nonumber\\
& \leq & \|(\Im a)^{-\frac12}(a-\omega(b_0))(\Im\omega(b_0))^{-\frac12}\|^2\nonumber\\
& & \mbox{}\times\prod_{j=1}^N(1-\varepsilon_0\|(\Im h_0^{\circ j}(a))^{-1}\|)
(1-\varepsilon_0\|(\Im h_0^{\circ j}(\omega(b_0)))^{-1}\|)\nonumber\\
& = & \|(\Im a)^{-\frac12}(a-\omega(b_0))(\Im\omega(b_0))^{-\frac12}\|^2\nonumber\\
& & \mbox{}\times(1-\varepsilon_0\|(\Im\omega(b_0))^{-1}\|)^N
\prod_{j=1}^N(1-\varepsilon_0\|(\Im h_0^{\circ j}(a))^{-1}\|).\label{iter}
\end{eqnarray}
Letting $N$ go to infinity sends $(1-\varepsilon_0\|(\Im\omega(b_0))^{-1}\|)^N$ to 
zero, so that 
$$
\lim_{N\to\infty}\left\|(\Im h_0^{\circ N}(a))^{-\frac12}(h_0^{\circ N}(a)-\omega(b_0))
(\Im\omega(b_0))^{-\frac12}\right\|^2=0.
$$
Recall that $x^{-1}\ge\frac{1}{\|x\|}$ for any positive operator $x$. Since 
$$
({2+4r^2})^{-1}\Im\omega(b_0)\leq\Im h_0^{\circ N}(a)\leq
\|\Re\omega(b_0)\|+\|\Im\omega(b_0)\|\left(4r+1\right)^2,
$$
we have 
\begin{eqnarray*}
\lefteqn{\left\|(\Im h_0^{\circ N}(a))^{-\frac12}(h_0^{\circ N}(a)-\omega(b_0))
(\Im\omega(b_0))^{-\frac12}\right\|^2}\\
 & \geq & 
\frac{\|h_0^{\circ N}(a)-\omega(b_0)\|^2}{\|\Im\omega(b_0)\|\|\Im h_0^{\circ N}(a)\|}\\
& \ge & \frac{\|h_0^{\circ N}(a)-\omega(b_0)\|^2}{\|\Im\omega(b_0)\|
\|\Re\omega(b_0)\|+\|\Im\omega(b_0)\|^2\left(4r+1\right)^2},
\end{eqnarray*}
which allows us to conclude that 
$$
\lim_{N\to\infty}\left\|h_0^{\circ N}(a)-\omega(b_0))\right\|=0,
$$
uniformly on bounded sets which are at strictly positive 
norm-distance from $B\setminus H^+(B)$.

Iterating in relation \eqref{strict-deriv} for $a=c=\omega(b_0)$ 
yields
\begin{eqnarray*}
\frac{\|[h'_0(\omega(b_0))]^{\circ N}(b)\|}{\|\Im\omega(b_0)\|} & \leq & 
\|(\Im\omega(b_0))^{-\frac12}[h'_0(\omega(b_0))]^{\circ N}(b)(\Im\omega(b_0))^{-\frac12}\|\\
& \leq & 
\|(\Im\omega(b_0))^{-\frac12}b(\Im\omega(b_0))^{-\frac12}\|
(1-\varepsilon_0\|(\Im\omega(b_0))^{-1}\|)^N\\
& \leq & \|(\Im\omega(b_0))^{-1}\|(1-\varepsilon_0\|(\Im\omega(b_0))^{-1}\|)^N\|b\|,
\end{eqnarray*}
which implies that 
$$
\|[h'_0(\omega(b_0))]^{\circ N}(b)\|\leq\|\Im\omega(b_0)\|
\|(\Im\omega(b_0))^{-1}\|(1-\varepsilon_0\|(\Im\omega(b_0))^{-1}\|)^N\|b\|
$$
for all $b\in B$, so that 
$$
\|[h'_0(\omega(b_0))]^{\circ N}\|\leq\|\Im\omega(b_0)\|
\|(\Im\omega(b_0))^{-1}\|(1-\varepsilon_0\|(\Im\omega(b_0))^{-1}\|)^N,
$$
the norm of $[h'_0(\omega(b_0))]^{\circ N}$ being the norm
of a bounded linear self-map of the $C^*$-algebra $B$. Thus, for 
$N>\frac{\log(\|\Im\omega(b_0)\|
\|(\Im\omega(b_0))^{-1}\|)}{-\log(1-\varepsilon_0\|(\Im\omega(b_0))^{-1}\|)}$, 
we have $\|[h'_0(\omega(b_0))]^{\circ N}\|<1$. In general, if a linear 
operator $T$ on a Banach space $B$ satisfies $\|T^N\|<1$, we may write
$\sum_{j=0}^{kN-1}T^j=(1+T+\cdots+T^{N-1})+
T^N(1+T+\cdots+T^{N-1})+T^{2N}(1+T+\cdots+T^{N-1})
+\cdots+T^{(k-1)N}(1+T+\cdots+T^{N-1})=
(1+T+\cdots+T^{N-1})\sum_{j=0}^{k-1}(T^N)^j,$
which tends to $(1+T+\cdots+T^{N-1})(1-T^N)^{-1}$ as $k\to\infty$.
Since $N$ is fixed, it follows easily that in fact so does $\sum_{j=0}^{k}T^j$.
A simple algebraic manipulation shows that $(1+T+\cdots+T^{N-1})(1-T^N)^{-1}=
(1-T)^{-1}.$ Thus, $\mathrm{Id}_B-h'_0(\omega(b_0))$ is invertible as
a linear self-map of the Banach space $B$. By the implicit function theorem 
for analytic maps on Banach spaces, it follows that $\omega$ depents 
analytically on $b_0$. This result, together with the properties of
fixed points for noncommutative maps proved in \cite{AKV} allow us to
conclude that $\omega$ is a noncommutative map on a noncommutative
neighbourhood of $b_0$.

All of the above has been established under the assumption that a fixed
point $\omega(b_0)$ exists. We have not proved its existence, though. 
Relation \eqref{strict} would allow us easily to prove such an existence
along the lines of the above proof if we could somehow guarantee
the boundedness of the iterates $\{h_0^{\circ N}(a)\}_{N\in\mathbb N}$
for some given $a\in H^+(B)$. Unfortunately, this does not seem possible
to do in a direct way. Thus, we show the existence of the fixed point
$\omega(b_0)$ by a perturbative argument, most of which is 
contained in the following proposition, which, we believe, might be
of independent interest. Define 
\begin{equation}\label{kazero}
k_0(a)=-h_0(-a^{-1})^{-1},\quad a\in H^+(B). 
\end{equation}
As $\Im h(a)>\varepsilon_01$, it follows that 
$k_0(H^+(B))\subseteq\{w\colon\|w-i(2\varepsilon_0)^{-1}1\|<
(2\varepsilon_0)^{-1}\},$ the noncommutative ball centered at
an imaginary multiple of the identity.
\begin{prop}\label{horro}
For any $a\in H^+(B)\cup\{0\}$, the fixed-point equation $x=a+k_0(x)$
has a unique solution $x(a)$ in $H^+(B)$. $x(a)$ is a noncommutative 
function of $a$ whenever $a\in H^+(B)$, and $x(0_m\oplus0_n)=
x(0_m)\oplus x(0_n)$ for all $m,n\in\mathbb N$.
\end{prop}
\begin{proof}
Note that the set $a+k_0(H^+(B))$ is bounded and bounded 
away from the complement of $H^+(B)$. Thus, the argument
used above allows us to conclude the existence, uniqueness
and analyticity of $x$ on $H^+(B)$. The existence of $x(0)$
in $H^+(B)$ is the only difficult part of the proof. For this,
we shall use some results from \cite{B-CAOT}, specifically
Proposition 3.1, Remark 3.2(2), and Corollary 3.3, together with the definition
of a noncommutative version of horodisks in the noncommutative
upper half-plane (see \cite[Relation (22)]{B-CAOT}). These results 
have been formulated for functions of a slightly different nature, 
but it is very easy to see that all elements of the proofs involved
adapt to bounded functions like $k_0$ which satisfy $k_0(a^*)^*=k_0(a)$.

We claim that $x(H^+(B))=\{m+in\colon m=m^*,n>\Im k_0(m+in)\}$.
Since $x(a)=a+k_0(x(a))$, the inclusion $\subseteq$ is quite obvious.
To prove $\supseteq$, recall that the map $B^{\rm sa}\ni p\mapsto
\Re x(p+iq)\in B^{\rm sa}$ is a bijection for any given $q>0$ (see
\cite[Corollary 3.3]{B-CAOT}). We also know that there exists a 
smooth function $g_q\colon B^{\rm sa}\to\{b\in B\colon\Im b>0\}$
such that $g_q(\Re x(p+iq))=\Im x(p+iq)$. In particular, for any 
$m\in B^{\rm sa}$, there exists a unique $n>0$ such that $g_q(m)
=n$: we have
$$
m+in=x(p+iq)=p+iq+k_0(x(p+iq))=p+iq+k_0(m+in),
$$
so that $p=m-\Re k_0(m+in), q=n-\Im k_0(m+in)$. This proves $\supseteq$.

Since $k_0(H^+(B))$ is bounded, it folows that for any pair 
$m=m^*,n>0$, we have $yn>k_0(m+iyn)$ for all 
sufficiently large $y\in(0,+\infty)$. Thus, we may define
$$
0\leq t_{m,n}=\inf\{y>0\colon sn>\Im k_0(m+isn)\text{ for all }s>y\}.
$$
We argue that for all $s>t_{m,n}$, we have $sn>\Im k_0(m+isn)$, and
for all $0\le s\le t_{m,n}$, we have $sn\not>\Im k_0(m+isn)$. 
The argument is virtually identical to the one in \cite[Lemma 5.8]{BC}
and is based on related works in the case of scalar, classical distributions 
by Biane \cite{Biane} and by Huang \cite{Huang}, so we will only sketch it. We consider the map
$\mathbb C^+\ni z\mapsto\varphi(m+zn-k_0(m+zn))\in\mathbb C$
for an arbitrary state $\varphi$. If $H(z)=\frac{\varphi(m)}{\varphi(n)}
+z-\frac{\varphi(k_0(m+zn))}{\varphi(n)}$, then $\lim_{y\to+\infty}
\frac{H(iy)}{iy}=1$ and $\Im H(z)\leq\Im z$. Then Huang's version
\cite[Section 3]{Huang} of Biane's results \cite[Lemmas 2 and 4]{Biane}
applies to $H$ to guarantee that if $\frac{\Im\varphi(k_0(m+iy_0n))}{\varphi(n)}
\ge y_0$, then $\frac{\Im\varphi(k_0(m+iyn))}{\varphi(n)}\ge y$ for all 
$y\in(0,y_0]$. Since this holds for any state $\varphi$, our claim follows.

Obviously, there are two possibilities: either $t_{m,n}>0$ or $t_{m,n}=0$.
Consider first the case when $t_{m,n}=0$. Pick a state $\varphi$ on $B$
and $n'>0$. We have
$$
\left\|(yn)^{-\frac12}(yn-yn')(yn')^{-\frac12}\right\|^2\ge
\frac{|\varphi(k_0(m+iyn))-\varphi(k_0(m+iyn'))|^2}{\Im\varphi(k_0(m+iyn))\Im\varphi(k_0(m+iyn'))},
$$
which in its own turn implies
$$
\left\|n^{-\frac12}(n-n')(n')^{-\frac12}\right\|^2\ge
\left|\frac{\Im\varphi(k_0(m+iyn))}{\Im\varphi(k_0(m+iyn'))}-
\frac{\Im\varphi(k_0(m+iyn'))}{\Im\varphi(k_0(m+iyn))}\right|^2.
$$
As $t_{m,n}=0$, we have $\frac{\varphi(\Im k_0(m+iyn))}{y}<\varphi(n)$
for all $y>0$, so that necessarily 
\begin{eqnarray*}
\lefteqn{0\leq\liminf_{y\to0}\frac{\Im\varphi(k_0(m+iyn'))}{y}}\\
& \leq & 
\frac{\varphi(n)}{2}\left(2+\left\|n^{-\frac12}(n-n')(n')^{-\frac12}\right\|^2
+\sqrt{\left(2+\left\|n^{-\frac12}(n-n')(n')^{-\frac12}\right\|^2\right)^2-4}\right).
\end{eqnarray*}
As this holds for any $n'>0$ and any state $\varphi$ on $B$,
we conclude that $k_0$ satisfies the hypotheses of \cite[Theorem 2.3]{JLMS}.
Thus, if there exists a pair $m=m^*,n>0$ such that $t_{m,n}=0$, then for any $n'>0$,
$$
\lim_{y\to0}k_0(m+iyn')=\alpha=\alpha^*
$$
exists in the norm topology. However, observe that since 
$k_0(H^+(B))\subseteq\{w\colon\|w-i(2\varepsilon_0)^{-1}1\|<
(2\varepsilon_0)^{-1}\},$ and the limit is in norm, we must have 
$\alpha=0$.

Now consider the case when $t_{m,n}>0$. As seen above, for any $y>t_{m,n}$,
there exist $p_y=m-\Re k_0(m+iyn),q_y=yn-\Im k_0(m+iyn)$ such that
$x(p_y+iq_y)=p_y+iq_y+k_0(x(p_y+iq_y))=p_y+iq_y+k_0(m+iyn)$ (in particular, $q_y>0$).
This provides the expression of $\lim_{y\to t_{m,n}}x(p_y+iq_y)=m+it_{m,n}n\in H^+(B)$.
Simple continuity guarantees that $x(p_{t_{m,n}}+iq_{t_{m,n}})=
p_{t_{m,n}}+iq_{t_{m,n}}+k_0(x(p_{t_{m,n}}+iq_{t_{m,n}}))$. Thus, 
$H^+(B)\ni w\mapsto p_{t_{m,n}}+iq_{t_{m,n}}+k_0(w)\in H^+(B)$ 
has a fixed point in $H^+(B)$. Since the range of this map is bounded
in the unbounded set $H^+(B)$, the fixed point is necessarily unique 
and attracting (indeed, one can apply the argument from the first part of the proof 
of Theorem \ref{semigr}, for ex., to the map 
$\{w+p_{t_{m,n}}+iq_{t_{m,n}}\colon\|w-i\varepsilon_0^{-1}1\|<
\varepsilon_0^{-1}\}\ni w\mapsto p_{t_{m,n}}+iq_{t_{m,n}}+k_0(w)\in
\{w\colon\|w-i(2\varepsilon_0)^{-1}1\|<(2\varepsilon_0)^{-1}\}$
to conclude uniqueness and norm-convergence of iterates to the
fixed point, or one can appeal to Proposition \ref{ka}). Thus, $x$ extends to a norm-neighbourhood 
of $p_{t_{m,n}}+iq_{t_{m,n}}$.

To summarize: either $t_{m,n}=0$, and then, by an application of \cite[Theorem 2.3]{JLMS},
$k_0$ has a Julia-Carath\'eodory derivative at $m$, and $\lim_{y\to0}k_0(m+iyn')=0$ in 
norm for all $n'>0$, or $t_{m,n}>0$, and then $x$ extends analytically around
$p_{t_{m,n}}+iq_{t_{m,n}}=m-\Re k_0(m+it_{m,n}n)+i
(t_{m,n}n-\Im k_0(m+it_{m,n}n))$. We apply this to $m=0$. 
Assume towards contradiction that $t_{0,n}=0$ for some $n>0$.
Recall from \cite[Relation (22)]{B-CAOT} the definition of the 
pseudo-horodisks at zero in ``direction'' $n>0$: 
\begin{eqnarray*}
\mathcal H(0,n) & = & \{w\in H^+(B)\colon(w-0)^*(\Im w)^{-1}(w-0)\leq n\}\\
& = & \{w\in H^+(B)\colon n^{-1/2}\Im wn^{-1/2}+n^{-1/2}\Re w(\Im w)^{-1}\Re wn^{-1/2}\leq1\},
\end{eqnarray*}
and 
\begin{eqnarray*}
\mathring{\mathcal H}(0,n) & = & \{w\in H^+(B)\colon(w-0)^*(\Im w)^{-1}(w-0)<n\}\\
& = & \{w\in H^+(B)\colon n^{-1/2}\Im wn^{-1/2}+n^{-1/2}\Re w(\Im w)^{-1}\Re wn^{-1/2}<1\}.
\end{eqnarray*}
Note that the only selfadjoint element in $\mathcal H(0,n)$ is zero. Indeed,
by definition, if $w\in\mathcal H(0,n),$ then $n\ge\Re w(\Im w)^{-1}\Re w+\Im w$, so that if
$\|\Im w\|\to0$, then necessarily $\|\Re w\|\to0$ (in fact one can easily obtain the estimate
$\|\Im w\|\ge\|\Im w\|(n-\Im w)>\|\Im w\|\Re w(\Im w)^{-1}\Re w\ge(\Re w)^2$). Consider
$B(iyn,y^{-1/2}),y>0$, with $B$ defined in relation \eqref{pseudoball}. We have:
$$
\mathring{\mathcal H}(0,n)\subseteq\bigcap_{0<t<1}\bigcup_{0<y<t}B(iyn,y^{-1/2})\subseteq
{\mathcal H}(0,n).
$$
This has been shown in \cite{B-CAOT}, but we will provide a sketch of the proof below.
Thus, assume towards contradiction that $a\in\mathring{\mathcal H}(0,n)$, but 
$a\not\in\bigcap_{0<t<1}\bigcup_{0<y<t}B(iyn,y^{-1/2})$. Then there exist
a $t_0\in(0,1)$ such that $a\not\in B(iyn,y^{-1/2})$ for any $y\in(0,t_0)$. That is,
$(a-iyn)^*(\Im a)^{-1}(a-iyn)\not\leq n$ for all $y\in(0,t_0)$. At the same time,
there exists an $\epsilon_{a,n}\in(0,+\infty)$ such that $a^*(\Im a)^{-1}a\leq n-
\epsilon_{a,n}\cdot1$. However, $(a-iyn)^*(\Im a)^{-1}(a-iyn)=a^*(\Im a)^{-1}\!a
+y(in(\Im a)^{-1}\!a\!-ia^*\!(\Im a)^{-1}n+yn(\Im a)^{-1}n)\leq a^*\!(\Im a)^{-1}a
+y(2\|n\|\|(\Im a)^{-1}\|\|a\|+y\|n\|^2\|(\Im a)^{-1}\|)<a^*(\Im a)^{-1}a+
\epsilon_{a,n}\cdot1\leq n$ for all $y\in(0,\|n\|\|(\Im a)^{-1}\|(\sqrt{\|a\|^2+\epsilon_{a,n}}
-\|a\|))$. This is a contradiction. Thus the first inclusion holds.
The second inclusion is equally simple: $a\in B(iy_jn,y_j^{-1/2})$ for some sequence
$y_j$ decreasing to zero is equivalent to 
$a^*(\Im a)^{-1}a+y_j(in(\Im a)^{-1}a-ia^*(\Im a)^{-1}n+y_jn(\Im a)^{-1}n)
\leq n$ for all $j\in\mathbb N$, which implies $a^*(\Im a)^{-1}a\leq n$, that is,
$a\in\mathcal H(0,n)$. We have
\begin{equation}\label{something}
k_0\left(\mathring{\mathcal H}(0,n)\right)\subseteq
k_0\left(\bigcap_{0<t<1}\bigcup_{0<y<t}B(iyn,y^{-1/2})\right)
\end{equation}
$$
\subseteq\bigcap_{0<t<1}k_0\left(\bigcup_{0<y<t}B(iyn,y^{-1/2})\right)
=\bigcap_{0<t<1}\bigcup_{0<y<t}k_0(B(iyn,y^{-1/2})).
$$
Recall that $iyn=x(p_y+iq_y)=p_y+iq_y+k_0(x(p_y+iq_y))=p_y+iq_y+k_0(iyn)$,
that is, $iyn=x(p_y+iq_y)$ is a fixed point for $w\mapsto p_y+iq_y+k_0(w)$.
Thus, 
\begin{equation}\label{someotherthing}
k_0(B(yn,y^{-1/2}))\subseteq B(iyn,y^{-1/2})-(p_y+iq_y).
\end{equation}
We have seen that $p_y=-\Re k_0(iyn)$ tends to zero in norm (in fact 
$\|p_y/y\|$ is bounded as $y\to0$), and $q_y=yn-\Im k_0(iyn)\to0$ in norm
as $y\to0$ (in fact, $\|q_y/y\|$ is uniformly bounded for $y\in(0,1)$).

We claim that 
$$
\bigcap_{0<t<1}\bigcup_{0<y<t}(B(iyn,y^{-1/2})-(p_y+iq_y))\subseteq\mathcal H(0,n).
$$
Assume that is not the case. Then there exists 
$$
a_0\in\bigcap_{0<t<1}\bigcup_{0<y<t}(B(iyn,y^{-1/2})-(p_y+iq_y))\setminus\mathcal H(0,n),
$$
that is, for all $t\in(0,1)$, there exists $0<y<t$ such that $a_0\in B(iyn,y^{-1/2})-(p_y+iq_y)$,
and yet $a^*_0(\Im a_0)^{-1}a_0\not\leq n$. So (representing $B$ on a Hilbert space
via the GNS construction), there exists a unit vector $\xi$ and a number $\eta>0$
such that 
\begin{equation}\label{et}
\langle(\Im a_0)^{-1}a_0\xi,a_0\xi\rangle>\langle n\xi,\xi\rangle+\eta
\end{equation}
and $a_0=\alpha_0-p_y-iq_y$, where $(\alpha_0-iyn)^*(\Im\alpha_0)^{-1}(\alpha_0-iyn)
\leq n$. Thus, we found a sequence $\{y_j\}_{j\in\mathbb N}$ decreasing to zero
such that 
$$
(a_0+p_{y_j}+iq_{y_j}-iy_jn)^*(\Im a_0+q_{y_j})^{-1}(a_0+p_{y_j}+iq_{y_j}-iy_jn)\leq n;
$$
in particular,
$$
\left\langle (\Im a_0+q_{y_j})^{-1}(a_0+p_{y_j}+iq_{y_j}-iy_jn)\xi,
(a_0+p_{y_j}+iq_{y_j}-iy_jn)\xi\right\rangle\leq\langle n\xi,\xi\rangle.
$$
Expanding, we obtain
\begin{eqnarray}
\lefteqn{\left\langle (\Im a_0+q_{y_j})^{-1}a_0\xi,a_0\xi\right\rangle+2\Re\left\langle
(\Im a_0+q_{y_j})^{-1}a_0\xi,(p_{y_j}+iq_{y_j}-iy_jn)\xi\right\rangle}\nonumber\\
& & \mbox{}+
\left\langle(\Im a_0+q_{y_j})^{-1}(p_{y_j}+iq_{y_j}-iy_jn)\xi,(p_{y_j}+iq_{y_j}-iy_jn)\xi\right\rangle
\nonumber\\
& \leq & \langle n\xi,\xi\rangle.\label{lang}
\end{eqnarray}
From \eqref{et} and \eqref{lang} together we obtain (by cancelling $\langle n\xi,\xi\rangle$)
\begin{eqnarray}
\lefteqn{\left\langle(\Im a_0)^{-1}a_0\xi,a_0\xi\right\rangle-\eta}\nonumber\\
& > & \left\langle (\Im a_0+q_{y_j})^{-1}a_0\xi,a_0\xi\right\rangle+2\Re\left\langle
(\Im a_0+q_{y_j})^{-1}a_0\xi,(p_{y_j}+iq_{y_j}-iy_jn)\xi\right\rangle\nonumber\\
& & \mbox{}+
\left\langle(\Im a_0+q_{y_j})^{-1}(p_{y_j}+iq_{y_j}-iy_jn)\xi,(p_{y_j}+iq_{y_j}-iy_jn)\xi\right\rangle.
\nonumber
\end{eqnarray}
We re-arrange this relation to get 
\begin{eqnarray}
\lefteqn{\left\langle(\Im a_0)^{-1}a_0\xi,a_0\xi\right\rangle-
\left\langle (\Im a_0+q_{y_j})^{-1}a_0\xi,a_0\xi\right\rangle-\eta}\nonumber\\
& = & \left\langle (\Im a_0+q_{y_j})^{-1}q_{y_j}(\Im a_0)^{-1}a_0\xi,a_0\xi\right\rangle-\eta\nonumber\\
& > & 2\Re\left\langle
(\Im a_0+q_{y_j})^{-1}a_0\xi,(p_{y_j}+iq_{y_j}-iy_jn)\xi\right\rangle\nonumber\\
& & \mbox{}+\left\langle(\Im a_0+q_{y_j})^{-1}(p_{y_j}+iq_{y_j}-iy_jn)\xi,(p_{y_j}+iq_{y_j}-iy_jn)\xi
\right\rangle.
\nonumber
\end{eqnarray}
Since $\lim_{j\to\infty}\|q_{y_j}\|=\lim_{j\to\infty}\|p_{y_j}\|=\lim_{j\to\infty}y_j=0$, when we take
limit as $j\to\infty$ in the above inequality, we obtain $-\eta>0$, a contradiction. Thus,
$$
\bigcap_{0<t<1}\bigcup_{0<y<t}(B(iyn,y^{-1/2})-(p_y+iq_y))\subseteq\mathcal H(0,n).
$$
Combining this with relations \eqref{something} and \eqref{someotherthing},
we obtain 
$$
k_0(\mathcal H(0,n))\subseteq\mathcal H(0,n).
$$
Quite trivially, 
\begin{eqnarray*}
& & a\in\mathcal H(0,n)\iff n^{-1/2}a^*(\Im a)^{-1}an^{-1/2}\le1\\
& & \iff n^{-1/2}\Im an^{-1/2}+n^{-1/2}\Re a(\Im a)^{-1}\Re an^{-1/2}\le 1\\
& & \iff (\Im a+\Re a(\Im a)^{-1}\Re a)^{-1}\ge n^{-1}\\
& & \iff \Im(-a^{-1})\ge n^{-1}.
\end{eqnarray*}
Thus, $\mathcal H(0,n)$ is mapped bijectively (and as a noncommutative set)
onto the set $\{a\in H^+(B)\colon \Im a\ge n^{-1}\}$ by the correspondence
$a\mapsto -a^{-1}.$ By the definition of $k_0$ (see \eqref{kazero}), it follows that
$h(\{a\in H^+(B)\colon \Im a\ge n^{-1}\})\subseteq\{a\in H^+(B)\colon \Im a\ge n^{-1}\}$.

However, our hypothesis on $h$ states that $\lim_{y\to\infty}\frac{\langle\Im h(\Re a+iy\Im a)\xi,\xi
\rangle}{y}=0$ for any $a\in H^+(B)$ and unit vector $\xi$. That is, given $u=u^*,v>0$, there 
exists an $y_{u,v,\xi}>0$ depending on $u$, $v$ and $\xi$ such that $\langle\Im h(u+iyv)\xi,\xi\rangle<y\langle
v\xi,\xi\rangle/2$ whenever $y\ge y_{u,v,\xi}$. But, choosing $n^{-1}=yv$, we obtain $\Im h(u+iyv)\ge\Im(u+iyv)=yv$, a contradiction. This concludes the proof of our 
proposition.
\end{proof}
By Proposition \ref{horro}, $a\mapsto-(b_0+h(-a^{-1}))^{-1}$ has an attracting fixed point
in $H^+(B),$ call it $a(b_0)$. Then $\omega(b_0):=-a(b_0)^{-1}$ is the fixed point of $w\mapsto b_0+h(w)$. 
The results of \cite{AKV} allow us to conclude the proof of Theorem \ref{semigr}.
\end{proof}

In order to argue that Theorem \ref{semigr} solves the problem of defining free convolution 
powers of distributions of unbounded selfadjoint random variables in the context of Definition \ref{distri} and Hypotheses (H1), (H2), let
us show that if $X=X^*\in M$, then $h_X(b)=E[(b-X)^{-1}]^{-1}-b$ satisfies the
hypothesis of Theorem \ref{semigr}. Fix $b=u+iv$, $u=u^*,v>0$. 
Then 
\begin{eqnarray*}
h_X(u+zv) & = & E\left[(u-X+zv)^{-1}\right]^{-1}-u-zv\\
& = & v^{1/2}E\left[\left(z+v^{-1/2}(u-X)v^{-1/2}\right)^{-1}\right]^{-1}v^{1/2}-u-zv\\
& = &  v^\frac12\left\{E\left[\left(z+v^{-\frac12}(u-X)v^{-\frac12}\right)^{-1}\right]^{-1}-z-
v^{-1/2}uv^{-1/2}\right\} v^\frac12.
\end{eqnarray*}
We argue that $h_X$ satisfies the hypothesis of Theorem \ref{semigr}. This
means (via a polarization argument) to show that 
$\lim_{y\to+\infty}\frac{\langle\Im h_X(u+iyv)\xi,\xi\rangle}{y}=0$. Denote $Y=v^{-\frac12}(u-X)v^{-\frac12}$.
Since $\Im (m+in)^{-1}=-(mn^{-1}m+n)^{-1},$ $\Re(m+in)^{-1}
=n^{-1}m(mn^{-1}m+n)^{-1}$, we have
$$
\Im E\left[\frac{1}{z-Y}\right]=-E\left[\frac{y}{y^2+(x-Y)^2}\right]<0,\quad
\Re E\left[\frac{1}{z-Y}\right]=E\left[\frac{x-Y}{y^2+(x-Y)^2}\right],
$$
where $z=x+iy$. Thus,
\begin{eqnarray*}
\lefteqn{\Im E\left[\left(z-Y\right)^{-1}\right]^{-1}=}\\
& & \left\{E\left[\frac{x-Y}{y^2+(x-Y)^2}\right]E\left[\frac{y}{y^2+(x-Y)^2}\right]^{-1}
E\left[\frac{x-Y}{y^2+(x-Y)^2}\right]\right.\\
& & \left.\mbox{}+E\left[\frac{y}{y^2+(x-Y)^2}\right]\right\}^{-1}\\
& \leq & E\left[\frac{y}{y^2+(x-Y)^2}\right]^{-1},
\end{eqnarray*}
which makes 
$$
\Im E\left[\left(z-Y\right)^{-1}\right]^{-1}-y
\leq y\left(E\left[\frac{y^2}{y^2+(x-Y)^2}\right]^{-1}-1\right).
$$
Dividing by $y$ provides us with the majorizing term $E\left[\frac{y^2}{y^2+(x-Y)^2}\right]^{-1}-1$.
This, as a function of $y$, is decreasing, as it can be seen by taking the (classical) derivative
with respect to $y$:
\begin{eqnarray*}
\lefteqn{
\partial_yE\left[\frac{y^2}{y^2+(x-Y)^2}\right]^{-1}=}\\
& & \mbox{}-E\left[\frac{y^2}{y^2+(x-Y)^2}\right]^{-1}
E\left[\frac{2y(x-Y)^2}{(y^2+(x-Y)^2)^2}\right]E\left[\frac{y^2}{y^2+(x-Y)^2}\right]^{-1}\leq0.
\end{eqnarray*}
Thus, $E\left[\frac{y^2}{y^2+(x-Y)^2}\right]^{-1}-1$ is a decreasing function of $y$. 
If it does not decrease to zero, then there exists a positive operator $0\neq c\ge0$
which belongs to the von Neumann algebra $B$
such that $\lim_{y\to\infty}E\left[\frac{y^2}{y^2+(x-Y)^2}\right]^{-1}=1+c$
in the strong operator topology. 
Multiplying left and right by $(1+c)^{-1/2}$ allows us to conclude that
$(1+c)^{1/2}E\left[\left(z-Y\right)^{-1}\right](1+c)^{1/2}$
belongs to the norm-ball of center $-i/(2y)$ and radius $1/(2y)$. Taking the imaginary part
and multiplying by $y$ yields
$$
\lim_{y\to\infty}(1+c)^{1/2}E\left[\frac{y^2}{y^2+(x-Y)^2}\right](1+c)^{1/2}=1
$$
in the so-topology. Thus\footnote{We use here that if $0<b_j^{-1}$ decreases to 1,
then $0<b_j$ increases to 1; this can be seen by evaluating 
$\langle(1-b_j)^{1/2}\xi,\xi\rangle^2=\langle b_j^{1/2}(b_j^{-1}-1)^{1/2}\xi,\xi\rangle^2
\leq\langle b_j\xi,\xi\rangle\langle(b_j^{-1}-1)\xi,\xi\rangle$.}
$$
\lim_{y\to\infty}E\left[\frac{y^2}{y^2+(x-Y)^2}\right]=(1+c)^{-1}.
$$
Composing this with any wo-continuous state $\varphi$ on the 
algebra of $B$ provides us with a state $\varphi\circ E$ on $M$
with respect to which the distribution of $Y$ is not a probability, contradicting (H1).

\begin{remark}\label{li}
\begin{trivlist}
\item[\ (1)]
We provide here a very brief sketch of a selfadjoint version of the classical realization-linearization trick
that allows us to conclude that, under Hypotheses (H1), (H2), the noncommutative map 
$$
\{\alpha\in\mathbb C^{n\times n}\!\colon\!\alpha=\alpha^*\!\}\times H^+_n(B)\ni(\alpha,b)\mapsto 
G_{\alpha\otimes X}(b)\!=\!({\rm Id}_{\mathbb C^{n\times n}}\otimes E)\!\left[(b-\alpha\otimes X)^{-1}\right]\!, n\in\mathbb N,
$$
determines the distribution of $X$ as defined in Definition \ref{distri}.
 Direct computation using the Schur complement shows that the matrix
$$
\begin{bmatrix}
z & 0 &  \cdots & 0 & 0 & b_n\\
0 & 0 &  \cdots & 0 & X & -1\\
0 & 0 &  \cdots & b_{n-1} & -1 & 0\\
\vdots &  \vdots &  \reflectbox{$\ddots$} & \vdots & \vdots  & \vdots\\
0 & X &  \cdots & 0 & 0 & 0\\
b_0 & -1  & \cdots & 0 & 0 & 0
\end{bmatrix}
$$
is invertible, and its inverse has $(z-b_0X\cdots b_{n-1}Xb_n)^{-1}$ as its $(1,1)$
 entry. We call this matrix the realization of the monomial $b_0X\cdots b_{n-1}Xb_n$.
We note that its lower right $(2n)\times(2n)$ corner of this matrix is invertible 
regardless of the monomial $b_0X\cdots b_{n-1}Xb_n$.
If $\begin{bmatrix}
z & u_j\\
v_j & Q_j
\end{bmatrix}$ are realizations as above of the monomials $m_j$, $j=1,2$, 
then  $\begin{bmatrix}
z & u_1 & u_2\\
v_1 & Q_1 & 0\\
v_2 & 0 & Q_2
\end{bmatrix}$ is a realization of $m_1+m_2$: its inverse has $(z-m_1-m_2)^{-1}$ 
as its $(1,1)$ entry, and its lower right corner is invertible regardless of the 
choice of $m_1$ and $m_2$. Finally, if $\begin{bmatrix}
z & u\\
v & Q
\end{bmatrix}$ is a realization of the polynomial $P\in B\langle X\rangle$, then $\begin{bmatrix}
z & u & v^*\\
u^* & 0 & Q^*\\
v & Q & 0 
\end{bmatrix}$ is a {\em selfadjoint} realization of the selfadjoint polynomial
 $P+P^*$. Of course, it is quite possible that applying the expectation $E$ 
entrywise to the inverse of such a matrix is impossible (for instance, $E[X]$ 
might not be defined for a given unbounded $X$ affiliated with $M$). However, 
if $Q=Q^*$, then for any number $\epsilon>0$, the matrix $\begin{bmatrix}
z & v^*\\
v & Q+i\epsilon
\end{bmatrix}$ is invertible in the space of matrices over $M$ 
of the corresponding size, and its inverse is bounded in $M$.
We take $\lim_{\epsilon\to0}\left(E\begin{bmatrix}
z & v^*\\
v & Q+i\epsilon
\end{bmatrix}\right)_{1,1}$ to obtain $E\left[(z-P)^{-1}\right]$. 
Composing with any state $\varphi$ on $B$ yields the claimed result.

\item[\ (2)] In view of the above, applying Theorem  \ref{semigr} to 
solve our problem in free probability requires only one more ingredient: 
we must show that if $X=X^*$ satisfies Hypotheses (H1) and (H2), then 
so does $\alpha\otimes X$ for any $\alpha=\alpha^*\in\mathbb C^{n\times n}$, 
$n\in\mathbb N$. It is quite easy to show that $\alpha\otimes X$ satisfies (H1): 
elementary matrix arithmetics rules guarantee that $B^{n\times n}$ and 
$\alpha\otimes X$ form a ${}^*$-algebra of possibly unbounded 
operators affiliated to $M^{n\times n}$.

We show next that 
$({\rm Id}_{\mathbb C^{n\times n}}\otimes E)\left[\Im(b-\alpha\otimes X)^{-1}\right]<0$
for any $b\in H^+_n(B)$. This is again quite straightforward. 
First, since $\alpha$ is a scalar selfadjoint matrix, it can be diagonalized 
by conjugating with a unitary matrix $U\in\mathbb C^{n\times n}$:
$$
U^*({\rm Id}_{\mathbb C^{n\times n}}\otimes E)\left[\Im(b\!-\!\alpha\otimes X)^{-1}\right]U\!=\!
({\rm Id}_{\mathbb C^{n\times n}}\otimes E)\left[\Im(U^*\!bU\!-\!\mathrm{diag}(\alpha_1 X,\dots,\alpha_nX))^{-1}\right],
$$
where $\alpha_1,\dots,\alpha_n$ are the real eigenvalues of $\alpha$. Thus, it is enough to prove the statement for diagonal matrices $\alpha$.
Second, by a simple maximum principle argument, if this inequality happens at $iv$ for some $v>0$, then it must happen everywhere on $H^+_n(B)$.
Thus, it is enough to prove the statement for $b=iI_n\otimes1$. But in this case, the statement is immediate: 
$({\rm Id}_{\mathbb C^{n\times n}}\otimes E)\left[\Im(iI_n\otimes1-\mathrm{diag}(\alpha_1 X,\dots,\alpha_nX))^{-1}\right]
=\textrm{diag}(E\left[(i1-\alpha_1X)^{-1}\right],\dots,E\left[(i1-\alpha_nX)^{-1}\right])<0$, according to (H2).
\end{trivlist}
\end{remark}

Denote $({\rm Id}_{\mathbb C^{n\times n}}\otimes E)\left[(b-\alpha\otimes X)^{-1}\right]$, $\alpha=\alpha^*\in\mathbb C^{n\times n}$, $n\in\mathbb N$, by $G_X$,
and the same for $F$ and $h$.

\begin{cor}
Under hypotheses {\rm (H1)} and {\rm (H2)}, $\mu_X^{\boxplus\rho}$ is well-defined for all 
cp maps $\rho\colon B\to B$ such that $\rho-{\rm Id}_B$ is still cp.
\end{cor}
\begin{proof}
Apply Theorem \ref{semigr} to $h(w)=(\rho-\mathrm{Id}_B)h_{X}(w)$.
\end{proof}

Let us conclude with a brief comment on the Nevanlinna representation of 
$h_X$. If $X\in M$, results of \cite{PV} guarantee the
existence of an extension $B\langle\mathcal X\rangle$ of $B$ in which there exists a bounded 
selfadjoint element $\mathcal X,$ and of a completely positive map
$\rho\colon B\langle\mathcal X\rangle\to B$ such that 
$h_X(b)=-E[X]+\rho\left[(\mathcal X-b)^{-1}\right]$, $b\in H^+(B)$.
As in the case of the classical Nevanlinna representation, for unbounded
operators $X$, the cp map $\rho$ is not the appropriate completely positive map anymore. We
define $\eta\colon B\langle\mathcal X\rangle\to B$ by $\eta[a]=
\rho\left[(\mathcal X-i)^{-1}a(\mathcal X+i)^{-1}\right].$ The 
correspondence becomes now
$$
h_X(b)=\Re h_X(i)+\eta\left[(\mathcal X-b)^{-1}+b+b(\mathcal X-b)^{-1}b\right],\quad\Im b>0.
$$
Observe that indeed $\Im h(i)=\eta[i]$. Rewriting this map as
\begin{equation}\label{GenNev}
h_X(b)=
\Re h_X(i)+\eta\left[(\mathcal X-b)^{-1}-\mathcal X+\mathcal X(\mathcal X-b)^{-1}\mathcal X\right]
\end{equation}
makes it clear that it maps $H^+(B)$ in its closure. It would be interesting to 
determine whether this expression is equivalent in some sense to the one
obtained in \cite{PPT} for maps from the nc unit ball of a $C^*$-algebra to its 
right half-plane.


\end{document}